\newcommand{\mo}{\mathopen}
\newcommand{\mc}{\mathclose}
\newtheorem{theorem}{Theorem}[section]
\newtheorem{proposition}[theorem]{Proposition}
\newtheorem{lemma}[theorem]{Lemma}
\newtheorem{corollary}[theorem]{Corollary}
\theoremstyle{definition}
\newtheorem{definition}[theorem]{Definition}
\newtheorem{remark}[theorem]{Remark}
\newcommand{\C}{{\mathbb{C}}}
\newcommand{\N}{{\mathbb{N}}}
\newcommand{\R}{{\mathbb{R}}}
\newcommand{\Z}{{\mathbb{Z}}}
\newcommand{\HP}{{\mathbb{H}}}
\newcommand{\PP}{{\mathbb{P}}}
\newcommand{\TT}{{\mathbb{T}}}
\newcommand{\ak}{{\bf k}}
\renewcommand{\to}[1][]{\xrightarrow[]{#1}}
\newcommand{\from}[1][]{\xleftarrow[]{#1}}
\newcommand{\isoto}[1][]{\xrightarrow[#1]%
{{\raisebox{-.6ex}[0ex][-.6ex]{$\mspace{1mu}\sim\mspace{2mu}$}}}}
\newcommand{\isofrom}[1][]{\xleftarrow[#1]%
{{\raisebox{-.6ex}[0ex][-.6ex]{$\mspace{1mu}\sim\mspace{2mu}$}}}}
\newcommand{\RR}{\mathrm{R}}
\newcommand{\Hom}{\operatorname{Hom}}
\newcommand{\RHom}{\operatorname{RHom}}
\renewcommand{\hom}{\operatorname{\mathcal{H}om}}
\newcommand{\rhom}{\operatorname{R\mathcal{H}om}}
\newcommand{\rsect}{\operatorname{R\Gamma}}
\newcommand{\Der}{\operatorname{\mathsf{D}}}
\newcommand{\supp}{\operatorname{supp}}
\newcommand{\msupp}{\operatorname{SS}}
\newcommand{\dmsupp}{\operatorname{\dot{SS}}}
\newcommand{\dT}{\dot{T}}
\newcommand{\id}{\mathrm{id}}
\newcommand{\ol}{\overline}
\newcommand{\sui}{\medskip\noindent}
\newcommand{\suiv}[1]{\smallskip\noindent{\rm #1}}
\newcommand{\gp}{\mathsf{G}}
\newcommand{\sgp}{\mathsf{H}}
\newcommand{\calF}{\mathcal{F}}
\newcommand{\calT}{\mathcal{T}}
\newcommand{\lc}{\mathrm{lc}}
\newcommand{\Dlc}{\operatorname{\mathsf{D}}_{\mathrm{lc}}}
\title{Viterbo's spectral bound conjecture for homogeneous spaces}
\author{St\'ephane Guillermou}
\thanks{ SG: UMR 5582 du CNRS Institut Fourier, Université Grenoble Alpes,  CS 40700
 38058 Grenoble cedex 9 - France}
\author{Nicolas Vichery}
\thanks{NV:  UMR 5208 du CNRS Institut Camille Jordan, Universit\'e Claude Bernard Lyon 1,
 43 boulevard du 11 novembre 1918, 69622 Villeurbanne Cedex - France}
\thanks{Both authors acknowledge support from ANR MICROLOCAL (ANR-15-CE40-0007) and ANR COSY
  (ANR-21-CE40-0002). Part of this article has been written during a stay of Nicolas Vichery at
  Institut Fourier.}
\begin{document}
\maketitle

\begin{abstract}
  We prove a conjecture of Viterbo about the spectral distance on the space of compact exact
  Lagrangian submanifolds of a cotangent bundle $T^*M$ in the case where $M$ is a compact
  homogeneous space: if such a Lagrangian submanifold is contained in the unit ball bundle of
  $T^*M$, its spectral distance to the zero section is uniformly bounded.  This also holds for some
  immersed Lagrangian submanifolds if we take into account the length of the maximal Reeb chord.  
\end{abstract}

\section{Introduction}

In 2007, Viterbo conjectured  the following result about the spectral distance on exact Lagrangian submanifolds which he defined in 1982 \cite{Vit92}  (this is mentioned in \cite{Vit08}). Every Lagrangian submanifold
of  $T^*\TT^n$,
Hamiltonian isotopic to the zero section $0_{\TT^n}$, and included into the unit codisc bundle of $T^*\TT^n$ is at a  distance uniformly bounded from $0_{\TT^n}$.
This could be understood as a tentative to find non trivial compact sets for the spectral distance or at least bounded sets, which is a completely open question.
The conjecture has been since generalized for every cotangent bundle of compact manifold and not restricted to Lagrangian isotopic to the zero section. Note that a priori the spectral norm depends on the coefficient ring. We will restrict ourselves to fields.

The main set of applications are in relation with the existence and properties of quasi-morphisms on the Hamiltonian group of cotangent bundles and with Hamiltonian dynamic. It has been anticipated in symplectic homogenization \cite{Vit08}, \cite{MVZ}.

Biran and Cornea \cite{BR} have proved some bound on this distance but depending on the boundary depth of the Lagrangian with any fiber. Conversely, these authors have shown that this boundary depth is bounded in case the Viterbo conjecture holds.

Shelukhin in 2018  \cite{S18} and 2019 \cite{S19} solves the conjecture in the following cases, with two different approaches: 

\begin{enumerate}
    \item The case of $M= \R\PP^n$, $\C\PP^n$, $\HP\PP^n$, $\mathbb{S}^n$ for the field $\mathbb{F}_2$ which has been obtained by SFT techniques applied to closed symplectic manifold.
    \item The case of $M$ ``string point invertible'' which depends also of the chosen field and
      contains compact Lie groups according to previous computations of Menichi. This notion
      introduced by Shelukhin is related to the possibility to construct the fundamental class of
      the underlying manifold via cohomological classes of the free loop space.
\end{enumerate}

During the preparation of this paper we learned that Viterbo~\cite{V22} had another proof of the
conjecture for manifolds satisfying a cohomological condition, whose typical example is a
homogeneous space.

Dimitroglou Rizell proved in  \cite{D} that the conjecture is false in the case of immersed Lagrangian by counterexample in $T^*\mathbb{S}^1$.

\medskip

Here, we prove the conjecture in the case of compact homogeneous spaces.  First, in the case of
compact Lie groups we give a statement valid for any coefficient field in Theorem~\ref{thm:groupe}.
In the case of a general compact homogeneous space we prove Corollary~\ref{cor:esphomogsimplconnexe}
with some restriction on the characteristic of the field and with a weaker bound for the spectral
distance.  Moreover we extend the initial conjecture in two directions: (1) the proof holds for any
finite ranked local system over the Lagrangian; (2) we take into account the counterexample of
Dimitroglou Rizell by giving a bound depending on the length of the maximal Reeb chord in the case
of an immersed Lagrangian for which there exists a ``quantization'' (a sheaf whose microsupport is
our Lagrangian -- such a sheaf exists for a Legendrian deformation of the zero section, see
Corollary~\ref{cor:defLegendrienne}).  A byproduct of the proof is a bound for the boundary depth of
the Lagrangian with the zero section.

\subsection*{Idea of proof}
Let $M$ be a compact manifold and $\Lambda \subset T^*M$ an exact compact Lagrangian submanifold.
We use the microlocal theory of sheaves, which deals with conic subsets of the cotangent bundle, and
we lift $\Lambda$ to a conic Lagrangian submanifold $\Lambda'\subset T^*(M\times \R) \setminus
0_{M\times\R}$. According to \cite{G19}, there exists a sheaf $F$ with $\msupp(F) = \Lambda'$ which
is isomorphic to zero near $M\times\{-\infty\}$ and to $\ak_{M\times\R}$ near $M\times\{+\infty\}$.

Tamarkin translates the filtration of the Floer complex into a sheaf morphism $\tau_c(F) \colon F
\to T_{c*}F$ for $c\geq 0$, where $T_c$ is the translation by $c$ along $\R$. He also introduces
a twisted $\hom$-sheaf, $\hom^*$, with the property that $\RR a_* \hom^*(F,G)$ encodes the
filtration on the Floer cohomology, with $a$ the projection to $\R$.

We first use a computation of $\hom^*$ on sheaves over $\R$. We deduce that $F' = \hom^*(\RR a_*F,
\RR a_*F) \otimes \ak_{[0,+\infty[}$ contains the information on the Viterbo norm,
$\gamma(\Lambda,0_M)$, and the boundary depth (see Lemma~\ref{lem:c_et_v}).  More precisely
$\tau_c(F')$ is non zero as long as $c<\gamma(\Lambda,0_M)$.

We use the fact that $M=\gp$ is a group by seeing $\hom^*(\RR a_*F_1, \RR a_*F_2)$, for any two
sheaves $F_1,F_2$, as the ``average'' of $\RR a_*\hom^*(F_1, \mu_g^! F_2)$ for $g\in \gp$. More
precisely we define
$$
\hom^{*,\gp}(F_1,F_2) = \RR p_{1*}\rhom(p_2^{-1}F, \mu^!F') ,
$$
where $p_1,p_2, \mu \colon (\gp\times\R)^2 \to \gp\times\R $ are the projections and the group
operation, and we have
$\RR a_* \hom^{*,\gp}(F_1,F_2) \simeq \hom^*(\RR a_*F_1, \RR a_*F_2)$.  Now, when $F_1 = F_2
=F'$, we can see that the morphism $\tau_c(F')$ we are interested in is the image by $a_*$ of the
morphism $\tau_c(F'')$ with $F'' = \hom^{*,\gp}(F_1,F_2)\otimes \ak_{M\times[0,+\infty[}$.  We can
see that $F''|_{\{e\} \times \R}$ vanishes outside $[0,l_{max}]$ where $l_{max}$ is the length of
the maximal Reeb chord of $\Lambda$ (see Lemma~\ref{lem:annulationF2fibree}).

These morphisms $\tau_c$ can be interpreted as sections of sheaves and the vanishing of $\tau_c$ can
be translated as the vanishing of some sections over subsets of the form $M\times \mo]-\infty,-c[$.

Now the microlocal theory of sheaves gives the following general propagation argument. If $G$ is a
sheaf on $M\times\R$ with $\msupp(G)$ contained in a cone $\{\tau \geq ||\xi||\}$, then the
restriction map from the cone $\{(x,t)$; $t<-d(x_0,x) < r\}$, where $d$ is the distance and $r$ the
injectivity radius, to the half-line $\{x_0\} \times \mo]-\infty,0[$ is an isomorphism (actually we
need a slightly more technical argument -- see Proposition~\ref{prop:annulation_section} and
Corollary~\ref{cor:annulationtauc}).

We use this argument to prove the vanishing of $\tau_{2r+l_{max}}(F'')$ in a ball of radius $r$
around the identity of $\gp$ and, using the group action, we obtain the same vanishing over any ball
of radius $r$ of $\gp$. By a general d\'evissage argument on a triangulation we extend the vanishing
of $\tau_c(F'')$ on balls to a global vanishing of $\tau_{c\dim \gp}(F'')$ (we have to be careful
that the step from the $k$-skeleton to the $(k+1)$-skeleton requires to change $\tau_{kc}$ into
$\tau_{(k+1)c}$). This proves the Lie group case.

We deduce the case of a homogeneous space as follows.  Let $q \colon \gp\times\R \to
\gp/\sgp\times\R = M\times\R$ be the quotient map.  For $\Lambda \subset T^*M$ we have a natural
inverse image $\Lambda' \subset T^*\gp$. It is easy to see that the spectral norm of $\Lambda'$ is
smaller than the one of $\Lambda$, but they are a priori not equal.  However in the other direction
we have equality: if $F$ is a sheaf on $\gp\times\R$ with microsupport $\Lambda'$, then the spectral
norms of $F$ and $\RR q_*F$ are equal.  Now we can see that any sheaf $G$ on $M\times\R$ with
microsupport $\Lambda$ is a direct summand of some iterated cone between such direct image sheaves
$\RR q_* F$ (it is even enough to take $m$ cones, where $m$ is the dimension of $M$).  We note that
each time we take a cone we need to change some $\tau_{kc}$ into $\tau_{(k+1)c}$ and thus get a
weaker bound.

\subsection*{Acknowledgements}

The authors thank Sylvain Courte and Claude Viterbo for useful conversations during the preparation
of this paper.

\newpage

\subsection*{Notations}
We follow mostly the notations of~\cite{KS90}.

Let $M$ be a manifold.  Our coefficient ring is a field $\ak$.  We denote by
$\Der(\ak_M)$ the derived category of sheaves of $\ak$-vector spaces on $M$.  We
usually work on a product $M\times\R$ and denote by $(t;\tau)$ the coordinates on
$T^*\R$.  The microsupport of a sheaf $F$ on $M$ is written $\msupp(F)$; it is a
closed conic subset of $T^*M$. We set $\dmsupp(F) = \msupp(F) \setminus 0_M$.  We let
$\Der_{\tau\geq 0}(\ak_{M\times\R})$ be the full subcategory of
$\Der(\ak_{M\times\R})$ formed by the $F$ such that
$\msupp(F) \subset T^*M \times \{\tau\geq 0\}$.

We recall the bounds given in Propositions~5.4.4-13-14 of~\cite{KS90}.  Let
$f\colon M\to M'$ and $F,G\in \Der(\ak_M)$, $F' \in \Der(\ak_{M'})$ be given.  Let
$T^*M \from[f_d] M\times_{M'}T^*M' \to[f_\pi] T^*M'$ be the natural maps.  Assuming
respectively (i) $f$ is proper on $\supp(F)$, (ii)
$\dmsupp(F') \cap f_\pi(f_d^{-1}(0_M)) = \emptyset$, (iii)
$\dmsupp(F) \cap \dmsupp(G)^a = \emptyset$ and (iv)
$\dmsupp(F) \cap \dmsupp(G) = \emptyset$, we have
\begin{equation}
  \label{eq:borne-msupp}
  \begin{gathered}
    \msupp(\RR f_*F) \subset f_\pi(f_d^{-1}(\msupp(F))), \hspace{2cm}
    \msupp(f^{-1}F') \subset f_d(f_\pi^{-1}(\msupp(F'))), \\
    \msupp(F\otimes G) \subset \msupp(F) + \msupp(G), \hspace{1cm}
    \msupp(\rhom(F, G)) \subset \msupp(F)^a + \msupp(G).
\end{gathered}
\end{equation}

\section{Reminder on Tamarkin's framework}
\subsection{On Tamarkin's morphism}
In this section we often use the maps
$$
s, q_1, q_2 \colon M\times \R^2 \to M\times \R,
\qquad T_c \colon M\times\R \to M\times\R,
$$
where $s, q_1, q_2$ send $(x,t,t')$ respectively to $(x,t+t')$, $(x,t)$, $(x,t')$ and
$T_c(x,t) = (x,t+c)$.

Following~\cite{KS90} we define the functor
$P \colon \Der(\ak_{M\times\R}) \to \Der(\ak_{M\times\R})$ by
$P(F) = \RR s_*(F \boxtimes \ak_{[0,+\infty[})$.  We remark that
$\RR s_*(F \boxtimes \ak_{\{0\}})$ is naturally isomorphic to $F$. Hence the morphism
$\ak_{[0,+\infty[} \to \ak_{\{0\}}$ gives a natural morphism of functors $P \to \id$.
By~\cite{KS90} we know that $F \in \Der_{\tau\geq 0}(\ak_{M\times\R})$ if and only if
this morphism $P(F) \to F$ is an isomorphism.

More generally, we have a natural isomorphism
$\RR s_*(F \boxtimes \ak_{\{c\}}) \simeq T_{c*}F$.  If $c\geq 0$ we also have a
morphism $\ak_{[0,+\infty[} \to \ak_{\{c\}}$ and we deduce a natural morphism of
functors $\tau_c \colon P \to T_{c*}$.
\begin{definition}\label{def:tauc}
For $F \in \Der_{\tau\geq 0}(\ak_{M\times\R})$ we thus obtain what we call the
Tamarkin's morphism
\begin{equation}
  \label{eq:deftau}
  \tau_c(F) \colon F \to T_{c*}F, \qquad \text{for $c\geq0$.}
\end{equation}
We say that a $F$ is {\em torsion} if $\tau_c(F) = 0$ for some $c\geq0$.
\end{definition}

The following lemma follows from results of~\cite{GS14} (it is a quantitative version
of Lem.~6.3 with the same proof).

\begin{lemma}\label{lem:taucducone}
  Let $F,F',G \in \Der_{\tau\geq 0}(\ak_{M\times\R})$. We assume that we have a
  distinguished triangle $F\to G \to F' \to[+1]$ and that $\tau_a(F) =0$,
  $\tau_b(F') =0$, for some $a,b\geq 0$.  Then $\tau_{a+b}(G) =0$.
\end{lemma}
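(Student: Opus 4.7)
The plan is a diagram chase in the triangulated category, built on two basic facts. First, the semigroup law for Tamarkin's morphism: for any $G \in \Der_{\tau\geq 0}(\ak_{M\times\R})$ and $a,b\geq 0$,
\[
\tau_{a+b}(G) = \tau_b(T_{a*}G) \circ \tau_a(G).
\]
This comes from unwinding definitions: since $P(G) = \RR s_*(G\boxtimes \ak_{[0,+\infty[})$ and $T_{c*}G = \RR s_*(G\boxtimes \ak_{\{c\}})$, applying $\RR s_*(G\boxtimes -)$ to the factorization $\ak_{[0,+\infty[} \to \ak_{[a,+\infty[} \to \ak_{\{a+b\}}$ of sheaves on $\R$ yields the equality above (after noting $\ak_{[a,+\infty[} \simeq T_{a*}\ak_{[0,+\infty[}$, and that $T_{a*}$ commutes with $P$ and $T_{b*}$ thanks to $s \circ (T_a \times \id) = T_a \circ s$). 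Second, naturality of $\tau_c$ as a morphism of functors $\id \to T_{c*}$ on $\Der_{\tau\geq 0}(\ak_{M\times\R})$.

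With these in hand, apply naturality of $\tau_a$ to the triangle $F \to G \to F' \to[+1]$: the composite $F \to G \xrightarrow{\tau_a(G)} T_{a*}G$ equals $T_{a*}(F\to G) \circ \tau_a(F)$, which vanishes since $\tau_a(F) = 0$. The defining property of the cone of $F \to G$ then produces a morphism $h \colon F' \to T_{a*}G$ with $\tau_a(G) = h \circ (G \to F')$.

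Next, apply naturality of $\tau_b$ to the morphism $h$: the resulting commutative square gives $\tau_b(T_{a*}G) \circ h = T_{b*}h \circ \tau_b(F') = 0$ by hypothesis. Concatenating with the semigroup law,
\[
\tau_{a+b}(G) = \tau_b(T_{a*}G) \circ \tau_a(G) = \tau_b(T_{a*}G) \circ h \circ (G\to F') = 0,
\]
which is the desired vanishing. The only substantive ingredient is the semigroup law in the first paragraph; everything after it is a routine triangulated-category chase.
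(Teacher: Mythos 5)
Your proof is correct and takes essentially the same approach as the paper's: a diagram chase using the semigroup law for $\tau$, naturality, and the cone/cocone factorization property. The only difference is a mirror choice of which hypothesis to use first — you factor $\tau_a(G)$ through $G\to F'$ using $\tau_a(F)=0$, while the paper factors $\tau_b(G)$ through $T_{b*}F\to T_{b*}G$ using $\tau_b(F')=0$ — which is the same argument read from the other end.
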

\begin{proof}
  By the functorial properties of the morphism $\tau_c$ (see~\cite[\S6]{GS14}) we
  have the following commutative diagram of distinguished triangles
\begin{equation*}
  \begin{tikzcd}[column sep = 17mm, row sep = 8mm]
    F \ar[r] \ar[d, "\tau_b(F)"']
    & G \ar[r]  \ar[d, "\tau_b(G)"']\ar[dl, dotted, "u"']
    & F' \ar[r,"+1"]  \ar[d, "\tau_b(F')"'] & {} \\
    T_{b*}F \ar[r, "\alpha"] \ar[d, "T_{b*}(\tau_a(F))"']
    & T_{b*}G \ar[r, "\beta"]  \ar[d, "T_{b*}(\tau_a(G))"']
& T_{b*}F' \ar[r,"+1"]  \ar[d, "T_{b*}(\tau_a(F'))"'] & {}\\
T_{(a+b)*}F \ar[r, "\alpha'"] & T_{(a+b)*}G \ar[r] & T_{(a+b)*}F' \ar[r,"+1"] 
    & {}\rlap{,}
\end{tikzcd}
\end{equation*}
where the vertical morphisms are the morphisms $\tau_c$ of the corresponding sheaves
or their images by $T_{b*}$.  We also have
$\tau_{a+b}(H) = T_{b*}(\tau_a(H)) \circ \tau_b(H)$, for any
$H \in \Der_{\tau\geq 0}(\ak_{M\times\R})$.

Since $\tau_b(F') =0$, we have $\beta \circ \tau_{b}(G) =0$ and we can factorize
$\tau_{b}(G) = \alpha \circ u$ for some $u\colon G \to T_{b*}F$ (this follows from
the general fact that $\Hom(G,-)$ turns distinguished triangles into long exact
sequences). Hence
$$
\tau_{a+b}(G) = T_{b*}(\tau_a(G)) \circ \tau_b(G)
= \alpha' \circ T_{b*}(\tau_a(F)) \circ  u
$$
and this vanishes since $\tau_a(F)=0$.
\end{proof}

We will later assume that $M$ is endowed with a metric and we will give conditions so
that the vanishing of $\tau_c(F)|_{\{x_0\}\times \R}$, for some $x_0 \in M$, implies
the vanishing of $\tau_{c+d}(F)|_{B\times \R}$, for a ball of radius $d$ around
$x_0$.  For this it is convenient to consider $\tau_c$ as a section of some sheaf and
use microsupport estimates to extend this section.  Such a sheaf was defined by
Tamarkin as follows.  For $F, G \in \Der(\ak_{M\times\R})$ we set
\begin{equation}
  \label{eq:defhometoile}
\hom^*(F,G) = \RR q_{1*} \rhom( q_{2}^{-1} F, s^! G) .
\end{equation}
We then have

\begin{lemma}\label{lem:section_hom_etoile}
  Let $U \subset M$ be an open subset and $F, G \in \Der(\ak_{M\times\R})$. \\ Then
  $\hom^*(F,G)|_{U\times\R} \simeq \hom^*(F|_{U\times\R},G|_{U\times\R})$ and, for
  any $c\in\R$,
  $$
  \rsect_{U\times \{-c\}}(U\times \R; \hom^*(F,G))
  \simeq \RHom(F|_{U\times\R}, T_{c*}(G|_{U\times\R})) .
  $$
\end{lemma}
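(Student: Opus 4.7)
Both isomorphisms are formal consequences of the definition $\hom^*(F,G) = \RR q_{1*}\rhom(q_2^{-1}F, s^!G)$ together with standard six-functor manipulations. I do not anticipate any serious obstacle: the proof is a bookkeeping exercise with adjunctions.

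For the first isomorphism, denote by $\tilde j\colon U\times\R \hookrightarrow M\times\R$ and $\hat j\colon U\times\R^2 \hookrightarrow M\times\R^2$ the open embeddings. The three maps $s,q_1,q_2$ fit into cartesian squares with these embeddings, and since $\tilde j$ and $\hat j$ are open we have $\hat j^{-1}=\hat j^!$ and $\tilde j^{-1}=\tilde j^!$. Base change along the cartesian square over $q_1$ gives $\tilde j^{-1}\RR q_{1*}\simeq \RR q_{1*}\hat j^{-1}$; the inverse image $\hat j^{-1}$ commutes with $\rhom$; the relations $q_2\circ \hat j = \tilde j\circ q_2$ and $s\circ \hat j = \tilde j\circ s$, combined with $\hat j^!=\hat j^{-1}$ and $\tilde j^!=\tilde j^{-1}$, give $\hat j^{-1} q_2^{-1} = q_2^{-1}\tilde j^{-1}$ and $\hat j^{-1} s^! = s^!\tilde j^{-1}$. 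Composing these identifications yields $\hom^*(F,G)|_{U\times\R}\simeq \hom^*(F|_{U\times\R},G|_{U\times\R})$.

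For the second isomorphism, by part (1) I may reduce to $U=M$. Write $\rsect_{M\times\{-c\}}(M\times\R; H) = \RHom(\ak_{M\times\{-c\}},H)$ and introduce the closed embedding $i_c\colon M\times\R \hookrightarrow M\times\R^2$, $(x,t')\mapsto(x,-c,t')$, so that $q_1^{-1}\ak_{M\times\{-c\}}=\ak_{M\times\{-c\}\times\R}=i_{c*}\ak_{M\times\R}$. The chain
\begin{align*}
\RHom\bigl(\ak_{M\times\{-c\}}, \RR q_{1*}\rhom(q_2^{-1}F,s^!G)\bigr)
 &\simeq \RHom\bigl(i_{c*}\ak_{M\times\R},\rhom(q_2^{-1}F,s^!G)\bigr)\\
 &\simeq \RHom\bigl(\ak_{M\times\R}, i_c^!\rhom(q_2^{-1}F,s^!G)\bigr)\\
 &\simeq \RHom\bigl(i_c^{-1}q_2^{-1}F,\, i_c^!s^!G\bigr),
\end{align*}
obtained from the $(q_1^{-1},\RR q_{1*})$ adjunction, the $(i_{c*},i_c^!)$ adjunction, and the identity $i_c^!\rhom(A,B)\simeq \rhom(i_c^{-1}A,i_c^!B)$, reduces everything to the composed maps $q_2\circ i_c=\id_{M\times\R}$ and $s\circ i_c=T_{-c}$. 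These yield $i_c^{-1}q_2^{-1}F=F$ and $i_c^!s^!G=T_{-c}^!G$. Since $T_{-c}$ is a diffeomorphism with inverse $T_c$, we have $T_{-c}^!G=T_{-c}^{-1}G=T_{c*}G$, giving $\RHom(F,T_{c*}G)$ as claimed.

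The only point requiring care is the sign bookkeeping: one must define $i_c$ so that the second coordinate is $-c$ rather than $c$, so that the composition $s\circ i_c$ equals $T_{-c}$ and then the identity $T_{-c}^{-1}=T_{c*}$ for translation homeomorphisms produces the correctly signed translation in the final Hom.
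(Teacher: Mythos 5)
Your proof is correct, and it takes a more explicit route than the paper. The paper dispatches the second isomorphism in one line by invoking the adjunction between the Tamarkin convolution $*$ and $\hom^*$: it writes $\RHom(\ak_{U\times\{-c\}}, \hom^*(F,G)) \simeq \RHom(\ak_{U\times\{-c\}}*F, G) \simeq \RHom(T_{-c*}F,G)$, and leaves implicit the final step $\RHom(T_{-c*}F,G)\simeq\RHom(F,T_{c*}G)$ (adjunction $T_{-c*}\dashv T_{-c}^! = T_{c*}$). You instead unwind the definition $\hom^* = \RR q_{1*}\rhom(q_2^{-1}(-), s^!(-))$ directly, reducing the claim to base change, the $(q_1^{-1},\RR q_{1*})$ and $(i_{c*},i_c^!)$ adjunctions, the identity $i_c^!\rhom(A,B)\simeq\rhom(i_c^{-1}A,i_c^!B)$, and the elementary identifications $q_2\circ i_c = \id$, $s\circ i_c = T_{-c}$, $T_{-c}^! = T_{c*}$. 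This is essentially the same content as the paper's proof once one unpacks the $*$--$\hom^*$ adjunction, but your version is self-contained and lands directly on the form $\RHom(F,T_{c*}G)$ stated in the lemma without needing the extra dualization step. You also give a real proof of the first isomorphism (base change along the open embedding and compatibility of $\rhom$, $(-)^{-1}$, $(-)^!$ with open restriction), which the paper calls ``obvious''; your cartesian squares for $q_1,q_2,s$ over $U\hookrightarrow M$ and the identifications $\hat j^{-1}=\hat j^!$, $\tilde j^{-1}=\tilde j^!$ are all correct. No gap.
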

\begin{proof}
  The fist assertion is obvious. For the second isomorphism, the adjunction between
  $*$ and $\hom^*$ gives
\begin{align*}
  \rsect_{U\times \{-c\}}(U\times \R; \hom^*(F,G))
  &\simeq \RHom( \ak_{U\times\{-c\}}, \hom^*(F,G))  \\
  &\simeq \RHom( \ak_{U\times\{-c\}} * F,G)  \\
  &\simeq \RHom( T_{-c*} F, G) .
\end{align*}
\end{proof}

\begin{lemma}\label{lem:annulationR}
  Let $F\in \Der_{\tau\geq0}(\ak_{M\times\R})$ such that the map $\supp(F) \to M$ is
  proper.  Then $\rsect(M\times\R;F) \simeq 0$.
\end{lemma}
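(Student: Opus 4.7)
The plan is to show that all stalks of $\RR p_* F$ vanish, where $p\colon M\times\R \to M$ is the first projection; then $\rsect(M\times\R;F) \simeq \rsect(M;\RR p_* F) \simeq 0$ follows.

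The key input is the microlocal non-characteristic propagation of Kashiwara--Schapira applied to the function $\phi(x,t) = -t$. Its differential corresponds to the covector $(0;0,-1) \in T^*(M\times\R)$, which has $\tau = -1 < 0$. Since $\msupp(F) \subset T^*M \times \{\tau \geq 0\}$, the half-line $\R_{\geq 0} \cdot d\phi$ meets $\msupp(F)$ only at the zero section, everywhere on $M\times\R$. Consequently, whenever $\phi$ is proper on $\supp(F)$ restricted to some open set $V$, the propagation theorem yields isomorphisms of the form $\rsect(V\cap\{t\geq c\}; F) \isoto \rsect(V\cap\{t\geq c'\}; F)$ for $c\geq c'$.

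I next restrict to a relatively compact open $U \subset M$. By the properness of $\supp(F) \to M$, the closed set $\supp(F)\cap (\overline{U}\times\R)$ is compact, hence contained in $\overline{U}\times[-C,C]$ for some $C = C(U)>0$. Thus $\phi = -t$ is proper on $\supp(F|_{U\times\R})$, and the propagation applies on $U\times\R$. Choosing $c' < -C$ and $c > C$, the set $U\times[c',+\infty[$ contains $\supp(F|_{U\times\R})$, so $\rsect(U\times[c',+\infty[;F) \simeq \rsect(U\times\R;F)$; while $U\times[c,+\infty[$ is disjoint from $\supp(F|_{U\times\R})$, so $\rsect(U\times[c,+\infty[;F) \simeq 0$. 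Combining these two facts with the propagation isomorphism gives $\rsect(U\times\R;F) \simeq 0$ for every relatively compact open $U\subset M$.

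Since relatively compact opens form a basis of neighborhoods of any point $x\in M$, the stalk
$$
(\RR p_* F)_x \simeq \mathrm{colim}_{U\ni x} \rsect(U\times\R;F)
$$
vanishes, so $\RR p_* F \simeq 0$ and the lemma follows. The only genuine subtlety is that the propagation theorem demands $\phi$ to be proper on $\supp(F)$, which fails globally when $M$ is non-compact; this is precisely what the proper-support hypothesis on $F$ is used to restore after localizing to $U\times\R$.
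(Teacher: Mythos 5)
Your overall strategy---reduce to the vanishing of the stalks of $\RR p_*F$, then invoke the microlocal Morse theorem in the $t$-direction---is the same as the paper's, which also works one point of $M$ at a time and cites~\cite[Cor.~5.4.19]{KS90}.

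However, the step in which you assert that $\phi=-t$ is proper on $\supp(F|_{U\times\R})$ contains a genuine gap. From the inclusion $\supp(F)\cap(\overline{U}\times\R)\subset\overline{U}\times[-C,C]$ you may conclude that $\phi^{-1}(K)\cap\supp(F|_{U\times\R})$ is \emph{bounded}, but not that it is compact: since $U$ is open and non-compact, this set is only closed in $U\times\R$ and may accumulate on $\partial U\times[-C,C]$ (for instance if $\supp(F)\cap(\overline U\times\R)$ meets the boundary $\partial U\times\R$). Properness of $\phi$ on $\supp(F)$ is a genuine hypothesis of~\cite[Cor.~5.4.19]{KS90}, and your argument does not establish it on $U\times\R$. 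The way the paper avoids this is to use the properness of $\supp(F)\to M$ once more, via proper base change, to identify $(\RR p_*F)_x$ directly with $\rsect(\{x\}\times\R;\,F|_{\{x\}\times\R})$; on the fiber the support $\supp(F)\cap(\{x\}\times\R)$ \emph{is} compact, so $\phi$ is automatically proper on it, and the one-dimensional Morse argument then goes through cleanly (one also needs $\msupp(F|_{\{x\}\times\R})\subset\{\tau\geq0\}$, which is a standard consequence of the defining property of $\Der_{\tau\geq 0}$). You should either switch to this fiberwise reduction or supply a separate justification of properness on $U\times\R$; as written, the claim is false in general.
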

\begin{proof}
  Let $p\colon M\times\R \to M$ be the projection.  It is enough to see that, for any
  $x\in M$, the stalk $(\RR p_*F)_x \simeq \rsect(\{x\}\times\R; F|_{\{x\}\times\R})$
  vanishes.  We choose $a<b$ with $\supp(F|_{\{x\}\times\R}) \subset [a,b]$. We have
$$
    \rsect(\R; F|_{\{x\}\times\R}) \isoto \rsect(\mo]-\infty,b+1[; F|_{\{x\}\times\R}) 
\isoto  \rsect(\mo]-\infty,a-1[; F|_{\{x\}\times\R})  \simeq 0 ,
$$
where the second isomorphism follows from the Morse result~\cite[Cor.~5.4.19]{KS90}
(applied with the function $\phi(t) = t$).
\end{proof}

\begin{lemma}\label{lem:propag1}
  Let $F \in \Der_{\tau\geq0}(\ak_{M\times\R})$.  Then, for any $a<b$, we have
  $$\rsect(M\times\mo]a,b[; F)[-1] \isoto \rsect_{M\times \{b\}}(M\times \R; F).$$
\end{lemma}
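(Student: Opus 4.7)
\emph{Proof plan.} I would start from the open/closed decomposition of the half-open interval $\mo]a,b] = \mo]a,b[ \sqcup \{b\}$, which yields the distinguished triangle of sheaves on $M\times\R$
$$
\ak_{M\times\mo]a,b[} \to \ak_{M\times\mo]a,b]} \to \ak_{M\times\{b\}} \to[+1] .
$$
Applying $\RHom(-,F)$ and using that the outer terms are extensions by zero from an open and a closed subset respectively, this gives the triangle
$$
\rsect_{M\times\{b\}}(M\times\R;F) \to \RHom(\ak_{M\times\mo]a,b]},F) \to \rsect(M\times\mo]a,b[;F) \to[+1] .
$$
If the middle term vanishes, the connecting morphism is exactly the desired $[-1]$-shift isomorphism.

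The key step is therefore to prove $\RHom(\ak_{M\times\mo]a,b]},F) \simeq 0$. I would do this by applying Lemma~\ref{lem:annulationR} to the sheaf $G = \rhom(\ak_{M\times\mo]a,b]},F)$, since $\rsect(M\times\R;G) \simeq \RHom(\ak_{M\times\mo]a,b]},F)$. Two hypotheses must be checked: $G \in \Der_{\tau\geq 0}(\ak_{M\times\R})$, and that $\supp(G)$ projects properly to $M$.

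For the microsupport, $\mo]a,b]$ is open at $a$ and closed at $b$; a direct local computation (alternatively, the triangle $\ak_{M\times\mo]b,+\infty[} \to \ak_{M\times\mo]a,+\infty[} \to \ak_{M\times\mo]a,b]} \to[+1]$ combined with the standard microsupport of $\ak_{M\times\mo]c,+\infty[}$) gives $\dmsupp(\ak_{M\times\mo]a,b]}) \subset \{\tau<0\}$. Since $\dmsupp(F)\subset\{\tau\geq 0\}$ by hypothesis, the two are disjoint and the $\rhom$-bound in~\eqref{eq:borne-msupp} applies, yielding
$$
\msupp(G) \subset \msupp(\ak_{M\times\mo]a,b]})^a + \msupp(F) \subset \{\tau\geq 0\} .
$$
For the support, $\supp(G) \subset \supp(\ak_{M\times\mo]a,b]}) \subset M\times[a,b]$ is bounded in the $\R$-direction, so its fibers over $M$ are compact and the projection is proper. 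Lemma~\ref{lem:annulationR} then gives $\rsect(M\times\R;G)\simeq 0$, and the triangle above delivers the lemma. The only mildly technical input is the microsupport computation at the two endpoints of $\mo]a,b]$ and the verification that the non-zero microsupports are disjoint; everything else is formal.
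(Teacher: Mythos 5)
Your proof is correct and follows exactly the same route as the paper: apply $\RHom(-,F)$ to the open/closed decomposition triangle $\ak_{M\times\mo]a,b[}\to\ak_{M\times\mo]a,b]}\to\ak_{M\times\{b\}}$, then invoke Lemma~\ref{lem:annulationR} on $\rhom(\ak_{M\times\mo]a,b]},F)$ to kill the middle term. The only difference is that you spell out the verification that $\rhom(\ak_{M\times\mo]a,b]},F)$ lies in $\Der_{\tau\geq 0}$ and has $M$-proper support, which the paper leaves implicit.
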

\begin{proof}
  Applying $\RHom(-,F)$ to the distinguished triangle
  $\ak_{M\times ]a,b[} \to \ak_{M\times ]a,b]} \to \ak_{M\times \{b\}} \to
  \ak_{M\times ]a,b[}[1]$ shows that the cone of the morphism of the lemma is
  $\rsect(M\times \R; F')$, where $F' = \rhom(\ak_{M\times ]a,b]} , F)$. By
  Lemma~\ref{lem:annulationR} this cone vanishes.
\end{proof}

We will have to consider sheaves $F$ on $\R$ for which it is easier to compute the
``costalks'' $\rsect_{\{t\}}(F)$ than directly the stalks $F_t$ and the next result
will be useful.
\begin{lemma}\label{lem:cogermeszero}
  Let $F\in \Der_{\tau\geq0}(\ak_\R)$. We assume $\rsect_{\{t\}}(F) \simeq 0$ for all
  $t>0$. Then $F|_{[0,\infty[} \simeq 0$.
\end{lemma}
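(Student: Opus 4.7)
The plan is to reduce the claim to Lemma~\ref{lem:propag1} applied in the case $M = \pt$. That lemma gives, for any $a < b$, the isomorphism
\begin{equation*}
  \rsect(\mo]a,b[; F)[-1] \isoto \rsect_{\{b\}}(\R; F).
\end{equation*}
By the hypothesis that the costalks of $F$ vanish at every $b > 0$, the right-hand side is $0$ whenever $b>0$. Hence $\rsect(\mo]a,b[; F) \simeq 0$ for every open interval $\mo]a,b[$ with $b>0$.

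Next I would translate this into the vanishing of stalks on $[0,+\infty[$. For any $t \geq 0$, the stalk is
\begin{equation*}
  F_t \simeq \mathop{\mathrm{colim}}_{\epsilon \to 0^+}\rsect(\mo]t-\epsilon, t+\epsilon\mc[; F),
\end{equation*}
and since $t+\epsilon > 0$ for every $\epsilon>0$, each term in this colimit is $0$ by the previous paragraph. Thus $F_t \simeq 0$ for all $t \geq 0$, so $F|_{[0,+\infty[}$ has vanishing stalks, hence is isomorphic to $0$ in the derived category.

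There is really no serious obstacle here once Lemma~\ref{lem:propag1} is in hand; the entire content is that for an object of $\Der_{\tau\geq 0}(\ak_\R)$, the costalk at $b$ already controls the sections on any interval $\mo]a,b\mc[$ ending at $b$, and those sections in turn control the stalks at points $\leq b$. The only thing to be slightly careful about is handling the endpoint $t = 0$: the hypothesis only gives vanishing of costalks for $t > 0$ strictly, but the formula $F_0 \simeq \mathrm{colim}_{\epsilon \to 0^+} \rsect(\mo]-\epsilon, \epsilon\mc[; F)$ uses intervals with strictly positive right endpoint, so the argument covers $t=0$ automatically.
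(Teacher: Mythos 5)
Your proof is correct and follows essentially the same route as the paper's: both invoke Lemma~\ref{lem:propag1} to identify $\rsect(\mo]t-\varepsilon,t+\varepsilon[; F)$ with $\rsect_{\{t+\varepsilon\}}(\R;F)$ (up to shift), observe this vanishes for $t\geq 0$, $\varepsilon>0$ by the hypothesis, and then use the stalk formula $F_t \simeq \varinjlim_{\varepsilon>0}\rsect(\mo]t-\varepsilon,t+\varepsilon[;F)$ to conclude. Your remark on why $t=0$ is automatically handled is exactly the point the paper's phrase ``for all $t\geq 0$ and $\varepsilon>0$'' encodes.
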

\begin{proof}
  We have $H^iF_t \simeq \varinjlim_{\varepsilon>0} H^i(]t-\varepsilon,t+\varepsilon[; F)$
  (see~\cite[Rem.~2.6.9]{KS90}).  By Lemma~\ref{lem:propag1} we have
  $$
  \rsect(]t-\varepsilon,t+\varepsilon[; F) \simeq \rsect_{\{t+\varepsilon\}}(\R;F)
  \simeq \rsect(\R; \rsect_{\{t+\varepsilon\}}F))
  $$
  and this vanishes for all $t\geq 0$ and $\varepsilon>0$ by the hypotheses of the
  lemma. Hence $H^iF_t \simeq 0$ for all $t\geq0$ and $i\in\Z$ and the result
  follows.
\end{proof}

\subsection{Viterbo spectral invariants and boundary depth}
\label{sec:Vsi}

In this section we recall the definition of Viterbo spectral invariants from the
point of view of sheaf theory.  Let $\ol \Lambda \subset T^*M$ be a compact exact
Lagrangian submanifold and let $\Lambda \subset T^*_{\tau>0}(M\times\R)$ be a
$\R_{>0}$-conic lift of $\ol\Lambda$.  We know by~\cite{G19} or~\cite{V19} that there
exists $F \in \Der(\ak_{M\times\R})$ such that $\dmsupp(F) = \Lambda$,
$F|_{M \times \{t\}} \simeq 0$. Hence it is natural to consider the following
conditions, for a general $F \in \Der(\ak_{M\times\R})$:
\begin{equation}
  \label{eq:cond_faiscMR}
\left\{  \begin{aligned}
& \dmsupp(F) \subset T^*(M\times \mo]-A,A[) \cap \{\tau\geq 0\} ,\text{ for some $A>0$,} \\
& F|_{M \times \{t\}} \simeq 0, \qquad \text{for $t < -A$}, \\
\end{aligned} \right.
\end{equation}
The sheaf $F|_{M\times \mo]A,+\infty[}$ is then locally constant; hence
\begin{equation}
  \label{eq:defF+}
F_+ = F|_{M \times \{t\}}, \qquad \text{ for $t>A$,}   
\end{equation}
is well-defined.

We will in fact consider more general $\R_{>0}$-conic Lagrangian submanifold of
$\Lambda \subset T^*_{\tau>0}(M\times\R)$, namely those coming from an {\em immersed}
compact exact Lagrangian submanifold $\ol \Lambda$ of $T^*M$.  We denote by
\begin{equation}\label{eq:defDerLambdaplus}
\Der_{\Lambda,+}(\ak_{M\times\R})
\end{equation}
the subcategory of $\Der(\ak_{M\times\R})$ formed by the $F$ such that
$\dmsupp(F) = \Lambda$ and $F|_{M \times \{t\}} \simeq 0$ for $t \ll 0$ (in
particular $F$ satisfies~\eqref{eq:cond_faiscMR} because the compactness of
$\ol \Lambda$ implies $\Lambda \subset T^*(M\times \mo]-A,A[)$ for some $A>0$).

\begin{remark}\label{lem:equiv-cat}
  (i) In the case where $\ol\Lambda$ is embedded we have a uniqueness statement
  in~\cite{G19} or~\cite{V19}. Let $\Dlc(\ak_M)$ be the subcategory of $\Der(\ak_M)$
  formed by the $G$ such that $H^iG$ is a locally constant sheaf, for any $i\in\Z$.
  Then $\Der_{\Lambda,+}(\ak_{M\times\R}) \to \Dlc(\ak_M)$, $F\mapsto F_+$, is an
  equivalence of categories. In particular there exists a unique
  $F \in \Der_{\Lambda,+}(\ak_{M\times\R})$ such that $F_+ \simeq \ak_M$.

  \smallskip\noindent (ii) By the results of~\cite{GKS12} we have an immediate
  generalization of this equivalence if we replace $\Lambda$ by a Legendrian
  deformation $\Lambda_1$ of $\Lambda$ (here we identify $\R_{>0}$-conic Lagrangian
  submanifolds of $T^*_{\tau>0}(M\times\R)$ with Legendrian submanifolds of
  $J^1(M)$).  Indeed, for such a deformation, \cite{GKS12} gives an equivalence
  $\Der_{\Lambda}(\ak_{M\times\R}) \simeq \Der_{\Lambda_1}(\ak_{M\times\R})$ and we
  deduce easily that $\Der_{\Lambda_1,+}(\ak_{M\times\R}) \to \Der(\ak_M)$,
  $F\mapsto F_+$, is again an equivalence.
\end{remark}

\begin{lemma}
  For $F \in \Der(\ak_{M\times\R})$ satisfying~\eqref{eq:cond_faiscMR} we have
  $$
  \rsect(M\times\R; F) \isoto \rsect(M\times \mo]t;+\infty[; F) \isoto \rsect(M;F_+)
  $$
for all $t$ and
\begin{alignat*}{2}
 \rsect(M\times \mo]-\infty,t[; F) &\isofrom  \rsect(M\times\R; F) &\qquad &\text{for $t>A$}, \\
  \rsect(M\times \mo]-\infty,t[; F)&\simeq 0 && \text{for $t<-A$}.
\end{alignat*}
\end{lemma}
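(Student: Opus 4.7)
The strategy is to distill two preliminary facts from \eqref{eq:cond_faiscMR} and then reduce every assertion to them. First, the pointwise vanishing $F|_{M\times\{t\}}\simeq 0$ for $t<-A$ forces $F|_{M\times\mo]-\infty,-A[}\simeq 0$, so $\supp(F)\subset M\times[-A,+\infty[$; in particular $F\in\Der_{\tau\geq 0}(\ak_{M\times\R})$. Second, over $M\times\mo]A,+\infty[$ the microsupport of $F$ sits in the zero section, so the restriction is cohomologically locally constant; since $\mo]A,+\infty[$ is contractible, this gives an isomorphism $F|_{M\times\mo]A,+\infty[}\simeq F_+\boxtimes\ak_{\mo]A,+\infty[}$, and hence $\rsect(M\times\mo]t,+\infty[;F)\simeq\rsect(M;F_+)$ for every $t>A$ by K\"unneth.

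For the first chain of isomorphisms, I would fit the map $\rsect(M\times\R;F)\to\rsect(M\times\mo]t,+\infty[;F)$ into the distinguished triangle whose third term is $\rsect_{M\times\mo]-\infty,t]}(M\times\R;F)=\rsect(M\times\R;G)$, with $G=\rhom(\ak_{M\times\mo]-\infty,t]},F)$. The disjointness condition (iv) preceding \eqref{eq:borne-msupp} holds because $\dmsupp(\ak_{M\times\mo]-\infty,t]})\subset\{\tau<0\}$ whereas $\dmsupp(F)\subset\{\tau\geq 0\}$; the microsupport bound then yields $\msupp(G)\subset\{\tau\geq 0\}$, while the first preliminary fact forces $\supp(G)\subset M\times[-A,t]$, which is proper over $M$. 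Lemma~\ref{lem:annulationR} applied to $G$ gives $\rsect(M\times\R;G)\simeq 0$, establishing the first iso for every $t$. The second iso then follows by composing with the K\"unneth identification at any $t'>A$.

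For the displayed alignat assertions, the vanishing for $t<-A$ is immediate from $F|_{M\times\mo]-\infty,-A[}\simeq 0$. For $t>A$, I would apply Mayer--Vietoris to the cover $M\times\R=U\cup V$ with $U=M\times\mo]-\infty,t[$ and $V=M\times\mo]A,+\infty[$, so $U\cap V=M\times\mo]A,t[$. By the second preliminary fact, the restriction $\rsect(V;F)\to\rsect(U\cap V;F)$ is the identity on $\rsect(M;F_+)$, so the Mayer--Vietoris triangle collapses to $\rsect(M\times\R;F)\isoto\rsect(U;F)$.

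I expect the only delicate step to be the microsupport and support bookkeeping for $G$ in the second paragraph; once these are settled, the remainder is standard K\"unneth and formal triangle manipulation.
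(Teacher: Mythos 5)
Your proof is correct and follows essentially the same route as the paper: you identify the cone of the first restriction map with the global sections of $G = \rsect_{M\times\mo]-\infty,t]}(F)$, verify the non-characteristic condition (indeed $\dmsupp(\ak_{M\times\mo]-\infty,t]}) \subset \{\tau<0\}$ is disjoint from $\dmsupp(F) \subset \{\tau\geq 0\}$) so that~\eqref{eq:borne-msupp} yields $\msupp(G)\subset\{\tau\geq 0\}$, note that $\supp(G)\subset M\times[-A,t]$ is proper over $M$, and invoke Lemma~\ref{lem:annulationR} to kill the cone; the local constancy of $F$ over $M\times\mo]A,+\infty[$ then handles the K\"unneth identification. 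The only variation is the third isomorphism: the paper again "reduces to Lemma~\ref{lem:annulationR}" using local constancy, while you run a Mayer--Vietoris triangle with $U = M\times\mo]-\infty,t[$ and $V = M\times\mo]A,+\infty[$, using that $\rsect(V;F)\to\rsect(U\cap V;F)$ is an isomorphism; this is an equivalent and, if anything, more explicit route to the same conclusion. One cosmetic inaccuracy: the inclusion $F\in\Der_{\tau\geq 0}(\ak_{M\times\R})$ does not follow from $\supp(F)\subset M\times[-A,+\infty[$ as your "in particular" suggests (a skyscraper on a slice would be a counterexample); it is instead an immediate consequence of the first line of~\eqref{eq:cond_faiscMR}, which bounds $\dmsupp(F)$ inside $\{\tau\geq 0\}$. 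Since both facts do hold under the hypotheses, nothing downstream is affected.
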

\begin{proof}
  The cone of the first morphism is $\rsect(M\times\R; \rsect_{\mo]-\infty,t]}(F))$,
  which vanishes by Lemma~\ref{lem:annulationR} (the support is proper by the
  hypothesis~\eqref{eq:cond_faiscMR}).  The second isomorphism is clear when $t>A$
  because $F|_{M\times \mo]A,+\infty[}$ is locally constant.  The third isomorphism
  also reduces to Lemma~\ref{lem:annulationR}, using that
  $F|_{M\times \mo]A,+\infty[}$ is locally constant. The last isomorphism is clear
  by~\eqref{eq:cond_faiscMR}.
\end{proof}

\begin{definition}
  For $F \in \Der(\ak_{M\times\R})$ satisfying~\eqref{eq:cond_faiscMR} and
  $\alpha \in H^\star(M;F_+)$ we set $c(F,\alpha) = \sup\{t$;
  $\alpha|_{M\times \mo]-\infty,t[} = 0\}$.  We set $c_-(F) = \min\{ c(F,\alpha)$;
  $\alpha \in H^\star(M;F_+)\}$ and $c_+(F) = \max\{ c(F,\alpha)$;
  $\alpha \in H^\star(M;F_+)\}$.
\end{definition}

\begin{lemma}\label{lem:cas_F_canonique}
  We assume that $\Lambda \subset T^*_{\tau>0}(M\times\R)$ is as in
  Remark~\ref{lem:equiv-cat},(i) or (ii), and we let
  $F \in \Der_{\Lambda,+}(\ak_{M\times\R})$ be the unique sheaf with
  $F_+ \simeq \ak_M$. Then, for $\alpha \in H^i(M;\ak_M)$, we have
  $c(F,\alpha) = c(\Lambda,\alpha)$ (the usual Viterbo invariants of $\Lambda$).
\end{lemma}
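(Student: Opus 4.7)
The plan is to identify the canonical sheaf $F$ of the lemma with the sheaf naturally built from a generating function quadratic at infinity (GFQI) of $\ol\Lambda$, since Viterbo's $c(\Lambda,\alpha)$ is defined via the sublevel-set filtration of such a GFQI. The uniqueness statement of Remark~\ref{lem:equiv-cat} makes this identification automatic once a candidate sheaf is produced.

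By classical existence results there is a generating function $S\colon M\times\R^N \to \R$ for $\ol\Lambda$, unique up to stabilisation (and Viterbo's invariants are insensitive to stabilisation). In case~(ii) the Legendrian deformation of $\Lambda$ may be realised via a compactly supported contact isotopy of the zero section, producing an admissible $S$. From $S$ I form
$$
F_S := \RR p_*\,\ak_{\{(x,\xi,t)\,:\,S(x,\xi)\leq t\}},\qquad
p\colon M\times\R^N\times\R \to M\times\R.
$$
Using the bounds~\eqref{eq:borne-msupp} one checks that $F_S\in\Der_{\tau\geq 0}(\ak_{M\times\R})$ with $\dmsupp(F_S)=\Lambda$, and $F_S|_{M\times\{t\}}\simeq 0$ for $t\ll 0$. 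For $t\gg 0$ the fibre $\{\xi : S(x,\xi)\leq t\}$ is contractible (by the Morse lemma at infinity for the quadratic form at infinity), so $(F_S)_+\simeq \ak_M$. By Remark~\ref{lem:equiv-cat} this yields a unique isomorphism $F_S \simeq F$.

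A direct computation, using that for each $(x,\xi)$ with $S(x,\xi)<t_0$ the fibre $[S(x,\xi),t_0)$ of the projection $p$ is contractible, gives
$$
\rsect(M\times\mo]-\infty,t[;\,F_S)\simeq \rsect(\{S<t\};\ak),
$$
and under this identification the natural restriction $\rsect(M\times\R;F_S)\to\rsect(M\times\mo]-\infty,t[;F_S)$ corresponds to the pullback $H^\star(E^{+\infty})\to H^\star(E^t)$ for $E^t=\{S<t\}$. Since Viterbo's invariant equals $\sup\{t:\alpha|_{E^t}=0\}$ (passing from the relative-cohomology formulation via the long exact sequence of the triple $(E^{+\infty},E^t,E^{-\infty})$, using the Thom normalisation reflected in $(F_S)_+\simeq \ak_M$), the desired equality $c(F,\alpha)=c(\Lambda,\alpha)$ follows at once.

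The main technical obstacle lies in carefully orienting the conic lift, so that the microsupport bound gives $\tau\geq 0$ rather than $\tau\leq 0$, and in arranging the index shift so that $(F_S)_+$ is genuinely $\ak_M$ with no residual grading twist. In case~(ii) one also has to invoke the sheaf-theoretic propagator of~\cite{GKS12} along the Legendrian isotopy to reduce to a GFQI-representable member of the class, and check that Viterbo's invariants are transported correctly along this isotopy.
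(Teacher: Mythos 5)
Your plan is the same as the paper's in the GFQI case: build the sheaf $\RR q_*(\ak_{\{t\geq f(x,v)\}})[i]$ from a generating function quadratic at infinity, verify $\dmsupp\subset\Lambda$ and $(F_S)_+\simeq\ak_M$, invoke the uniqueness of Remark~\ref{lem:equiv-cat}, and identify $\rsect(M\times\mo]-\infty,t[;F)$ with $\rsect(\{f<t\};\ak)$, recovering Viterbo's definition. That much is sound.

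However there is a genuine gap. You write ``by classical existence results there is a generating function $S\colon M\times\R^N\to\R$ for $\ol\Lambda$'' and rest the whole argument on this. That is only known for Lagrangians Hamiltonian isotopic to the zero section (Sikorav, Laudenbach--Sikorav, Viterbo). Case~(i) of Remark~\ref{lem:equiv-cat} is a general embedded compact exact Lagrangian, for which the existence of a GFQI is open, and your reduction of case~(ii) via a Legendrian isotopy still terminates at the same unproven existence. The paper is explicit about this: after treating the GFQI case it says ``When $\Lambda$ is not given by a generating function, we can still use the construction of $F$ in \cite{V19} to see that we obtain the definition of $c(\Lambda,\alpha)$ by Floer homology.'' Without some version of that second branch (Viterbo's sheaf quantization via Floer theory, or an appeal to the fact that the spectral invariants on both sides are defined via Floer cohomology in the non-GFQI case), your proof does not cover the generality of the statement.

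Two secondary points. First, $q$ is not proper on the support, so the microsupport bound~\eqref{eq:borne-msupp} does not apply directly; the paper flags this and argues it still holds because $f$ is a fibration at infinity, whereas you pass over it silently. Second, you defer the degree bookkeeping (``arranging the index shift so that $(F_S)_+$ is genuinely $\ak_M$'') rather than building the shift $[i]$ by the index of the quadratic form into the definition of the sheaf, which is where the paper puts it; leaving this vague obscures exactly the normalization that makes $c_+(F)=c(\Lambda,\delta_M)$ and $c_-(F)=c(\Lambda,1)$ come out with the right degrees.
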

In particular we have $c_-(F) = c(\Lambda,1)$ with $1 \in H^0(M;\ak_M) \simeq \ak$
and, if $M$ is oriented or $\ak$ is of characteristic $2$, we have
$c_+(F) = c(\Lambda,\delta_M)$, where $\delta_M \in H^n(M;\ak_M)$ is the fundamental
class, $n=\dim M$.
  
\begin{proof}
  If we assume that $\Lambda$ has a generating function quadratic at infinity, say
  $f\colon M \times \R^N \to \R$, we have an easy construction of the sheaf as
  $F = \RR q_*(\ak_{\{t\geq f(x,v)\}})[i]$, where
  $q\colon M \times \R^{N+1} \to M\times\R$ is the projection and $i$ the index of
  the quadratic form which coincides with $f$ at infinity.  To see that this gives
  the correct sheaf we check using~\eqref{eq:borne-msupp} that
  $\dmsupp(F) \subset \Lambda$ (the map $q$ is not proper, but, since $f$ is a
  fibration at infinity, we can check directly that~\eqref{eq:borne-msupp} still
  holds here); we also see that $F_+ \simeq \ak_M$ and we conclude by the uniqueness
  property that this $F$ is the same as in the statement of the lemma.  Now
  $H^i(M\times \mo]-\infty,t\mc[; F) \simeq H^i(\{f<t\},\ak)$ and we recover the
  original definition of $c(\Lambda,\alpha)$ in~\cite{Vit92}.

  When $\Lambda$ is not given by a generating function, we can still use the
  construction of $F$ in~\cite{V19} to see that we obtain the definition of
  $c(\Lambda,\alpha)$ by Floer homology.
\end{proof}

\begin{remark}
  Let $f \colon M \to N$ be a proper map and $f' = f\times \id_\R$.
  By~\eqref{eq:borne-msupp} we see that $\RR f'_*F$
  satisfies~\eqref{eq:cond_faiscMR} if $F$ does. We also have
  $( \RR f'_*F)_+ \simeq \RR f_*F_+$ and we can identify
  $ H^\star(N;( \RR f'_*F)_+)$ with $H^\star(M;F_+)$.  Since
  $\rsect(M\times \mo]-\infty,t\mc[; F) \simeq \rsect(N\times \mo]-\infty,t\mc[; \RR
  f'_*F)$, we have
  \begin{equation*}
    c(F,\alpha) =  c(\RR f'_*F,\alpha) .
  \end{equation*}
  In particular, when $N$ is a point we have $c(F,\alpha) = c(\RR a_*F,\alpha)$,
  where $a \colon M\times\R \to \R$ is the projection.
\end{remark}

When $G = \RR a_*F$ is constructible (see Lemma~\ref{lem:im_directe_constr} for
a sufficient condition), we have a description of the numbers $c(G,\alpha)$ through
``barcodes''.  We will not need constructible sheaves in general and just recall what
it means over $\R$ (see~\cite{KS18} for more details).

An object $G \in \Der(\ak_{\R})$ is constructible if there exists finitely many
points $t_1$,\dots, $t_N \in \R$ such that the restriction of $G$ to each component of
$\R\setminus\{t_1,\dots,t_N\}$ is constant and moreover each stalk
$G_t \in \Der(\ak)$, $t\in \R$, is a bounded complex with finite dimensional
cohomology. It is not difficult to give an equivalence between the category of
constructible sheaves with respect to $\{t_1,\dots,t_N\}$ and a category of quiver
representations for a quiver of type $A_{2N+1}$.  Since we work with coefficient in a
field, we can apply Gabriel theorem and deduce that $G$ is a sum of constant sheaves
on intervals.  So we can write
$G \simeq \bigoplus_{I\in \mathcal{I}} \ak_I^{n_I}[d_I]$, where $\mathcal{I}$ is a
finite family of intervals (with ends belonging to
$\{-\infty,t_1,\dots,t_N,+\infty\}$), $n_I \in \N$ and $d_I\in \Z$.  If we assume
moreover, as will always be the case, that $\msupp(G) \subset \{\tau\geq 0\}$, we
know that these intervals are of the type $I= [a,b[$ with
$a\in \R\sqcup \{-\infty\}$, $b\in \R\sqcup \{+\infty\}$; in this case we call $G$ a
{\em barcode}.

Now we come back to $G = \RR a_*F$ where $F$ satisfies~\eqref{eq:cond_faiscMR}
and $G$ is assumed constructible.  By~\eqref{eq:cond_faiscMR} we have $G_t \simeq 0$
for $t\ll0$.  It follows from the above discussion that
\begin{equation}\label{eq:dec_barcode}
  G \simeq \bigoplus_{i \in \mathcal{I}_1} \ak_{[c_i, +\infty[}^{n_i}[d_i]
  \oplus \bigoplus_{j \in \mathcal{I}_2} \ak_{[a_j, b_j[}^{n_j}[d_j]
\end{equation}
with $a_i$, $b_i$, $c_i \in\R$, $n_i\in \N$ and $d_i\in \Z$.  Then
$H^\star(M; F_+) \simeq \bigoplus_{i \in \mathcal{I}_1} \ak^{n_i}[d_i]$ and the
numbers $c(\alpha, F)$ coincide with the $c_i$'s.  In particular
\begin{equation}
  \label{eq:c+c-cas_constr}
c_-(F) = \min\{c_i; \; i\in \mathcal{I}_1\}, \qquad
c_+(F) = \max\{c_i; \; i\in \mathcal{I}_1\} .  
\end{equation}

Let $\Lambda \subset T^*_{\tau>0}(M\times\R)$ be as above a $\R_{>0}$-conic closed
Lagrangian submanifold.  Let $F \in \Der(\ak_{M\times\R})$ with
$\dmsupp(F) = \Lambda$ and let $(z_0;\eta_0) \in \Lambda$.  We choose
$\varphi \colon M\times\R \to \R$ such that $\Gamma_{d\varphi} = \{(z;d\varphi(z))\}$
intersects $\Lambda$ transversely at $(z_0;\eta_0)$.  Then we know by Prop.~7.5.3
and Prop.~7.5.9 of~\cite{KS90} that, if $\Lambda$ is connected,
$(\rsect_{\{\varphi \geq \varphi(z_0)}(F))_{z_0} \in \Der(\ak)$ is independent of the
choice of $z_0$ and $\varphi$, up to a shift in degree.  With a suitable
normalization of the shift (see~\cite{KS90})
$(\rsect_{\{\varphi \geq \varphi(z_0)}(F))_{z_0}$ is called the {\em type} of $F$.
We say $F$ is of {\em finite type} if the total cohomology of its type is finite
dimensional. In~\cite{KS90} $F$ is called {\em simple} if the total cohomology of its
type is of dimension $1$.  The sheaf $F$ of Lemma~\ref{lem:cas_F_canonique} is
simple.

\begin{lemma}\label{lem:im_directe_constr}
  We assume that $\Lambda$ is a $\R_{>0}$-conic lift of an immersed compact exact
  Lagrangian submanifold $\ol \Lambda$ of $T^*M$ such that $\ol \Lambda$ meets $0_M$
  transversely.  Let $F \in \Der_{\Lambda,+}(\ak_{M\times\R})$ such that $F$ is of
  finite type in the above sense. Then $\RR a_*F$ is constructible.
\end{lemma}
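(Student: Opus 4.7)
The strategy is to verify directly the two requirements for $G := \RR a_* F$ to be a constructible sheaf on $\R$ in the sense recalled just before the lemma: first, a finite set of points outside of which $G$ is locally constant; second, that each stalk is a bounded complex with finite-dimensional cohomology.

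For the first point, I would estimate $\msupp(G)$ via the direct image bound~\eqref{eq:borne-msupp}, applicable since $M$ compact makes $a\colon M\times\R\to\R$ proper. The preimage $a_d^{-1}$ selects in $\msupp F$ the covectors with vanishing $M$-component; since $\msupp(F)\subset\Lambda\cup 0_{M\times\R}$ and $\Lambda$ is $\R_{>0}$-conic, such covectors with $\tau\neq 0$ are exactly the conic lifts of the points of $\ol\Lambda\cap 0_M$. Compactness of $\ol\Lambda$ combined with the transversality hypothesis makes $\ol\Lambda\cap 0_M$ finite, producing finitely many heights $t_1,\dots,t_k\in\R$ and the bound
$$
\msupp(G)\subset 0_\R \cup \bigcup_{i=1}^k \{t_i\}\times\R_{\geq 0}.
$$
Hence $G|_{\R\setminus\{t_1,\dots,t_k\}}$ is locally constant, and proper base change together with the vanishing $F|_{M\times\{t\}}\simeq 0$ for $t<-A$ from~\eqref{eq:cond_faiscMR} yields $G_t\simeq 0$ for such $t$.

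For the second point, I would induct on the sorted distinct critical values $t_{[1]}<\cdots<t_{[m]}$ among the $t_i$'s, starting from the vanishing to the left of $t_{[1]}$. Between two consecutive critical values $G$ is locally constant, so all its stalks there agree; the transition from the value just to the left of $t_{[j]}$ to the value just to the right is encoded in a distinguished triangle whose third term, by proper base change, is governed by the local cohomology of $F$ along $M\times\{t_{[j]}\}$. A further application of the first-step microsupport analysis to this local cohomology sheaf confines its contribution to the finitely many points $(x_\ell,t_{[j]})$ with $t_\ell=t_{[j]}$. At each such point the transversality of $\ol\Lambda$ with $0_M$ guarantees that the graph of $d\varphi$ for $\varphi(x,t)=t$ meets $\Lambda$ transversely at the corresponding lift, so the local contribution is, up to a shift, the \emph{type} of $F$ at that point of $\Lambda$; by the finite-type hypothesis its total cohomology is finite-dimensional. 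Therefore every transition is finite-dimensional, and the induction yields finite-dimensional stalks everywhere.

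The main obstacle, in my view, is identifying the transition across each $t_{[j]}$ with a direct sum of locally-defined types of $F$. Concretely one has to combine proper base change, to reduce to a local cohomology computation on $M\times\{t_{[j]}\}$, with the microsupport estimate from the first step to localize this contribution to the finitely many critical points; the transversality hypothesis enters exactly here, to match the resulting local invariant with the \emph{type} in the sense of~\cite{KS90}, which by assumption has finite-dimensional total cohomology.
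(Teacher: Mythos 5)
Your argument follows the paper's proof in essence: both bound $\dmsupp(\RR a_*F)$ via the direct-image microsupport estimate and the transversality of $\ol\Lambda$ with $0_M$ to place it over finitely many heights $t_1,\dots,t_N$, note that $(\RR a_*F)_t\simeq 0$ for $t\ll 0$, and then induct over the sorted $t_k$'s, with the finite-dimensionality of each transition controlled by a type. The only divergence is in how that finite-dimensionality is justified: the paper cites~\cite[Cor.~7.5.12]{KS90} to conclude directly that $\RR a_*F$ is of finite type at each point of $\dmsupp(\RR a_*F)$, whereas you sketch the underlying microlocal Morse localization identifying the transition cone with the types of $F$ at the critical points -- which is precisely the content of that corollary, so your reconstruction and the paper's citation amount to the same step.
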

\begin{proof}
  We set $G = \RR a_*F$.  Then $\dmsupp(G)$ is contained in the projection to
  $T^*\R$ of $L = \Lambda \cap (0_M \times T^*\R)$.  Since $\ol \Lambda$ meets $0_M$
  transversely, the intersection defining $L$ is also transverse and $L$ is a finite
  family of half-lines; this family is in bijection with $\ol \Lambda \cap 0_M$.
  Hence $\dmsupp(G) = \bigsqcup_{k=1}^N \{t_k\} \times \{\tau>0\}$, for some
  $N$. Moreover $G$ is of finite type by~\cite[Cor.~7.5.12]{KS90} at all points of
  $\dmsupp(G)$ (the type may vary since $\dmsupp(G)$ is not connected).

  In particular $G$ is constant on the intervals of
  $\R \setminus \{t_1,\ldots, t_N\}$ and it only remains to check that the stalks
  $G_t$ are finite dimensional. We have $G_t \simeq 0$ for $t\ll 0$.  Assuming the
  $t_k$'s are ordered, for $t\in [t_{k-1},t_k[$ and $s \in [t_k,t_{k+1}[$, we have a
  natural morphism $G_t \to G_s$ whose cone is the type of $G$ at $(t_k;1)$.  Hence
  $G_s$ is finite dimensional if $G_t$ is.  By induction we see that all stalks are
  finite dimensional.
\end{proof}

\begin{remark}\label{rem:position_generique}
  We give a series of remarks to check that we can restrict ourselves to the transverse case when
  we want to bound the spectral invariants.
  
\noindent  (1) For any given compact exact Lagrangian submanifold $\ol \Lambda$ of $T^*M$, we can find a
  compactly supported Hamiltonian isotopy $\phi^t$, $t\in [0,\varepsilon[$, such that $\phi^t(\ol
  \Lambda)$ meets $0_M$ transversely for $t\not=0$.

  \smallskip\noindent (2) We can lift the $\phi^t(\ol \Lambda)$'s into a family of conic Lagrangian
  $\Lambda_t$, $t\in [0,\varepsilon[$.  To a given $F \in \Der_{\Lambda,+}(\ak_{M\times\R})$ we can
  then associate a family $F_t \in \Der_{\Lambda_t,+}(\ak_{M\times\R})$ by~\cite{GKS12}.  These
  $F_t$ tend to $F$ with respect to a distance induced by the morphisms $\tau_c$
  of~\eqref{eq:deftau}, namely, there exist $a>0$ and morphisms $T_{-a t *}F_t \to[u_t] F \to[v_t]
  T_{a t *}F_t$ such that
  \begin{equation}
    \label{eq:convergenceFt}
v_t \circ u_t = \tau_{2a t}(T_{-a t *}F_t), \qquad    
T_{a t *}(u_t) \circ v_t = \tau_{2a t}(F).
\end{equation}
For the existence of $u_t$ and $v_t$ we choose $a$ big enough so that $t \mapsto \tilde\phi^{-t}
\circ T_{a t}$ and $t \mapsto T_{a t} \circ \tilde\phi^t$ are non negative isotopies, where
$\tilde\phi^t$ is a homogeneous lift of $\phi^t$ and we write abusively $T_c(x,t;\xi,\tau) =
(x,t+c;\xi,\tau)$, and we apply~\cite[Prop.~4.8]{GKS12}.

\smallskip\noindent (3) We have $(F_t)_+ \simeq F_+$ for all $t$. We can deduce
from~\eqref{eq:convergenceFt} that, for any
$\alpha \in H^\star(M;F_+) \simeq H^\star(M;(F_t)_+)$, the function
$t \mapsto c(\alpha, F_t)$ is continuous.
\end{remark}

\subsection{Propagation and torsion}

Now we assume $M$ is endowed with a metric. We denote by $||\cdot||_x$ (or
$||\cdot||$) the norms induced on the tangent and cotangent spaces $T_xM$, $T^*_xM$.
For $\alpha>0$ we define the conic subset of $T^*(M\times \R)$:
\begin{equation}
  \label{eq:defCalpha}
  C_\alpha = \{(x,t;\xi,\tau);\; \tau \geq \alpha\, ||\xi|| \} .
\end{equation}

\begin{lemma}\label{lem:SS_hom_etoile}
  Let $\alpha>0$ and $F, G \in \Der(\ak_{M\times\R})$ such that $\msupp(F)$ and
  $\msupp(G)$ are contained in $C_\alpha$.

  \suiv{(i)} Let $i \colon N \to M$ be the inclusion of a submanifold and
  $i_+ = i \times \id_\R$.  Then $\msupp(i_+^{-1}F) \subset C_\alpha$, where
  $C_\alpha$ is defined on $N\times\R$ using the induced metric. We also have
  $$
  i_+^{-1} \hom^*(F,G) \simeq \hom^*(i_+^{-1}F, i_+^{-1}G) .
  $$

\suiv{(ii)}  $\msupp( \hom^*(F,G)) \subset C_{\alpha/2}$.
\end{lemma}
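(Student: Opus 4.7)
The plan is as follows. For part (i), the microsupport bound uses the pullback estimate from \eqref{eq:borne-msupp} applied to the closed embedding $i_+ = i \times \id_\R$: the cotangent map $(i_+)_d$ restricts covectors from $T^*_xM$ to $T^*_xN$ on the $M$-factor and is the identity in the $\R$-direction. If $(x,t;\xi,\tau) \in \msupp(F) \subset C_\alpha$ contributes $(x, t; \xi|_{T_xN}, \tau)$ to $\msupp(i_+^{-1}F)$, the inequality $||\xi|_{T_xN}|| \le ||\xi||$ for the dual norm gives $\tau \ge \alpha\,||\xi|_{T_xN}||$, so $\msupp(i_+^{-1}F) \subset C_\alpha$ on $N \times \R$.

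For the isomorphism $i_+^{-1}\hom^*(F,G) \simeq \hom^*(i_+^{-1}F, i_+^{-1}G)$, I would use base change and commutation of pullback with $\rhom$ in the Cartesian square with horizontal maps $i_2 = i \times \id_{\R^2}$ and $i_+$, and vertical maps $q_1^N, q_1^M$. Both compatibilities follow from the non-characteristic condition of $i_2$ on $H = \rhom(q_2^{-1}F, s^!G)$. A direct computation using \eqref{eq:borne-msupp} shows any point in $\msupp(H)$ is of the form $(x,t,t'; -\xi_F + \xi_G, \tau_G, \tau_G - \tau_F)$ with $(x,t';\xi_F,\tau_F) \in \msupp(F)$ and $(x,t+t';\xi_G,\tau_G) \in \msupp(G)$. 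If this lies in the conormal to $N \times \R^2$ then $\tau_G = \tau_G - \tau_F = 0$, whence $\tau_F = \tau_G = 0$, and the $C_\alpha$ bound forces $\xi_F = \xi_G = 0$; so the $M$-covector vanishes. Hence $i_2$ is non-characteristic on $H$, which justifies the base change $i_+^{-1}\RR q_{1*} H \simeq \RR q_{1*}^N i_2^{-1}H$ and the isomorphism $i_2^{-1}\rhom(q_2^{-1}F, s^!G) \simeq \rhom(i_2^{-1}q_2^{-1}F, i_2^{-1}s^!G)$ via the non-characteristic analogs of Propositions 5.4.13--14 of \cite{KS90}. The remaining Cartesian identities $i_2^{-1}q_2^{-1}F \simeq q_2^{N,-1}i_+^{-1}F$ and $i_2^{-1}s^!G \simeq (s^N)^!\,i_+^{-1}G$ (the latter because $s$ is a submersion of relative dimension $1$, so $s^!\simeq s^{-1}[1]$ up to orientation) then assemble into the desired isomorphism.

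For part (ii), the content is a direct microsupport calculation. By \eqref{eq:borne-msupp} any point $(x,t,t';\eta,\sigma,\sigma') \in \msupp(H)$ takes the form $(-\xi_F + \xi_G, \tau_G, \tau_G - \tau_F)$ as above. The estimate for $\RR q_{1*}$ requires $\sigma' = 0$, i.e.\ $\tau_F = \tau_G =: \sigma$. The two $C_\alpha$-bounds $\sigma = \tau_F \ge \alpha\,||\xi_F||$ and $\sigma = \tau_G \ge \alpha\,||\xi_G||$ combine via the triangle inequality to give
\begin{equation*}
2\sigma \;\ge\; \alpha\bigl(||\xi_F|| + ||\xi_G||\bigr) \;\ge\; \alpha\,||{-\xi_F + \xi_G}|| \;=\; \alpha\,||\eta||,
\end{equation*}
so $\sigma \ge (\alpha/2)\,||\eta||$ and $\msupp(\hom^*(F,G)) \subset C_{\alpha/2}$.

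The main technical point is that $q_1$ is not proper on $\supp(H)$, so applying \eqref{eq:borne-msupp} to $\RR q_{1*}H$ strictly requires additional justification (for instance via a microlocal cut-off in the $t'$-direction, or by checking that the two bounds on $\tau_F, \tau_G$ suffice to control the direct image in the relevant cone); once this is in place, the factor $1/2$ is forced by the relation $\sigma = \tau_F = \tau_G$ that defines the projected microsupport.
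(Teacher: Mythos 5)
Your microsupport computations are correct and match the numerology in the statement: the formula for $\msupp(H)$ with $H = \rhom(q_2^{-1}F, s^!G)$, the check that the conormal condition $\tau_G = \tau_G-\tau_F = 0$ together with the $C_\alpha$ bound kills the $M$-covector, and the triangle inequality giving the factor $1/2$ in (ii) are all exactly right. The first part of (i) is also correct (dual-norm monotonicity under restriction). However, there is a genuine gap, and it is not confined to the place where you flag it.

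The problem is the claim that ``both compatibilities follow from the non-characteristic condition of $i_2$ on $H$.'' For the $\rhom$-commutation this is essentially fine, but for the base change $i_+^{-1}\RR q_{1*}H \simeq \RR q_{1*}^N\,i_2^{-1}H$ it is false: $q_1$ is not proper on $\supp(H)$, so proper base change does not apply, and since $i_+$ is an embedding rather than a submersion, smooth base change does not apply either. Non-characteristicity of $i_2$ does \emph{not} repair this. A concrete counterexample: take $M=\R$, $N=\{0\}$, and $H=\ak_Z$ on $M\times\R^2$ (coordinates $(x,t,t')$) with $Z=\{x>0,\ t'\geq 1/x\}$, a closed set. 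Then $\supp(H)$ avoids $\{x=0\}$, so $i_2$ is trivially non-characteristic for $H$ and $\RR q_{1*}^N\,i_2^{-1}H = 0$; yet the stalk of $\RR q_{1*}H$ at $(0,t_0)$ is the colimit of $\rsect\bigl(\{0<x<\varepsilon,\ t'\geq 1/x\}\times\, ]t_0-\varepsilon,t_0+\varepsilon[\,;\ak\bigr)\simeq\ak$, which is nonzero. The same non-properness of $q_1$ is also what blocks a naive application of \eqref{eq:borne-msupp} in part (ii), which you correctly flag; but it equally invalidates the justification you give for the base change in (i). Note also that the cone condition $\tau\geq\alpha||\xi||$ with $\alpha>0$ does not restore properness: already $F=G=\ak_{M\times[0,\infty[}$ gives $\supp(H)$ unbounded in $t'$ over a fixed $(x,t)$.

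To close the gap one needs the specific structure of the Tamarkin-type situation, which is what the cited results~\cite[Prop.~4.13, Cor.~4.15]{GS14} provide: there the half-line cone condition on the $\tau$-covectors is used (via the projector $P$ and/or a compactification in the $t'$-variable) to tame the behavior of $q_1$ at infinity, after which the microsupport estimates and base change become legitimate. Without that input, the argument as written proves neither the isomorphism in (i) nor the bound in (ii). I would suggest either importing those GS14 lemmas, or explicitly carrying out a cut-off argument in $t'$ that reduces to the proper case while keeping track of the $\tau\geq\alpha||\xi||$ condition; the computational heart of your proof would then go through unchanged.
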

\begin{proof}
  The first part of~(i) follows from~\eqref{eq:borne-msupp} and the second part
  from~\cite[Cor.~4.15]{GS14}.  The bound~(ii) is a particular case
  of~\cite[Prop.~4.13]{GS14}.
\end{proof}

\begin{proposition}\label{prop:annulation_section}
  Let $F \in \Der(\ak_{M\times\R})$ such that $\msupp(F)$ is contained in $C_\alpha$.
  Let $x_0\in M$ and $R>0$ be given with $R$ less than the injectivity radius of
  $M$. Let $B_R$ be the open ball centered at $x_0$ of radius $R$.  For
  $a<b' \leq b \leq 0$ we have the natural restriction morphisms
\begin{align*}
r_{x_0,]a,b[} &\colon H^0(B_R \times \mo]a,0[; F) \to
  H^0(\{x_0\}\times \mo]a,b[; F|_{\{x_0\}\times\R}), \\
  r_{]a,b'[} &\colon H^0(B_R \times \mo]a,0[; F) \to H^0(B_R \times \mo]a,b'[; F) .
\end{align*}
We assume that $b' < b-\alpha^{-1} R$. Then $\ker(r_{x_0,]a,b[}) \subset \ker(r_{]a,b'[})$.
\end{proposition}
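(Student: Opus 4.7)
The plan is to interpolate between the line $\{x_0\} \times \mo]a, b\mc[$ and the cylinder $B_R \times \mo]a, b'\mc[$ via an open set $W \subset B_R \times \mo]a, 0\mc[$, and to use microlocal propagation to show that sections of $F$ on $W$ are uniquely determined by their restriction to the line.

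Since the inequality $b' < b - \alpha^{-1} R$ is strict, first pick $\epsilon > 0$ with $b' < b - \alpha^{-1}(1+\epsilon) R$, and put $\phi(x, t) = t + \alpha^{-1}(1+\epsilon) d(x, x_0)$ together with
\[
W = \bigl\{(x, t) \in B_R \times \mo]a, 0\mc[ : \phi(x, t) < b\bigr\}.
\]
Then $W$ is open in $M \times \R$, and $B_R \times \mo]a, b'\mc[ \subset W$ by the choice of $\epsilon$; moreover $W \cap (\{x_0\} \times \R) = \{x_0\} \times \mo]a, b\mc[$, since $\phi(x_0, t) = t$ and $b \leq 0$. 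The restriction from $B_R \times \mo]a, 0\mc[$ to $\{x_0\} \times \mo]a, b\mc[$ factors through restriction to $W$; it therefore suffices to show that the restriction map
\[
H^0(W; F) \longrightarrow H^0\bigl(\{x_0\} \times \mo]a, b\mc[ ;\, F|_{\{x_0\} \times \R}\bigr)
\]
is injective, whereupon $\sigma|_W = 0$, and hence $r_{\mo]a, b'\mc[}(\sigma) = 0$ via $B_R \times \mo]a, b'\mc[ \subset W$.

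This injectivity is a consequence of Kashiwara--Schapira non-characteristic propagation~\cite[\S5.4]{KS90}. Away from the axis $\{x_0\} \times \R$, $\phi$ is $C^1$ with $d\phi = \bigl(\alpha^{-1}(1+\epsilon)\, \nabla_x d(\cdot, x_0),\ 1\bigr)$; using that $||\nabla_x d(\cdot, x_0)|| = 1$ inside the injectivity radius, one computes $\tau_{d\phi} = 1 < \alpha \cdot \alpha^{-1}(1+\epsilon) = \alpha\, ||\xi_{d\phi}||$, so $d\phi(x, t) \notin C_\alpha$ and in particular $d\phi(x, t) \notin \msupp(F)$. Sweeping the family of sublevel sets $\{\phi < s\}$ as $s$ decreases from $b$ toward the infimum of $\phi$ on $W$ yields that the derived restriction
\[
\rsect(W; F) \isoto \rsect\bigl(\{x_0\} \times \mo]a, b\mc[;\, F|_{\{x_0\} \times \R}\bigr)
\]
is an isomorphism, hence injective on $H^0$.

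The main obstacle I expect is the non-smoothness of $\phi$ on the axis $\{x_0\} \times \R$, where $d(\cdot, x_0)$ fails to be differentiable. Since this axis is the target of the retraction and not a level set crossed during the sweep, the plan is to circumvent the difficulty by regularizing $d(\cdot, x_0)$ to a smooth $d_\eta$ with $||\nabla d_\eta|| \leq 1$, building a small buffer into $\epsilon$ so that the strict non-characteristic inequality survives the regularization, and passing to the limit $\eta \to 0$. The geometric content --- namely that sections propagate from the ball $B_R$ to its central axis with a time loss of $\alpha^{-1} R$ --- is transparent from the microsupport inequality, and this regularization step is the only delicate point in carrying the argument out rigorously.
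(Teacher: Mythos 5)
Your geometric setup is close in spirit to the paper's --- both use a cone of slope just above $\alpha^{-1}$ emanating from $(x_0,b)$ --- but the propagation step as you've written it doesn't go through, and the issue is more serious than the regularization of $d(\cdot,x_0)$ that you flag.

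The gap is in the Morse sweep. You propose to sweep the sublevel sets $\{\phi<s\}$ with $s$ decreasing from $b$ toward $a = \inf_W \phi$, and to read off from the microlocal Morse lemma an isomorphism
\[
\rsect(W;F) \isoto \rsect\bigl(\{x_0\}\times\mo]a,b[;\, F|_{\{x_0\}\times\R}\bigr).
\]
But as $s$ decreases, $\{\phi<s\}\cap W$ is a cone whose apex slides down the axis and shrinks to the empty set as $s\to a^+$; it does \emph{not} converge to a system of neighborhoods of the whole axis segment $\{x_0\}\times\mo]a,b[$. So even formally, the sweep on sublevel sets of this fixed $\phi$ cannot produce the restriction to the axis. (The right-hand side is a filtered colimit over \emph{open neighborhoods} of the segment, and those are not sublevel sets of $\phi$.) On top of that, the Morse lemma of \cite[Cor.~5.4.19]{KS90} needs the Morse function to be proper on the support of the sheaf, which fails for $F|_W$ on the open set $W$; one cannot just invoke non-characteristicity of $d\phi$ in the interior without dealing with the boundary of the domain.

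The paper sidesteps both difficulties as follows. First it uses the colimit description of the costalk: since $s$ restricts to zero on the axis, it already vanishes on \emph{some} open neighborhood $U$ of $\{x_0\}\times\mo]a,b[$, and one picks $\delta>0$ so that the thin cone $(B_\delta\times\mo]a',0[)\cap\mathcal{C}_\varepsilon$ lies inside $U$. Then, rather than sweeping in the time direction with your $\phi$, it sweeps in the \emph{radial} direction with $\psi(x,t)=d(x_0,x)$, applied not to $F$ but to the cut-off sheaf $\rsect_V(F)$ with $V=(M\times\mo]a',0[)\cap\mathcal{C}_\varepsilon$; this makes $\psi$ proper on the support and turns the boundary of the cone into a controllable contribution to the microsupport (via $\msupp(\ak_V)^a+\msupp(F)$). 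The non-characteristicity check is then done on $\rsect_V(F)$ at boundary points of the cone, which is exactly where your computation of $d\phi\notin C_\alpha$ lives, but packaged so that the Morse lemma legitimately applies. You would need to restructure your argument along these lines --- isolate the neighborhood $U$, cut off with $\rsect_V$, and sweep radially --- for the proof to close.
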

\begin{proof} \newcommand{\Cbase}{{\mathcal C}} (i) We take
  $s\in \ker(r_{x_0,]a,b[})$ and prove that $s|_{B_R \times \mo]a,b'[}$ vanishes.  By
  Lemma~\ref{lem:propag1} the choice of $a$ is not relevant since, for example,
  $H^0(B_R \times \mo]a,b'[; F) \isoto H^0(B_R \times \mo]a',b'[; F)$ for any
  $a\leq a' <b'$ (at this point we could even take $a=-\infty$, but it is better to
  have a proper support to apply the Morse result below).  We choose $a'$ with
  $a<a'<b$.  Hence it is enough to see that $s|_{B_R \times \mo]a',b'[}$ vanishes.

  \medskip\noindent(ii) We define $\varphi(x) = d_M(x_0,x)$ on $M$ and we choose $0<\beta < \alpha$
  such that $b' < b - \beta^{-1}R$. For $\varepsilon >0$, we define the open ``cone''
  $\Cbase_\varepsilon = \{(x,t)\in B_R\times \R$; $t < b -\varepsilon - \beta^{-1} \varphi(x)\}$
  (see Fig.~\ref{fig:cone-etc}).  For $\varepsilon$ small enough we have $B_R \times \mo]a',b'\mc[
  \subset \Cbase_\varepsilon$ and it is enough to see that the restriction of $s$ to $(B_R \times
  \mo]a',0\mc[) \cap \Cbase_\varepsilon$ vanishes.

\begin{figure}[ht]
\begin{tikzpicture}[xscale=1.2]
\def\rayR{1.4} \def\rayd{.15} \def\rayb{-.9} \def\rayc{2}

\draw [dashed] (-\rayR,-4.5) rectangle (\rayR,0);
\draw [dashed] [fill=gray!10]
    (0,-.2) .. controls +(0,-.5) and +(0,.5) ..
    (-.5,-2) .. controls +(0,-.5) and +(-.2,.5) ..
    (0,-4.5) .. controls +(0.1,.5) and +(0,-.5) ..
    (.5,-3) .. controls +(0,1.5) and +(.2,-.5) .. 
    (0,-.2) ;
\draw [dashed] (-\rayd,-3.5) rectangle (\rayd,0);
\draw [dashed] (0,\rayb) -- +(\rayc,-\rayc);
\draw [dashed] (0,\rayb) -- +(-\rayc,-\rayc);
\begin{scope}
  \clip  (0,\rayb) -- +(\rayc,-\rayc) -- (\rayc,-5.4) -- (-\rayc,-5.4) -- (-\rayc,-\rayc+\rayb) -- cycle;
  \draw[rotate=45, step=5mm, color=gray!40] (-6,-6) grid (0,0);
\end{scope}
\draw [->] (0,-6) node[above right] {$\{x_0\}\times\R$} -- (0,1);
\draw (-2,0) -- (-\rayR,0 )  [->] (\rayR,0)  -- (2,0);
\draw [dotted] (0,-.2) -- +(3,0) node[right] {$b$};
\draw [dotted] (0,\rayb) -- +(3,0)  node[right] {$b-\varepsilon$};
\draw [dotted] (0,-3.5) -- +(3,0)  node[right] {$a'$};
\draw [dashed] (-\rayR,-2.5)  -- (\rayR,-2.5) ; 
\draw [dotted] (\rayR,-2.5) -- +(1.6,0)  node[right] {$b'$};
\draw [dotted] (\rayR,-4.5) -- +(1.6,0)  node[right] {$a$};
\node [above] at (\rayR,0) {$R$};
\node [above] at (\rayd,0) {$\delta$};

\node at (7,-2) {\begin{minipage}[t]{3.7cm} \setlength{\baselineskip}{8mm}
$U$ is shaded \\  $\Cbase_\varepsilon$ is striped\\
$\{x_0\}\times \mo]a,b\mc[ \subset U$ \\
$B_R \times \mo]-\infty,b'\mc[ \subset \Cbase_\varepsilon$ \\
$(B_\delta \times \mo]a',0\mc[) \cap \Cbase_\varepsilon \subset U$
\end{minipage}};
\end{tikzpicture}
\caption{}   \label{fig:cone-etc}
\end{figure}
  
Since $r_{x_0,]a,b[}(s) = 0$, there exists an open neighborhood $U$ of $\{x_0\}\times \mo]a,b[$ in
$B_R \times\R$ such that $s|_U$ vanishes (indeed $H^0(\{x_0\}\times \mo]a,b[; F|_{\{x_0\}\times\R})
\simeq \varinjlim_U H^0(U;F)$ where $U$ runs over such open neighborhoods --
see~\cite[Rem.~2.6.9]{KS90}).  Now we can find $\delta>0$ such that $U$ contains $(B_\delta \times
\mo]a',0\mc[) \cap \Cbase_\varepsilon$.  We visualize the restriction morphisms between the subsets
introduced so far in the diagram
  $$
  \begin{tikzcd}
    H^0(B_R \times \mo]a,0[; F)\ar[r]  \ar[rrr, bend left=20, "r_{x_0,]a,b[}"]
    \ar[d, "r_1"] \ar[dd, start anchor = south west, end anchor = north west, bend right=60,  "r_{]a',b'[}"']
    &[-10mm] H^0(U;F) \ar[dr] \ar[rr]
    &[-10mm] &[-50mm] H^0(\{x_0\}\times \mo]a,b[; F|_{\{x_0\}\times\R})\\
    H^0((B_R \times \mo]a',0\mc[) \cap \Cbase_\varepsilon; F) \ar[rr, "r_2"] \ar[d]
   && H^0((B_\delta \times \mo]a',0\mc[) \cap \Cbase_\varepsilon; F) \\
    H^0(B_R \times \mo]a',b'[; F) .    
  \end{tikzcd}
  $$
  We want to prove that $r_{]a',b'[}(s) = 0$. It is enough to see that $r_1(s) =0$.
  Since $s|_U = 0$, we have $r_2 \circ r_1(s) = 0$ and it is enough to see that $r_2$
  is an isomorphism.  Setting $V = (M \times \mo]a',0\mc[) \cap \Cbase_\varepsilon$
  and $\psi(x,t) = \varphi(x)$ we have
  $$
  H^0((B_{R'} \times \mo]a',0\mc[) \cap \Cbase_\varepsilon; F) \simeq
  H^0(\psi^{-1}(\mo]-\infty,R'[); \rsect_V(F)) 
  $$
  for any $R'$. By the microlocal Morse result in~\cite[Cor.~5.4.19]{KS90}, the
  restriction from $\psi^{-1}(\mo]-\infty,R[)$ to $\psi^{-1}(\mo]-\infty,\delta{}[)$
  is an isomorphism if
  \begin{equation}\label{eq:hypMorse}
    (x,t; d\varphi(x),0) \not\in \msupp(\rsect_V(F))
\qquad    \text{for all $x\in B_R \setminus B_\delta$.}
\end{equation}

\medskip\noindent(iii) We check~\eqref{eq:hypMorse} which concludes the proof of the
proposition.  We estimate $\msupp(\rsect_V(F))$ using~\eqref{eq:borne-msupp} (we
recall that $\rsect_V(F) \simeq \rhom(\ak_V,F)$).  When we restrict over
$(B_R \setminus B_\delta) \times \R$, the boundary of $V$ consists of two smooth
disjoint hypersurfaces, $M\times \{a'\}$ and $\partial \Cbase_\varepsilon$.  We only
check~\eqref{eq:hypMorse} at a point $(x,t) \in \partial \Cbase_\varepsilon$ (the
case $(x,t) \in V$ is obvious since $\rsect_V(F) \simeq F$ near such a point and the
case $(x,t) \in M\times \{a'\}$ is done in the same steps but easier).  Let $A$, $B$
be the intersection of $\msupp(\ak_V)$, $\msupp(F)$ with the fiber at $(x,t)$.  Then
$A = \R_{\geq 0} \cdot (d\varphi(x), \beta)$ and
$B \subset \{ \tau \geq \alpha\, ||\xi||_x \}$.  Since $|| d\varphi(x) ||_x =1$ and
$\beta < \alpha$, we have $A \cap B = \{0\}$ and, by~\eqref{eq:borne-msupp}, we
obtain the bound $A^a+B$ for $\msupp(\rsect_V(F))$ at $(x,t)$.  Hence,
if~\eqref{eq:hypMorse} does not hold, there exist $\lambda\geq 0$ and
$(\xi,\tau) \in B$ such that
$(d\varphi(x),0) = -\lambda (d\varphi(x), \beta) + (\xi,\tau)$, which gives
$\tau = \lambda\beta$, $\xi = (1+\lambda) d\varphi(x)$ and then
$\lambda\beta = \tau \geq \alpha||\xi||_x = \alpha(1+\lambda)$.  This contradicts
$\beta<\alpha$ and proves~\eqref{eq:hypMorse}.
\end{proof}

In the following corollary we make use of the remark: for any morphism
$i\colon N \to M$ (for example, $i$ is the inclusion of a submanifold or an open
subset) and $i_+ = i \times \id_\R$, the inverse image $i_+^{-1}$ sends
$\Der_{\tau\geq 0}(\ak_{M\times\R})$ to $\Der_{\tau\geq 0}(\ak_{N\times\R})$ and we
have $i_+^{-1} \circ T_{c*} = T_{c*} \circ i_+^{-1}$ and
$$
\tau_c(i_+^{-1}F) = i_+^{-1}(\tau_c(F)) .
$$

\begin{corollary}\label{cor:annulationtauc}
  Let $F \in \Der(\ak_{M\times\R})$.  We assume that $\msupp(F)$ is contained in
  $C_\alpha$ and that the morphism $\tau_c(F|_{\{x_0\} \times \R})$ vanishes for some
  $x_0\in M$ and some $c\geq0$.  Let $R>0$ be given, less than the injectivity radius
  of $M$. Let $B$ be the open ball centered at $x_0$ of radius $R$.  Then the
  morphism $\tau_{c'}(F|_{B \times \R})$ vanishes for $c'> c+2 \alpha^{-1} R$.
\end{corollary}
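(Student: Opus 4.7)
My plan is to interpret the family of Tamarkin morphisms $\tau_c(F|_{B\times\R})$, $c \geq 0$, as restrictions to progressively smaller open sets of a single ``universal'' section defined on the whole strip $B\times\mo]-\infty, 0\mc[$, and then apply Proposition~\ref{prop:annulation_section} to this section in order to propagate the vanishing from the fiber $\{x_0\}\times\R$ to the ball $B$.

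Set $H := \hom^*(F, F)$; by Lemma~\ref{lem:SS_hom_etoile}(ii), $\msupp(H) \subset C_{\alpha/2}$, so in particular $H[-1]$ lies in $\Der_{\tau\geq 0}$. Composing Lemma~\ref{lem:section_hom_etoile} with Lemma~\ref{lem:propag1}, one obtains, for any open $U \subset M$ and any $a < b \leq 0$, a natural isomorphism
$$
\Phi_{U, b} \colon H^0(U\times\mo]a, b\mc[; H[-1]|_{U\times\R}) \isoto \Hom(F|_{U\times\R}, T_{-b*}F|_{U\times\R}).
$$
The first step is to verify that $\Phi$ is functorial in $U$ (for open inclusions) and in $b$ (for interval restrictions). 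The crucial compatibility is that the interval restriction $H^0(U\times\mo]a, 0\mc[; H[-1]) \to H^0(U\times\mo]a, -c\mc[; H[-1])$ must correspond, via $\Phi_{U, 0}$ and $\Phi_{U, -c}$, to the post-composition map $\Hom(F|_{U\times\R}, F|_{U\times\R}) \to \Hom(F|_{U\times\R}, T_{c*}F|_{U\times\R})$, $\phi \mapsto \tau_c(F|_{U\times\R}) \circ \phi$.

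Granting this, let $\iota \in H^0(B\times\mo]a, 0\mc[; H[-1]|_{B\times\R})$ be the section associated via $\Phi_{B, 0}$ to $\id_{F|_{B\times\R}}$. By the naturality claim, its restriction to $B\times\mo]a, -c'\mc[$ represents $\tau_{c'}(F|_{B\times\R})$, while the combination of naturality in $U$ and in $b$ shows that its restriction to $\{x_0\}\times\mo]a, -c\mc[$ represents $\tau_c(F|_{\{x_0\}\times\R})$, which vanishes by hypothesis. I then apply Proposition~\ref{prop:annulation_section} to $H[-1]$ with $\alpha$ there replaced by $\alpha/2$: taking $b = -c \leq 0$ and, for any $c' > c + 2\alpha^{-1}R$, setting $b' = -c'$, the inequality $b' < b - 2\alpha^{-1}R = b - (\alpha/2)^{-1}R$ is satisfied, so the vanishing of $\iota$ on $\{x_0\}\times\mo]a, -c\mc[$ propagates to vanishing on $B\times\mo]a, -c'\mc[$, which is precisely $\tau_{c'}(F|_{B\times\R}) = 0$.

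The main obstacle I expect is the naturality claim of the previous paragraph, that interval restriction on sheaf cohomology, transported through the degree shift in Lemma~\ref{lem:propag1}, really does correspond to post-composition by the Tamarkin morphism under the identifications of Lemma~\ref{lem:section_hom_etoile}. This reduces to a careful diagram chase combining the naturality in $b$ of the distinguished triangle $\ak_{\mo]a, b\mc[} \to \ak_{\mo]a, b\mc]} \to \ak_{\{b\}} \to$ used in the proof of Lemma~\ref{lem:propag1} with the compatibility of the $*$--$\hom^*$ adjunction with translation underlying Lemma~\ref{lem:section_hom_etoile}; no new microlocal input is required beyond what has already been developed in the preceding subsection.
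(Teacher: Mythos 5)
Your proposal is correct and reproduces the paper's argument: the paper also sets $F_1 = \hom^*(F,F)[-1]$, interprets $\id_F$ as a section of $F_1$ over $M\times\mo]a,0\mc[$ via Lemmas~\ref{lem:section_hom_etoile} and~\ref{lem:propag1}, and feeds it into Proposition~\ref{prop:annulation_section} using the bound $\msupp(F_1)\subset C_{\alpha/2}$ from Lemma~\ref{lem:SS_hom_etoile}. The naturality in $U$ and in $b$ that you flag as the remaining obstacle is indeed used in the paper but, as you anticipate, is treated as routine and not spelled out.
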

\begin{proof}
  We set $F_1 = \hom^*( F, F)[-1]$.  By Lemmas~\ref{lem:section_hom_etoile}
  and~\ref{lem:propag1}, the morphism $\id_F$ corresponds to a section
  $s \in H^1_{M\times \{0\}}(M\times \R; F_1) \simeq H^0(M \times \mo]a,0[; F_1)$,
  for any $a<0$, and we have to prove that the image of $s$ in
  $H^1_{B\times \{-c'\}}(B\times \R; F_1) \simeq H^0(B \times \mo]a,-c'[; F_1)$
  vanishes (assuming $a<-c'$).

  Lemma~\ref{lem:SS_hom_etoile} gives $\msupp(F_1) \subset C_{\alpha/2}$ and
  $F_1|_{\{x_0\}\times\R} \simeq \hom^*(F|_{\{x_0\}\times\R},
  F|_{\{x_0\}\times\R})[-1]$.  The hypotheses say that the image of $s$ in
  $H^0( \{x_0\} \times \mo]a,-c[; F_1|_{\{x_0\}\times\R})$ is zero.  Now the result
  follows from Proposition~\ref{prop:annulation_section}.
\end{proof}

\subsection{Barcodes computations}\label{sec:barcode}

We refer to the discussion before~\eqref{eq:dec_barcode} for the definition of barcode.

We want to compute $\hom^*(F,G)$ where $F,G$ are sums of constant sheaves on intervals $[a,b[$ with
$b$ being possibly infinite.  We give the following explicit formulas.

\begin{lemma}\label{lem:hom_etoile_bc} 
  Let $a,b,c,d$ be real numbers. Then
  \begin{align*}
  \hom^*(\ak_{[a,b[},\ak_{[c,d[})
  &\simeq \ak_{[c-b,\min(c-a,d-b)[}[1] \oplus \ak_{[\max(c-a,d-b),d-a[}, \\
\hom^*(\ak_{[a,\infty[},\ak_{[c,\infty[})&\simeq \ak_{]-\infty,c-a[}[1],\\
\hom^*(\ak_{[a,b[},\ak_{[c,\infty[})&\simeq \ak_{[c-b,c-a[}[1],\\
\hom^*(\ak_{[a,\infty[},\ak_{[c,d[})&\simeq \ak_{]c-a,d-a[} ,
  \end{align*}
  where $\ak_{[\alpha,\beta[} = 0$ if $\alpha \geq \beta$.
\end{lemma}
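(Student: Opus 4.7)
The plan is to compute $\hom^*(\ak_I, \ak_J)$ by translation equivariance, reduction via short exact sequences, and a costalk verification using Lemma~\ref{lem:section_hom_etoile}.

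First, I would establish translation equivariance of $\hom^*$: from the defining adjunction $\RHom(H * F, G) \simeq \RHom(H, \hom^*(F, G))$ combined with the identity $\ak_{\{\alpha\}} * F = T_{\alpha*}F$, one deduces $\hom^*(T_{\alpha*}F, T_{\gamma*}G) \simeq T_{(\gamma - \alpha)*} \hom^*(F, G)$. This normalizes the endpoints, so it suffices to treat representative cases in each formula.

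Next, I would take the second formula $\hom^*(\ak_{[a,\infty[}, \ak_{[c,\infty[}) \simeq \ak_{]-\infty, c-a[}[1]$ as the base case and verify it via costalks. By Lemma~\ref{lem:section_hom_etoile}, the costalk at $-t$ equals $\RHom(\ak_{[a,\infty[}, \ak_{[c+t,\infty[})$; using the triangle $\ak_{]-\infty, a[} \to \ak_\R \to \ak_{[a, \infty[} \xrightarrow{+1}$, this computes to $\ak$ when $c+t \geq a$ and $0$ otherwise, matching the costalks of $\ak_{]-\infty, c-a[}[1]$. Since $\hom^*(F,G)$ lies in $\Der_{\tau \geq 0}(\ak_\R)$ by Lemma~\ref{lem:SS_hom_etoile} and is constructible for our inputs, it decomposes as a barcode $\bigoplus_i \ak_{[\alpha_i, \beta_i[}[d_i]$ (Gabriel's theorem for $A_n$ quivers over a field); the costalk pattern determines this decomposition.

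For the other three formulas, I would apply $\hom^*$ to the short exact sequences
\[
0 \to \ak_{[a, b[} \to \ak_{[a, \infty[} \to \ak_{[b, \infty[} \to 0, \qquad 0 \to \ak_{[c, d[} \to \ak_{[c, \infty[} \to \ak_{[d, \infty[} \to 0,
\]
contravariantly in the first slot and covariantly in the second. For the third formula this yields
\[
\ak_{]-\infty, c-b[}[1] \to \ak_{]-\infty, c-a[}[1] \to \hom^*(\ak_{[a, b[}, \ak_{[c, \infty[}) \xrightarrow{+1}.
\]
The connecting map is identified, up to scalar, via the uniqueness statement $\RHom(\ak_{]-\infty, c-b[}, \ak_{]-\infty, c-a[}) \simeq \ak$ and is shown to be non-zero by computing one costalk where the formula predicts a non-vanishing value. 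The cone is then read off from the short exact sequence $0 \to \ak_{]-\infty, c-b[} \to \ak_{]-\infty, c-a[} \to \ak_{[c-b, c-a[} \to 0$, giving $\ak_{[c-b, c-a[}[1]$. The fourth formula is strictly analogous, and the first is obtained by applying both triangles in sequence.

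The main obstacle is the splitting of the first formula into two summands. For this I would argue that the two intervals $[c - b, \min(c - a, d - b)[$ and $[\max(c - a, d - b), d - a[$ are separated: any non-trivial extension in $\Der_{\tau \geq 0}(\ak_\R)$ would produce a single bar spanning both intervals, contradicting the costalk computation at a point strictly between them (which must vanish by the costalk formula). By uniqueness of barcode decompositions, the $\hom^*$ must therefore split as the claimed direct sum.
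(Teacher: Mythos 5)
Your approach is genuinely different from the paper's. The paper invokes the explicit formula $\hom^*(F,G)\simeq \RR s_*\rhom(q_2^{-1}i^{-1}F,q_1^!G)$ from~\cite{T08,GS14}, reduces each case to $\RR s_*(\ak_{K})[1]$ for $K$ a product of intervals in $\R^2$, and reads off the answer by computing sections of this direct image over intervals. You instead normalize by translation equivariance, verify a base case via costalks, and then produce the remaining formulas by d\'evissage along the short exact sequences $0\to\ak_{[a,b[}\to\ak_{[a,\infty[}\to\ak_{[b,\infty[}\to 0$. The d\'evissage part is sound: the connecting maps live in one-dimensional $\Hom$-spaces, and a single costalk check does pin them down; the cone of the resulting scalar is the claimed barcode. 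This is a nice alternative route and makes the role of Lemma~\ref{lem:section_hom_etoile} transparent.

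However, there is a genuine gap in the base case. You claim that for a constructible object of $\Der_{\tau\geq 0}(\ak_\R)$ ``the costalk pattern determines this decomposition.'' This is false: costalks do not determine a barcode. For instance $\ak_{[0,1[}\oplus\ak_{[1,2[}$ and $\ak_{[0,2[}$ have identical costalks $\rsect_{\{s\}}(-)$ at every $s\in\R$ (both give $\ak[-1]$ for $s\in\mo]0,2]$ and $0$ otherwise), and likewise $\ak_{]-\infty,s_0[}[1]\oplus\ak_{[s_0,c-a[}[1]$ has the same costalks as the expected answer $\ak_{]-\infty,c-a[}[1]$ for any $s_0<c-a$. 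So checking costalks confirms consistency with the stated formula but does not establish it. (Adding stalks does not help either, as the same two examples show.) To close the gap you need to actually compute $\hom^*(\ak_{[a,\infty[},\ak_{[c,\infty[})$ as a sheaf -- e.g.\ by computing both its stalks and the restriction maps between them, or, more efficiently, by reducing to $\RR s_*(\ak_{[-\infty,-a[\times[c,\infty[})[1]$ via the explicit formula as in the paper's proof -- rather than only testing it against costalks. Once the base case is established honestly, the rest of your argument goes through; you should also note somewhere that $\hom^*$ of two barcodes is again constructible (e.g.\ via a microsupport estimate as in Lemma~\ref{lem:SS_hom_etoile} plus finiteness of stalks), since you rely on Gabriel's theorem.
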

\begin{proof}
  We only prove the first two isomorphisms, the last two ones being similar to the
  second one.  Let us denote by $i\colon\R\to \R$ the involution $x\mapsto -x$ and
  $q_1,q_2\colon \R^2 \to \R$ the projections on the first and second factor
  respectively.  We use the formula
  $\hom^*(F,G) \simeq \RR s_*\rhom(q_2^{-1}i^{-1} F, q_1^!G)$ see~\cite{T08}
  or~\cite[Lem.~4.10]{GS14}.  We obtain
\begin{align*}
\hom^*(\ak_{[a,b[},\ak_{[c,d[})&= \RR s_*\rhom(q_2^{-1}i^{-1}\ak_{[a,b[},q_1^!\ak_{[c,d[})\\
                               &\simeq  \RR s_*\rhom(\ak_{\R \times ]-b,-a]},\ak_{[c,d[\times \R})[1]\\
                               &\simeq  \RR s_*\rhom(\ak_{\R \times ]-b,-a]},\rhom(\ak_{]c,d]\times \R},\ak_{\R^2}))[1]\\
                               &\simeq  \RR s_*\rhom(\ak_{\R \times ]-b,-a]}\otimes \ak_{]c,d]\times \R},\ak_{\R^2})[1]\\
                               &\simeq  \RR s_*\rhom(\ak_{ ]-b,-a]\times ]c,d]},\ak_{\R^2})[1]\\
                               &\simeq  \RR s_*(\ak_{ [-b,-a[\times [c,d[})[1]\ .
\end{align*}
We can easily compute local sections of this direct image: it amounts to computing
the cohomology of the constant sheaf on $E_I = ([-b,-a[\times [c,d[) \cap s^{-1}(I)$
where $I$ is an (open) interval of $\R$.  (To get the stalk at $t$, we choose
$I = \{t\}$ and $E_I$ is an interval which is closed, open or half-closed according
to $t$.)  This gives the first formula.

We perform a similar computation to prove the second formula:
\begin{align*}
  \hom^*(\ak_{[a,\infty[},\ak_{[c,\infty[})
  &\simeq  \RR s_*\rhom(\ak_{\R \times ]-\infty,-a]}\otimes \ak_{]c,\infty]\times\R},\ak_{\R^2})[1]\\
  &\simeq  \RR s_*\rhom(\ak_{ ]-\infty,-a]\times ]c,\infty]},\ak_{\R^2})[1]\\
  &\simeq  \RR s_*(\ak_{ [-\infty,-a[\times [c,\infty[})[1] 
\end{align*}
and we conclude by the same argument as for the first isomorphism.
\end{proof}

We now introduce a convenient bound for the spectral norm $c_+(F)-c_-(F)$ (see
Lemma~\ref{lem:c_et_v} below).

\begin{definition}\label{def:bounddepth}
  For $F \in \Der_{\tau\geq 0}(\ak_\R)$ we define its boundary depth
\begin{equation}\label{eq:betaettau}
  \beta(F)=\min\{c;\; \tau_c(F)=0 \} 
\end{equation} 
and, for $F,F' \in \Der_{\tau\geq 0}(\ak_\R)$ we define
  \begin{align*}
V(F,F') & = \hom^*(F,F') \otimes \ak_{[0,\infty[} , \\    
 v(F,F') &=\beta(V(F,F')) . 
  \end{align*}
  For a manifold $M$ and $F,F' \in \Der_{\tau\geq 0}(\ak_{M\times\R})$ we also set
  $v(F,F') = v(\RR a_*F, \RR a_*F')$, where $a\colon M\times\R \to \R$ is the
  projection.
\end{definition}

\begin{remark}
  A sheaf $F \in \Der_{\tau\geq 0}(\ak_\R)$ satisfying~\eqref{eq:cond_faiscMR} fits
  in a distinguished triangle
\begin{equation}
  \label{eq:triangle-condfaiscMR}
  F_0 \to F \to E_{[A,\infty[} \to[+1] ,
\end{equation}
where $\supp(F_0)$ is compact and $E$ is a complex of vector spaces.  If $F_1$ and
$F_2$ are either with compact support or of the form $E_{[A,\infty[}$, it is easy to
check that $V(F_1,F_2)$ is torsion (see
Definition~\eqref{def:tauc}). Using~\eqref{eq:triangle-condfaiscMR} we deduce that
$V(F,F')$ is torsion for $F,F'$ satisfying~\eqref{eq:cond_faiscMR}, hence that
$v(F,F')$ is finite.
\end{remark}

\begin{remark}
  If $F \in \Der_{\tau\geq0}(\ak_\R)$ is constructible, we have seen in the discussion
  before~\eqref{eq:dec_barcode} that $F \simeq \bigoplus_{I \in \mathcal{I}} \ak_I$, where
  $\mathcal{I}$ is a finite family of intervals $[a,b[$, $a$ and $b$ possibly infinite.  The
  boundary depth of $\mathcal{I}$ is usually defined as the longest finite interval in this
  decomposition.  When the barcode $F$ contains no infinite bar, it is torsion and the boundary
  depth of $\mathcal{I}$ coincides with $\beta(F)$.
\end{remark}

\begin{remark}\label{rem:imdir_somme_v}
  (1) It is clear from the definition that, for a morphism $f\colon M \to N$, we have
  $v(\RR(f\times\id_\R)_*(F), \RR(f\times\id_\R)_*(F')) = v(F,F')$.

  \sui(2) For $F_1, F_2 \in \Der_{\tau\geq 0}(\ak_\R)$ we have
  $V(F_1\oplus F_2, F') \simeq V(F_1, F') \oplus V(F_2, F')$. Hence
  $v(F_1,F') \leq v(F_1\oplus F_2, F')$. In the same way,
  $v(F',F_1) \leq v(F',F_1\oplus F_2)$.
\end{remark}

\begin{lemma}\label{lem:v_cone}
  Let $F_1,F_2,F,G \in \Der^{\perp,l}_{\tau\leq 0}(\ak_\R)$.  We assume that there
  exists a distinguished triangle $F_1 \to F_2 \to F \to[+1]$.  Then
  $v(F,G) \leq v(F_1,G) + v(F_2,G)$ and $v(G,F) \leq v(G,F_1) + v(G,F_2)$.
\end{lemma}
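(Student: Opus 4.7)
My plan is to apply $V(-,G)$ (resp.\ $V(G,-)$) to the distinguished triangle and invoke Lemma~\ref{lem:taucducone} after an appropriate rotation. Since $q_2^{-1}$, $\RR q_{1*}$ are triangulated and $\rhom(-,H)$ is contravariant triangulated, the functor $\hom^*(-,G)$ is contravariant triangulated in the first argument; tensoring with $\ak_{[0,\infty[}$ preserves triangles, so $V(-,G)$ is contravariant triangulated. Similarly $V(G,-)$ is covariant triangulated. A routine microsupport calculation using~\eqref{eq:borne-msupp} together with $\msupp(\ak_{[0,\infty[}) \subset \{\tau\geq 0\}$ shows that $V(-,-)$ sends $\Der_{\tau\geq 0}(\ak_\R) \times \Der_{\tau\geq 0}(\ak_\R)$ into $\Der_{\tau\geq 0}(\ak_\R)$, so that the hypotheses of Lemma~\ref{lem:taucducone} are met.

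Applying $V(-,G)$ to $F_1\to F_2 \to F \to[+1]$ and rotating once backwards, I obtain the distinguished triangle
$$V(F_1,G)[-1] \to V(F,G) \to V(F_2,G) \to[+1] .$$
Setting $a=v(F_1,G)$ and $b=v(F_2,G)$, the definition of $v$ gives $\tau_b(V(F_2,G))=0$ and $\tau_a(V(F_1,G))=0$; since $\tau_c$ is natural with respect to the shift functor, also $\tau_a(V(F_1,G)[-1])=0$. Lemma~\ref{lem:taucducone} applied to the displayed triangle then yields $\tau_{a+b}(V(F,G))=0$, hence $v(F,G) \leq v(F_1,G)+v(F_2,G)$.

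Symmetrically, applying the covariant functor $V(G,-)$ and rotating once forwards gives the distinguished triangle
$$V(G,F_2) \to V(G,F) \to V(G,F_1)[1] \to[+1] ,$$
and the same invocation of Lemma~\ref{lem:taucducone} with $a=v(G,F_2)$ and $b=v(G,F_1)$ concludes $v(G,F) \leq v(G,F_1)+v(G,F_2)$. The only modest subtlety is the bookkeeping: one must rotate the triangle so that $V(F,G)$ (resp.\ $V(G,F)$) sits in the middle term, which is exactly where Lemma~\ref{lem:taucducone} allows one to control the torsion exponent.
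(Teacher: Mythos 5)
Your proof is correct and follows essentially the same route as the paper: apply the (contravariant) functor $V(-,G)$, resp.\ the (covariant) functor $V(G,-)$, to the given distinguished triangle, then invoke Lemma~\ref{lem:taucducone}. The paper states the triangles
$V(G,F_1) \to V(G,F_2) \to V(G,F) \to[+1]$ and $V(F,G) \to V(F_2,G) \to V(F_1,G) \to[+1]$
and leaves the rotation implicit; you perform the rotation explicitly so that the target sheaf sits in the middle position, and you also spell out the auxiliary checks (exactness of the functors, stability of $\Der_{\tau\geq 0}(\ak_\R)$ under $V$, compatibility of $\tau_c$ with the shift). These are all legitimate points that the paper's two-line proof glosses over, and your bookkeeping of which $v$-value plays the role of $a$ and which of $b$ is correct in both cases.
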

\begin{proof}
  We have distinguished triangles $V(G,F_1) \to V(G,F_2) \to V(G,F) \to[+1]$ and
  $V(F,G) \to V(F_2,G) \to V(F_1,G) \to[+1]$.  Now the result follows from
  Lemma~\ref{lem:taucducone}.
\end{proof}

In order to give a motivation to the definition of $v(F,F')$, it is interesting to compute the
special case $v(F,F)$. According to the bound of Lemma~\ref{lem:hom_etoile_bc}, in the transverse
case, $v(F,F)$ is an upper bound of the Viterbo spectral distance and of the boundary depth of the
couple $(F,\ak_{[0,\infty[})$. We check that this holds in the general case.  As in \S\ref{sec:Vsi}
we let $\ol \Lambda \subset T^*M$ be a compact exact Lagrangian submanifold and $\Lambda \subset
T^*_{\tau>0}(M\times\R)$ a $\R_{>0}$-conic lift of $\ol\Lambda$.

\begin{lemma}\label{lem:c_et_v}
  Let $F \in \Der_{\Lambda,+}(\ak_{M\times\R})$. We assume that $F$ is of finite
  type.  Then $c_+(F) - c_-(F) \leq v(F,F)$.  
\end{lemma}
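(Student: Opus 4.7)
The plan is to first treat the case where $\ol\Lambda$ meets $0_M$ transversely, where $\RR a_* F$ admits an explicit barcode decomposition making the bound transparent, and then reduce the general case to this one through the deformation argument of Remark~\ref{rem:position_generique}.

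\smallskip

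\noindent\textbf{Transverse case.} Assume that $\ol\Lambda$ meets $0_M$ transversely and set $G = \RR a_* F$. Since $F$ is of finite type, Lemma~\ref{lem:im_directe_constr} yields that $G$ is constructible, and since $G \in \Der_{\tau\geq 0}(\ak_\R)$ and satisfies~\eqref{eq:cond_faiscMR}, it decomposes as in~\eqref{eq:dec_barcode}:
$$
G \simeq \bigoplus_{i \in \mathcal{I}_1} \ak_{[c_i, +\infty[}^{n_i}[d_i] \oplus \bigoplus_{j \in \mathcal{I}_2} \ak_{[a_j, b_j[}^{n_j}[d_j],
$$
with $c_-(F) = \min\{c_i\}$ and $c_+(F) = \max\{c_i\}$ by~\eqref{eq:c+c-cas_constr}. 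Pick $i_\pm \in \mathcal{I}_1$ realising these extrema (with $n_{i_\pm} > 0$). Additivity of $\hom^*$ and of $-\otimes\ak_{[0,\infty[}$ in both variables then exhibits
$$
\hom^*(\ak_{[c_-, \infty[}, \ak_{[c_+, \infty[}) \otimes \ak_{[0,\infty[}\,[d_{i_+} - d_{i_-}]
$$
as a direct summand of $V(G,G)$. By the second formula of Lemma~\ref{lem:hom_etoile_bc} this summand equals $\ak_{]-\infty, c_+ - c_-[}[1]\otimes \ak_{[0,\infty[}\,[d_{i_+} - d_{i_-}] \simeq \ak_{[0,\,c_+ - c_-[}[1][d_{i_+} - d_{i_-}]$, a finite bar of length $c_+(F) - c_-(F)$. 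Since $\tau_c$ of $\ak_{[0, L[}$ is the natural (non-zero) morphism for $c < L$, it follows that $\tau_c(V(G,G)) \neq 0$ for $c < c_+(F) - c_-(F)$, whence $v(F,F) \geq c_+(F) - c_-(F)$.

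\smallskip

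\noindent\textbf{Reduction to the transverse case.} For general $F$, Remark~\ref{rem:position_generique} provides a family $F_t \in \Der_{\Lambda_t, +}(\ak_{M\times\R})$, $t \in [0, \varepsilon[$, with $\ol\Lambda_t$ meeting $0_M$ transversely for $t > 0$ (so the transverse case applies to $F_t$, finite type being preserved by the GKS equivalence), $(F_t)_+ \simeq F_+$, and morphisms $u_t, v_t$ satisfying~\eqref{eq:convergenceFt}, so that $F$ and $F_t$ are $(at)$-interleaved in the Tamarkin sense. By part~(3) of the Remark, $c_\pm(F_t) \to c_\pm(F)$ as $t \to 0^+$ (using that $H^\star(M; F_+)$ is finite-dimensional, as $F_+$ has finite-dimensional stalks and $M$ is compact). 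Applying $\hom^*$ to $u_t, v_t$ in both variables and using the translation identity $\hom^*(T_{\alpha*} F_1, T_{\beta*} F_2) \simeq T_{(\beta - \alpha)*} \hom^*(F_1, F_2)$, then tensoring with $\ak_{[0,\infty[}$, produces morphisms realising $V(F, F)$ and $V(F_t, F_t)$ as $(2at)$-interleaved in the Tamarkin sense. A direct naturality computation (using $\tau_c \circ \phi = T_{c*} \phi \circ \tau_c$) then gives $|v(F, F) - v(F_t, F_t)| \leq 4at$. Combined with the transverse case applied to $F_t$,
$$
v(F, F) \geq v(F_t, F_t) - 4at \geq c_+(F_t) - c_-(F_t) - 4at,
$$
and letting $t \to 0^+$ yields the desired inequality $v(F, F) \geq c_+(F) - c_-(F)$.

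\smallskip

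The main obstacle is the reduction step: one must carefully verify that the $(at)$-interleaving between $F$ and $F_t$ descends through the bifunctor $V(-, -)$ to a $(2at)$-interleaving of $V(F, F)$ and $V(F_t, F_t)$ whose forward and backward compositions are the Tamarkin morphisms $\tau_{4at}$, and that this implies the $1$-Lipschitz stability of $\beta$ with constant~$4$.
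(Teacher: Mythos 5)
Your proposal is correct and follows essentially the same route as the paper's proof: reduce to the case where $\ol\Lambda$ meets $0_M$ transversely, use Lemma~\ref{lem:im_directe_constr} to get a barcode decomposition of $G=\RR a_*F$, apply the formulas of Lemma~\ref{lem:hom_etoile_bc} to exhibit $\ak_{[0,\,c_+(F)-c_-(F)[}$ (up to a shift) as a direct summand of $V(G,G)$, and conclude since $\beta$ of a direct sum bounds $\beta$ of each summand. The main point where you go beyond the paper is the reduction step: the paper dismisses it with ``By Remark~\ref{rem:position_generique} we can assume transversality'', while you correctly observe that one must check not only the continuity of $c_\pm(F_t)$ (given in part~(3) of that Remark) but also an interleaving-type stability for $v$, and you sketch how the $(at)$-interleaving between $F$ and $F_t$ coming from~\eqref{eq:convergenceFt} passes through $\hom^*$ and $\otimes\ak_{[0,\infty[}$ to give $\lvert v(F,F)-v(F_t,F_t)\rvert\leq 4at$. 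This is exactly the hidden content of the paper's ``we can assume''; making it explicit is a genuine improvement. One small point worth noting: the paper's proof asserts $d_{i_-}=0$, $d_{i_+}=-\dim M$ for the degrees of the extremal infinite bars, which is only accurate when $F_+\simeq\ak_M$; your version avoids committing to specific degrees and only records $d_{i_+}-d_{i_-}$, which is all that the argument actually needs, so your formulation is in fact cleaner.
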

\begin{proof}
  By Remark~\ref{rem:position_generique} we can assume that $\ol \Lambda$ meets $0_M$ transversely.
  By Lemma~\ref{lem:im_directe_constr} $G :=\RR a_* F$ is constructible, hence $G \simeq
  \bigoplus_{i \in \mathcal{I}_1} \ak_{[c_i, +\infty[}^{n_i}[d_i] \oplus \bigoplus_{j \in
    \mathcal{I}_2} \ak_{[a_j, b_j[}^{n_j}[d_j]$ as in~\eqref{eq:dec_barcode}.  As
  in~\eqref{eq:c+c-cas_constr} the family $\mathcal{I}_1$ contains $i_\pm$ such that $c_{i_\pm} =
  c_\pm(F)$ and $d_{i_-} = 0$, $d_{i_+} = -\dim M$.  It then follows from
  Lemma~\ref{lem:hom_etoile_bc} that $V(G,G)$ decomposes as a sum of sheaves of the type
  $\ak_{[a,b[}[c]$ which contains $\ak_{[0,c_{i_+} - c_{i_-}[}[-\dim M]$ as one summand.  The result
  follows.
\end{proof}

\section{Statements}

We give bounds for the spectral norm in the case where $M = \gp$ is a compact Lie
group and $M = \gp/\sgp$ where $\sgp$ is a Lie subgroup of $\gp$.  We endow $\gp$
with a bi-invariant metric and put the induced metric on $\gp/\sgp$.  In both cases
we denote by $B_1(M)$ the unit ball bundle in $T^*M$.  Let
$\ol\Lambda \subset B_1(M)$ be an immersed compact exact Lagrangian submanifold and
let $\Lambda \subset \dT^*(M\times\R)$ be a Legendrian lift of $\ol\Lambda$, seen as
a conic Lagrangian submanifold contained in $\{\tau>0\}$.

We set $n = \dim \gp$, $m = \dim \gp/\sgp$ and we let $l$ be the diameter of $\gp$
and $l_{max}$ the length of the maximal Reeb chord of $\Lambda$.  We recall the
notations $\Der_{\Lambda,+}(\ak_{M\times\R})$ of~\eqref{eq:defDerLambdaplus} and
$v(F,F')$ of Definition~\ref{def:bounddepth} which is a bound for the spectral norm.

\begin{theorem}\label{thm:groupe}
  For any $F,F' \in \Der_{\Lambda,+}(\ak_{\gp\times\R})$ of finite type we have
  $$
v(F,F') \leq (n+1)(2l + l_{max}) .
  $$
\end{theorem}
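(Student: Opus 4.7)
Following the outline in the introduction, I would introduce the group-level functor
$$
\hom^{*,\gp}(F_1, F_2) = \RR p_{1*}\rhom(p_2^{-1}F_1, \mu^! F_2),
$$
where $p_1, p_2, \mu \colon (\gp\times\R)^2 \to \gp\times\R$ are the two projections and the group multiplication extended by addition on the $\R$ factor. A base-change computation gives $\RR a_* \hom^{*,\gp}(F_1, F_2) \simeq \hom^*(\RR a_* F_1, \RR a_* F_2)$. Setting
$$
F'' = \hom^{*,\gp}(F, F') \otimes \ak_{\gp \times [0,\infty[},
$$
the projection formula yields $\RR a_* F'' \simeq V(\RR a_* F, \RR a_* F')$, so by Definition~\ref{def:bounddepth} any $c$ with $\tau_c(F'') = 0$ gives $v(F, F') \leq c$. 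At the identity, the contributions to $F''|_{\{e\}\times\R}$ are indexed by Reeb chords of $\Lambda$, so $\supp(F''|_{\{e\}\times\R}) \subset [0, l_{max}]$, and in particular $\tau_{l_{max}}(F''|_{\{e\}\times\R}) = 0$.

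\textbf{Propagation.} The assumption $\ol\Lambda \subset B_1(\gp)$ translates into $\msupp(F), \msupp(F') \subset \{\tau \geq \|\xi\|\} = C_1$. The microsupport bounds~\eqref{eq:borne-msupp}, combined with the isometry property of $dL_g$ and $dR_g$ for the bi-invariant metric, yield $\msupp(F'') \subset C_1$ as well. Corollary~\ref{cor:annulationtauc} applied with $\alpha = 1$ then gives
$$
\tau_{l_{max} + 2r}(F''|_{B_r(e)\times\R}) = 0
$$
for every $r$ less than the injectivity radius of $\gp$. Because $\mu$ is equivariant for left translation, $F''$ is itself left-equivariant; combined with the isometry property of left translation, this transports the vanishing to every ball $B_r(g) \times \R$, $g \in \gp$.

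\textbf{Dévissage.} Fix a finite triangulation of $\gp$ with every closed cell contained in a ball of radius $r$, and let $\gp^{(k)}$ denote its $k$-skeleton. I would prove by induction on $k \in \{-1, 0, \ldots, n\}$ that $\tau_{(k+1)(l_{max}+2r)}(F''|_{\gp^{(k)}\times\R}) = 0$, the base $k = -1$ being trivial. For the inductive step, the open/closed decomposition of $\gp^{(k)}$ into open $k$-cells and the closed $(k-1)$-skeleton gives the distinguished triangle
$$
j_!(F''|_{(\gp^{(k)}\setminus\gp^{(k-1)})\times\R}) \to F''|_{\gp^{(k)}\times\R} \to i_*(F''|_{\gp^{(k-1)}\times\R}) \to[+1],
$$
whose three terms all sit in $\Der_{\tau\geq 0}$ because the skeletons have product form $Z \times \R$. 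Vanishing of $\tau_{l_{max}+2r}$ on the left term follows from the ball-level vanishing (each open $k$-cell lying in some $B_r(g)$); vanishing of $\tau_{k(l_{max}+2r)}$ on the right is the induction hypothesis; Lemma~\ref{lem:taucducone} then produces vanishing of $\tau_{(k+1)(l_{max}+2r)}$ on the middle. Taking $k = n$ and letting $r$ approach the injectivity radius, which is bounded above by the diameter $l$ of $\gp$, gives $\tau_{(n+1)(2l + l_{max})}(F'') = 0$, whence $v(F, F') \leq (n+1)(2l + l_{max})$.

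\textbf{Main obstacle.} The crux is the dévissage. One must produce a triangulation whose closed cells each fit in a ball of radius below the injectivity radius, verify that the open/closed distinguished triangle stays in $\Der_{\tau \geq 0}$ with the right microsupport control, and check that Lemma~\ref{lem:taucducone} applies cleanly at every stage. The groupification step is equally delicate: establishing the base-change identity $\RR a_* \hom^{*,\gp} \simeq \hom^*(\RR a_*, \RR a_*)$, verifying that $\msupp(F'') \subset C_1$ despite the $\mu^!$ and $\rhom$ in its definition, and pinning down the left-equivariance of $F''$ used in the transport step all require careful bookkeeping.
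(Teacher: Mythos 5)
Your overall architecture — define $\hom^{*,\gp}$, prove $\RR a_*\hom^{*,\gp}(F,F')\simeq\hom^*(\RR a_*F,\RR a_*F')$, establish $\msupp(F'')\subset C_1$ and the vanishing of $F''|_{\{e\}\times[l_{max},\infty[}$, then propagate and d\'evisser — is exactly the paper's. The groupification and d\'evissage steps are sound. The gap is in the propagation step, and it is genuine.

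You claim that ``$F''$ is itself left-equivariant'' and use this to transport the vanishing from $B_r(e)\times\R$ to $B_r(g)\times\R$ for arbitrary $g$. This is false. The functoriality of $\hom^{*,\gp}$ under left translation is twisted: since $p_2\circ(L_h\times\id)=p_2$ and $\mu\circ(L_h\times\id)=L_h\circ\mu$, one computes
$L_h^{-1}\hom^{*,\gp}(F,F')\simeq\hom^{*,\gp}(F,L_h^{-1}F')$. So $F''$ is left-equivariant only if $F'$ is, which it is not: $F'$ lives in $\Der_{\Lambda,+}$ for a fixed $\Lambda$ that has no translation symmetry, and the fiberwise vanishing of Lemma~\ref{lem:annulationF2fibree} hinges on $F$ and $F'$ being supported on the \emph{same} Lagrangian (so that the costalk at $(e,c)$ computes $\RHom(F,T_{-c*}F')$ and vanishes beyond the maximal Reeb chord length $l_{max}$). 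Replacing $F'$ by $L_h^{-1}F'$ moves it to $\Der_{L_h(\Lambda),+}$ and destroys this mechanism.

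The paper's fix (Proposition~\ref{prop:annultaugroupe0}) is to propagate the vanishing from $e$ to an arbitrary $g$ along a geodesic, applying Corollary~\ref{cor:annulationtauc} through a chain of small balls; the microsupport bound $\msupp(F'')\subset C_1$ is what makes each step cost $2$ times the step length. This yields $\tau_c(F''|_{\{g\}\times\R})=0$ for $c>2d(e,g)+l_{max}$, and since $d(e,g)\leq l$ (the diameter) one gets $c>2l+l_{max}$ on each small ball before starting the d\'evissage. This is exactly where the factor $2l$ in the final bound comes from. Your per-ball threshold $l_{max}+2r$, with $r$ the triangulation mesh, is too optimistic; the correct threshold is $l_{max}+2(l+\varepsilon)$, independent of the mesh (which you then take to zero). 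The rest of your d\'evissage then goes through with $c=2(l+\varepsilon)+l_{max}$ in place of $l_{max}+2r$ and gives the stated bound as $\varepsilon\to0$.
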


\begin{corollary}\label{cor:esphomogsimplconnexe}
  If $\sgp$ is not connected, we assume moreover that the characteristic of $\ak$
  does not divide $|\pi_0(\sgp)|$.  Then, for any
  $F,F' \in \Der_{\Lambda,+}(\ak_{\gp/\sgp\times\R})$ of finite type, we have
$$
v(F,F') \leq \frac14 (m+3)^2 (n+1)(2l + l_{max}) .
$$
\end{corollary}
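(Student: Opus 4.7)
The plan is to reduce Corollary~\ref{cor:esphomogsimplconnexe} to Theorem~\ref{thm:groupe} by descent along the quotient map $q\colon\gp\times\R\to\gp/\sgp\times\R=:M\times\R$.  First I would set the stage: the map $q$ is a principal $\sgp$-bundle, hence proper since $\sgp$ is compact, and Remark~\ref{rem:imdir_somme_v}(1) applied to $q$ gives $v(\RR q_*\tilde F,\RR q_*\tilde F')=v(\tilde F,\tilde F')$ for any $\tilde F,\tilde F'\in\Der(\ak_{\gp\times\R})$.  The immersed exact Lagrangian $\ol\Lambda\subset B_1(M)$ lifts to $\ol\Lambda'\subset B_1(\gp)$ because the quotient metric on $M$ is induced from the bi-invariant one on $\gp$, and the maximal Reeb chord length $l_{max}$ is preserved; let $\Lambda'\subset\dT^*(\gp\times\R)$ be the corresponding conic Legendrian lift, so that $\Der_{\Lambda',+}(\ak_{\gp\times\R})$ is exactly the target of Theorem~\ref{thm:groupe}.

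The heart of the argument is a \emph{decomposition claim}: any $F\in\Der_{\Lambda,+}(\ak_{M\times\R})$ of finite type is a direct summand of a sheaf $E$ obtained as an iterated cone built from sheaves of the form $\RR q_*\tilde F_i$, with $\tilde F_i\in\Der_{\Lambda',+}(\ak_{\gp\times\R})$ of finite type, and with the number of cones controlled by $m=\dim M$.  I would prove this by induction on a filtration of $M$ by skeleta of a CW decomposition: on each contractible piece $V\subset M$, the bundle $q$ trivializes as $V\times\sgp$, and the restriction $F|_V$ admits a natural $\sgp$-equivariant lift whose pushforward by $q$ recovers $F|_V$ as a direct summand.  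To ensure that $F$ appears as a summand rather than only a subquotient, one averages with the projector $\tfrac{1}{|\pi_0(\sgp)|}\sum_{c\in\pi_0(\sgp)}c^*$; invertibility of $|\pi_0(\sgp)|$ in $\ak$ is precisely the assumed characteristic hypothesis (vacuous when $\sgp$ is connected).  Mayer--Vietoris-type triangles then glue the local lifts across skeleta, each jump in dimension producing one further cone.

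Armed with this decomposition, I would apply Lemma~\ref{lem:v_cone} iteratively to the triangles defining $E$ and $E'$ in both arguments of $v$, and combine with Remark~\ref{rem:imdir_somme_v} to obtain
\begin{equation*}
v(F,F')\leq v(E,E')\leq\sum_{i,j} v(\RR q_*\tilde F_i,\RR q_*\tilde F'_j)=\sum_{i,j} v(\tilde F_i,\tilde F'_j)\leq\tfrac{(m+3)^2}{4}(n+1)(2l+l_{max}),
\end{equation*}
where the first inequality uses Remark~\ref{rem:imdir_somme_v}(2), the middle equality invokes the descent formula from paragraph one, and the final bound applies Theorem~\ref{thm:groupe} on $\gp$ (dimension $n$, bi-invariant diameter $l$, and preserved Reeb chord length $l_{max}$).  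The coefficient $(m+3)^2/4$ comes from the squared count of pieces in the iterated cone decomposition (with one factor from each slot of $v$), together with the recurrence $f(m)\leq(\sqrt{f(m-1)}+1)^2$ natural to Lemma~\ref{lem:v_cone}: if at dimension $m-1$ we already have a decomposition with $\sqrt{f(m-1)}$ pieces, the inductive step adds one more layer corresponding to the top-dimensional cells.

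The main obstacle, in my view, is establishing the decomposition claim in the sharp form: a naive cell-by-cell argument using an arbitrary triangulation of $M$ produces roughly $(m+1)^2$ cone terms, which would merely give an $(m+1)^2$ coefficient.  Trimming this down to $(m+3)^2/4$ requires an economical filtration that groups cells efficiently and a careful bookkeeping of how each cone step degrades the $\tau_c$ threshold, in the spirit of the introduction's remark that passing from the $k$-skeleton to the $(k+1)$-skeleton forces $\tau_{kc}$ into $\tau_{(k+1)c}$.  A second subtlety is verifying that the lifted sheaves $\tilde F_i$ genuinely lie in $\Der_{\Lambda',+}$ with finite type and that the microsupport bounds behave correctly along the Mayer--Vietoris gluing, which is where the hypothesis that $\ol\Lambda$ is contained in the unit ball bundle plays its role via the metric-compatible injectivity radius used in Corollary~\ref{cor:annulationtauc}.
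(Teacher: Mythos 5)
Your high-level strategy — reduce to the Lie-group case by descent along $q\colon\gp\times\R\to M\times\R$, express $F$ as a direct summand of an iterated cone built from pushforwards of sheaves on $\gp\times\R$, and then track the degradation via Lemma~\ref{lem:v_cone} — matches the shape of the paper's proof, and your treatment of the disconnected case (the idempotent built from $\frac{1}{|\pi_0(\sgp)|}\sum c^*$, vacuous when $\sgp$ is connected) captures exactly how the characteristic hypothesis enters. The descent identity $v(\RR q_*\tilde F, \RR q_*\tilde F')=v(\tilde F,\tilde F')$ via Remark~\ref{rem:imdir_somme_v}(1) is also the right way in.

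However, there is a real gap in the central decomposition claim, and you yourself flag it: a CW/Mayer--Vietoris argument over trivializing charts would, as you note, give on the order of $(m+1)^2$ cones, and you do not supply the ``economical filtration'' needed to recover $(m+3)^2/4$. This is not a bookkeeping subtlety but the main technical content of the corollary, and the geometric approach you sketch does not obviously converge to it. The paper's route is entirely different and avoids triangulations and local trivializations altogether. It is a homological-algebra argument on the local system $L=\RR q_!(\ak_{\gp\times\R})$: the projection formula gives $\RR q_!(q^{-1}F)\simeq F\otimes L$, so the problem becomes exhibiting $\ak_M$ as a direct summand of an iterated cone of objects of the form $L\otimes G$ with $G$ locally constant (then tensoring everything with $F_0$, the canonical object in $\Der_{\Lambda,+}$, transports the decomposition to $\calF_{L,\Lambda}$). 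The crucial point — and what you are missing — is Lemma~\ref{lem:coneiteres1}: since $H^0L\simeq\ak_M$ (here connectedness of $\sgp$ is used) and $M$ has flabby dimension $n$, one builds a Postnikov-type tower in which, at each stage $i$, the object $L_i$ is $2i$-lacunary; the octahedron axiom and the truncation triangle of $L$ tensored with $\tau_{\geq1}L_i$ \emph{double} the lacunarity gap at every step, so one reaches a gap exceeding the cohomological dimension of $M$ after only $\lceil(m-1)/2\rceil$ cones rather than $m$. This doubling trick is precisely what turns a naive $(m+1)^2$ into $(\lceil(m-1)/2\rceil+1)^2\leq(m+3)^2/4$, and there is no geometric analogue of it in your skeleta argument. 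In addition, your Mayer--Vietoris step does not explain why the local equivariant lifts glue into sheaves that genuinely lie in $\Der_{\Lambda',+}(\ak_{\gp\times\R})$ of finite type, whereas the paper sidesteps this entirely by only ever using $q^{-1}F$ (which automatically has microsupport over $\Lambda'$).
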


We give the proof of Theorem~\ref{thm:groupe} in \S\ref{sec:Lie_group} and deduce
Corollary~\ref{cor:esphomogsimplconnexe} in \S\ref{sec:esp_homog}.

For a general $\Lambda$ it may happen that there is no sheaf in
$\Der_{\Lambda,+}(\ak_{\gp/\sgp\times\R})$.  In good cases we have a canonical $F$
and deduce results on the spectral norm of $\Lambda$.

\begin{corollary}\label{cor:defLegendrienne}
  We assume moreover that $\Lambda$ is a Legendrian deformation in $J^1(M\times \R)$
  of the Legendrian lift of an embedded compact exact Lagrangian submanifold of
  $T^*M$. Then
  $$
  c_+(\Lambda) - c_-(\Lambda) \leq
  \begin{cases}
(n+1)(2l + l_{max}) & \; \text{if $M =\gp$,} \\ 
\frac14 (m+3)^2 (n+1)(2l + l_{max}) & \; \text{if $M =\gp/\sgp$
  and $\operatorname{char}(\ak) \nmid |\pi_0(\sgp)|$.}     
  \end{cases}
  $$
\end{corollary}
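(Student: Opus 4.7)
The plan is to reduce the statement to the bound on $v(F,F')$ already proved by choosing $F = F'$ to be the canonical ``quantization'' of $\Lambda$. First, since $\Lambda$ is a Legendrian deformation of the Legendrian lift of an embedded compact exact Lagrangian submanifold of $T^*M$, Remark~\ref{lem:equiv-cat}(ii) yields an equivalence $\Der_{\Lambda,+}(\ak_{M\times\R}) \simeq \Dlc(\ak_M)$ via $F\mapsto F_+$. I let $F$ be the unique object with $F_+ \simeq \ak_M$; according to the discussion at the end of \S\ref{sec:Vsi}, this $F$ is simple, hence in particular of finite type, so Lemma~\ref{lem:im_directe_constr} applies and Lemma~\ref{lem:c_et_v} is available.

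Second, Lemma~\ref{lem:cas_F_canonique} identifies the sheaf-theoretic invariants $c(F,\alpha)$ with the usual Viterbo spectral invariants $c(\Lambda,\alpha)$ for every $\alpha \in H^\star(M;\ak_M) \simeq H^\star(M;F_+)$. Taking minimum and maximum over $\alpha$ gives $c_\pm(\Lambda) = c_\pm(F)$, and Lemma~\ref{lem:c_et_v} then yields
$$
c_+(\Lambda) - c_-(\Lambda) = c_+(F) - c_-(F) \leq v(F,F) .
$$

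Finally I apply Theorem~\ref{thm:groupe} (in the case $M = \gp$) or Corollary~\ref{cor:esphomogsimplconnexe} (in the case $M = \gp/\sgp$, under the assumption on $\operatorname{char}(\ak)$) with $F' = F$ to obtain the announced numerical bound. No step here is a real obstacle: the corollary is essentially a bookkeeping deduction combining the existence/uniqueness of the canonical simple sheaf~$F$ from~\cite{G19},~\cite{V19} and~\cite{GKS12}, the identification of Lemma~\ref{lem:cas_F_canonique} between $c(F,\alpha)$ and the classical Viterbo invariants, the bound $c_+(F)-c_-(F) \leq v(F,F)$ of Lemma~\ref{lem:c_et_v}, and the two main results of the paper.
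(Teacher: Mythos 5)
Your proof is correct and is essentially identical to the paper's own argument: take the unique simple $F$ with $F_+ \simeq \ak_M$ from Remark~\ref{lem:equiv-cat}, identify $c_\pm(\Lambda)$ with $c_\pm(F)$ via Lemma~\ref{lem:cas_F_canonique}, bound $c_+(F)-c_-(F)$ by $v(F,F)$ via Lemma~\ref{lem:c_et_v}, and invoke Theorem~\ref{thm:groupe} or Corollary~\ref{cor:esphomogsimplconnexe}.
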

\begin{proof}
  By Remark~\ref{lem:equiv-cat} there exists a unique
  $F \in \Der_{\Lambda,+}(\ak_{M\times\R})$ such that $F_+ \simeq \ak_M$.
  By~\cite{G19} this sheaf is simple, in particular of finite type.  Now the result
  follows from Lemmas~\ref{lem:cas_F_canonique} and~\ref{lem:c_et_v}.
\end{proof}

\section{Proof of Theorem~\ref{thm:groupe} -- compact Lie group case}
\label{sec:Lie_group}

We assume in this section that $M = \gp$ is a compact Lie group.  We let
$$
p_1,p_2, \mu \colon \gp \times \R \times \gp \times \R \to \gp \times \R,
\qquad a\colon \gp\times\R \to \R
$$
be the projections and the action defined by $p_1(g,t,g',t') = (g',t')$,
$p_2(g,t,g',t') = (g',t')$, $\mu(g,t,g',t') = (g g',t+t')$ and $a(g,t) =t$.  We will
use the following variation of the functor $\hom^*$ (see~\eqref{eq:defhometoile})
which takes into account the group structure.  For $F,F' \in \Der(\ak_{\gp\times\R})$
we define a sheaf on $\gp\times\R$:
\begin{equation}
  \label{eq:defhometoileG}
\hom^{*,\gp}(F,F') = \RR p_{1*}\rhom(p_2^{-1}F, \mu^!F') .
\end{equation}
This defines a functor $\hom^{*,\gp}$ which comes with a left adjoint $*_\gp$
defined by $F *_\gp F' = \RR \mu_!(p_1^{-1}F \otimes p_2^{-1}F')$:
\begin{equation}
    \label{eq:adjhometoileG}
\Hom(F,  \hom^{*,\gp}(F',F'')) \simeq \Hom(F *_\gp F' ,F'') .   
\end{equation}

We have seen in Section~\ref{sec:barcode} how to obtain some information about the spectral
invariants from $\hom^*( \RR a_*F, \RR a_*F')$.  In Proposition~\ref{projhomgroupe} we express this
latter sheaf as the direct image of $\hom^{*,\gp}(F,F')$ by $a$ and in Lemmas~\ref{lem:msuppF2},
\ref{lem:annulationF2fibree} we check that $\rsect_{\gp\times ]0,+\infty[} \hom^{*,\gp}(F,F')$
satisfies the hypotheses of Corollary~\ref{cor:annulationtauc}, which will be used to bound
$v(F,F')$.  The fact that $\Lambda$ is the Legendrian lift of an immersed exact Lagrangian is
actually only used in Lemma~\ref{lem:HomFTcF}; the others results follows from $\ol\Lambda \subset
B_1(\gp)$.

\begin{proposition}\label{projhomgroupe}
  We have $\RR a_* \hom^{*,\gp}(F,F') \simeq \hom^*(\RR a_{*}F, \RR a_{*}F')$.
\end{proposition}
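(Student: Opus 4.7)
\emph{Reduction by adjunctions.} The plan is to reduce the proposition to a compatibility between the convolutions on $\gp\times\R$ and on $\R$, which I would establish by base change. First, a routine calculation shows that $\hom^{*,\gp}$ is the right adjoint of the convolution $H*_\gp F := \RR\mu_!(p_1^{-1}H\otimes p_2^{-1}F)$, as already recorded in~\eqref{eq:adjhometoileG}; likewise, $\hom^*$ is the right adjoint of $G*G' := \RR s_!(G\boxtimes G')$ on $\R$. Combining these adjunctions with $a^{-1}\dashv \RR a_*$ and applying Yoneda, the target isomorphism $\RR a_*\hom^{*,\gp}(F,F')\simeq \hom^*(\RR a_*F,\RR a_*F')$ becomes equivalent to exhibiting a natural isomorphism
\[
a^{-1}G *_\gp F \;\simeq\; a^{-1}(G*\RR a_*F)
\]
for $G\in \Der(\ak_\R)$ and $F\in\Der(\ak_{\gp\times\R})$.

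\emph{Geometric setup.} Since $\gp$ is compact, $a\colon\gp\times\R\to\R$ is proper. The square with vertical arrows $\tilde g=\id_\gp\times s\colon\gp\times\R^2\to\gp\times\R$ and $s\colon\R^2\to\R$, and horizontal arrows $\pi^Y\colon\gp\times\R^2\to\R^2$ (the projection forgetting $\gp$) and $a$, is Cartesian, so base change for $\RR s_!$ gives $a^{-1}(G*\RR a_*F)\simeq \RR\tilde g_!(\ak_\gp\boxtimes G\boxtimes \RR a_*F)$. On the other side, factor $\mu=\tilde g\circ\tilde\pi$ with $\tilde\pi\colon X^2\to\gp\times\R^2$, $(g_1,t_1,g_2,t_2)\mapsto(g_1g_2,t_1,t_2)$, a proper $\gp$-bundle (here $X=\gp\times\R$). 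Since $(a\circ p_1)^{-1}G$ is pulled back through $\tilde\pi$, the projection formula for $\tilde\pi$ yields
\[
a^{-1}G *_\gp F \simeq \RR\tilde g_!\bigl((\ak_\gp\boxtimes G\boxtimes\ak_\R)\otimes \RR\tilde\pi_!(p_2^{-1}F)\bigr).
\]

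\emph{The key identification.} The remaining task --- and the main obstacle --- is to show $\RR\tilde\pi_!(p_2^{-1}F)\simeq \ak_\gp\boxtimes\ak_\R\boxtimes \RR a_*F$. Here the group structure enters essentially: the map $\tilde\rho\colon\gp\times X\to X$, $(g,x)\mapsto gx$, together with the projection $p_X\colon \gp\times X\to X$, fits into a Cartesian square with the two copies of $a$, because $(g,x)\mapsto(gx,x)$ is a diffeomorphism $\gp\times X\isoto X\times_\R X$. Proper base change for $a$ then gives $\RR\tilde\rho_*p_X^{-1}F\simeq a^{-1}\RR a_*F$. Writing $p_2=p_X\circ\lambda$ for $\lambda(g_1,t_1,g_2,t_2)=(g_1,g_2,t_2)$, and using a further Cartesian square linking $\tilde\pi,\lambda,\tilde\rho$ via $\lambda'(h,t_1,t_2)=(h,t_2)$, base change transfers this identity to the form needed. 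Both sides of the target natural isomorphism then collapse to $\RR\tilde g_!(\ak_\gp\boxtimes G\boxtimes \RR a_*F)$, concluding the proof.
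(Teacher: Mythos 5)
Your proof is correct, but it takes a genuinely different route from the one in the paper. The paper computes $\RR a_*\hom^{*,\gp}(F,F')$ directly by unfolding the definition: it factors the maps $p_1,p_2,\mu$ through the intermediate space $\R\times(\gp\times\R)$, exhibits the relevant Cartesian squares, and applies proper base change and the $(\RR f_*,f^!,\rhom)$ adjunctions step by step until the expression becomes $\RR q_{1*}\rhom(q_2^{-1}\RR a_*F,s^!\RR a_*F')$. You instead dualize the whole problem: using the two convolution adjunctions $\Hom(H,\hom^{*,\gp}(F,F'))\simeq\Hom(H*_\gp F,F')$ (eq.~\eqref{eq:adjhometoileG}) and $\Hom(G,\hom^*(G',G''))\simeq\Hom(G*G',G'')$, together with $a^{-1}\dashv\RR a_*$ and Yoneda, you reduce to a single natural isomorphism $a^{-1}G*_\gp F\simeq a^{-1}(G*\RR a_*F)$ between objects built out of $\otimes$, $f^{-1}$ and $\RR f_!$ only. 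That isomorphism is then verified by proper base change and the projection formula, the essential group-theoretic input being the diffeomorphism $\gp\times X\isoto X\times_\R X$, $(g,x)\mapsto(gx,x)$, with $X=\gp\times\R$. What your route buys: the actual diagram chases involve only $\RR f_!$ and $f^{-1}$, avoiding $f^!$ and $\rhom$, which makes the base-change steps more transparent. What it costs: the initial Yoneda reduction requires both convolution adjunctions to be in place and the resulting isomorphism to be natural in $G$, points you correctly flag but should record. Both arguments ultimately exploit the same geometry --- properness of $a$ since $\gp$ is compact, and the identification of $X\times_\R X$ with $\gp\times X$ through the group action --- so they are of comparable depth; yours simply packages the computation on the covariant side of the adjunctions.
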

\begin{proof}
Let $a:\gp \times \R \to \R$ the canonical projection. 
We introduce some maps and commutative diagrams in order to create Cartesian squares and apply base changes:
$$
\begin{tikzcd}
  (\gp \times \R) \times (\gp \times \R) \ar[d,"p_1"] \ar[r,"a\times\id"] 
& \R \times (\gp \times \R) \ar[d, "\ol{p}_1"] \ar[r,"\id\times a"] & \R\times\R \ar[d,"q_1"]\\
\gp \times \R \ar[r, "a"] & \R \ar[r,"\id"]& \R \rlap{,}
 \end{tikzcd}
 $$
 $$
\begin{tikzcd}
  (\gp \times \R) \times (\gp \times \R) \ar[d,"p_2"] \ar[r,"a\times\id"] 
& \R \times (\gp \times \R) \ar[d, "\ol{p}_2"] \ar[r,"\id\times a"] \arrow[dr, phantom, "\square"] & \R\times\R \ar[d,"q_2"] \\
\gp \times \R \ar[r, "\id"] & \gp \times \R \ar[r,"a"]& \R \rlap,
 \end{tikzcd}
 $$
 $$ 
 \begin{tikzcd}
  (\gp \times \R) \times (\gp \times \R) \ar[d,"\mu"] \ar[r,"a\times\id"] \arrow[dr, phantom, "\square"]
& \R \times (\gp \times \R) \ar[d, "\ol{\mu}"] \ar[r,"\id\times a"] & \R\times\R \ar[d,"s"]\\
\gp \times \R \ar[r, "a"] & \R \ar[r,"\id"]& \R \rlap,
 \end{tikzcd}
 $$
 with $\ol{\mu}$ the sum over the $\R$ factors.  Using these notations we obtain the
 sequence of isomorphisms
\begin{align*}
\RR a_* \RR p_{1*}& \rhom(p_2^{-1}F,\mu^! F') 
\\
&\simeq  \RR q_{1*} \RR( \id\times a )_ * \RR (a\times \id)_ * \rhom((a\times \id)^{-1}\ol{p}_2^{-1}F,\mu^! F') \\
&\simeq \RR q_{1*} \RR( \id\times a )_ * \rhom(\ol{p}_2^{-1}F,\RR (a\times \id)_ *\mu^! F')
\\
&\simeq \RR q_{1*} \RR( \id\times a )_ * \rhom(\ol{p}_2^{-1}F,\ol{\mu}^!\RR a_ * F')
\\
&\simeq \RR q_{1*} \RR( \id\times a )_ * \rhom(\ol{p}_2^{-1}F,(\id\times a)^!s^!\RR a_ * F')
\\
&\simeq \RR q_{1*} \rhom(\RR( \id\times a )_ !\ol{p}_2^{-1}F,s^!\RR a_ * F')
\\
&\simeq \RR q_{1*} \rhom( q_2^{-1}\RR a_ *F,s^!\RR a_ * F') \ .
\end{align*}
\end{proof}

Let $(g;\gamma)$ be the coordinates on $T^*\gp$. We recall the cone
$C_\alpha = \{(g,t;\gamma,\tau);\; \tau \geq \alpha\, ||\gamma|| \}$ defined
in~\eqref{eq:defCalpha}.

\begin{lemma}\label{lem:msuppF2}
  For any $F,F' \in \Der_{\Lambda,+}(\ak_{\gp\times\R})$ we have
  $\msupp(\hom^{*,\gp}(F,F')) \subset C_1$.
\end{lemma}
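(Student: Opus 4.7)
The plan is to apply the microsupport estimates of~\eqref{eq:borne-msupp} in sequence to the defining formula $\hom^{*,\gp}(F,F') = \RR p_{1*}\rhom(p_2^{-1}F,\mu^!F')$, exploiting crucially that the metric on $\gp$ is bi-invariant, so that the differentials of left and right translations are isometries on cotangent spaces. The hypothesis $\ol\Lambda \subset B_1(\gp)$, together with $\Lambda$ being its Legendrian lift in $\{\tau>0\}$, already gives $\msupp F,\msupp F' \subset C_1$: any $(g,t;\gamma,\tau)\in \Lambda$ has $\gamma = \tau\gamma_0$ with $(g;\gamma_0)\in\ol\Lambda$ and $\|\gamma_0\|_g \leq 1$, so $\|\gamma\|_g \leq \tau$, and the zero-section contribution lies in $C_1$ trivially.

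First I would compute $\msupp(p_2^{-1}F) \subset \{(y,y';0,0,\alpha',\beta') : (y';\alpha',\beta') \in \msupp F\}$ (the cotangent components on the first factor are forced to zero). For $\mu^!F'$, since $\mu$ is a submersion we have $\mu^!F' \simeq \mu^{-1}F' \otimes \omega_\mu$ for a shifted orientation sheaf, hence the two microsupports coincide. The transpose differential of $\mu$ at $(g,t,g',t')$ sends $(\gamma,\tau)\in T^*_{(gg',t+t')}(\gp\times\R)$ to $(R_{g'}^*\gamma,\tau,L_g^*\gamma,\tau)$, which gives an explicit description of $\msupp(\mu^!F')$.

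Next I would combine the two via the $\rhom$ bound. Condition~(iv) of~\eqref{eq:borne-msupp} is satisfied, because the cotangent component of $\dmsupp(p_2^{-1}F)$ on the first factor vanishes, while in $\dmsupp(\mu^!F')$ that same component equals $(R_{g'}^*\gamma,\tau)$, which can vanish only when $(\gamma,\tau)=0$, i.e.\ on the excluded zero section. This yields microsupport points of the form $(g,t,g',t';R_{g'}^*\gamma,\tau,L_g^*\gamma-\alpha',\tau-\beta')$ with $(gg',t+t';\gamma,\tau)\in\msupp F'$ and $(g',t';\alpha',\beta')\in\msupp F$. Applying the direct image bound for $\RR p_{1*}$ then singles out those points whose cotangent components on the second factor vanish, forcing $\alpha' = L_g^*\gamma$ and $\beta' = \tau$, and projects to $(g,t;R_{g'}^*\gamma,\tau)$. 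Since $\msupp F'\subset C_1$ yields $\tau\geq \|\gamma\|_{gg'}$, and bi-invariance of the metric gives $\|R_{g'}^*\gamma\|_g = \|\gamma\|_{gg'}$, we conclude $\tau\geq \|R_{g'}^*\gamma\|_g$, which is exactly the condition to lie in $C_1$.

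The main technical obstacle is the properness hypothesis for the direct image bound: $p_1$ is not proper on the support of the inner sheaf, because $F,F'$ are only bounded below in the $\R$-coordinate (by~\eqref{eq:cond_faiscMR}). I would handle this by the standard localization trick: the microsupport bound is local in the target, so one may replace $F'$ by its restriction to $\gp\times ]-\infty,N[$ for arbitrary $N$ when computing $\hom^{*,\gp}$ over $\gp\times ]-\infty,N-A[$, which restores properness; alternatively, the triangle~\eqref{eq:triangle-condfaiscMR} reduces matters to the compactly supported component of $F'$ together with a locally constant tail, the latter contributing only to the zero section of the microsupport.
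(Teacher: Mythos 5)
Your main argument reproduces the paper's proof faithfully: the paper likewise bounds $\msupp(p_2^{-1}F)$ and $\msupp(\mu^!F')$ by explicit sets $A$ and $B$ via~\cite[Prop.~5.4.5]{KS90}, checks that $A\cap B$ lies in the zero section so that the $\rhom$ estimate~\cite[Prop.~5.4.14]{KS90} applies, uses bi-invariance of the metric so that the transpose of $d\mu^r_{g_2}$ is an isometry (hence the rough bound $A^a + B \subset \{\tau_1 \geq \|\gamma_1\|\}$), and then projects by~\cite[Prop.~5.4.4]{KS90}. Your cotangent-component computations and the role you give to bi-invariance match exactly.

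The one problematic step is your treatment of properness for $\RR p_{1*}$. You are right to flag it, but the localization you propose is not correct: on $p_1^{-1}(g_1,t_1)\cap\supp\rhom(p_2^{-1}F,\mu^!F')$ the coordinate $t_2$ is unbounded above (it is only bounded \emph{below} by $\supp F$), so $t_1+t_2$ sweeps through all large values even for $t_1$ fixed. Consequently $\hom^{*,\gp}(F,F')$ restricted to $\gp\times\mo]-\infty,N-A[$ genuinely depends on $F'$ at $t\geq N$, and replacing $F'$ by $F'_{\gp\times\mo]-\infty,N[}$ changes the sheaf there, not just germs far out. Your alternative via the triangle~\eqref{eq:triangle-condfaiscMR} is closer to a fix, but the tail $E_{\gp\times[A',\infty[}$ still has noncompact support, so the proper direct image bound cannot be invoked verbatim for that summand either; one needs a separate vanishing-type argument (in the spirit of Lemma~\ref{lem:annulationR}, exploiting the one-sided condition $\tau\geq 0$) to dispose of it. In fairness, the paper is terse on this very point: it states the estimate and defers to the analogous computation in~\cite[Prop.~4.13]{GS14}, where the non-properness in the positive $t$-direction is absorbed using $\msupp\subset\{\tau\geq 0\}$. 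So the delicacy is real and not peculiar to you, but the specific workaround you offer does not close it.
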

\begin{proof}
  This is proved in the same way as (ii) of Lemma~\ref{lem:SS_hom_etoile}.
  By~\cite[Prop.~5.4.5]{KS90} we have $\msupp(p_2^{-1}F) \subset A$,
  $\msupp(\mu^!F') \subset B$, where
  \begin{align*}
    A &= \{(g_1,t_1,g_2,t_2; 0,0, \gamma, \tau); \;
        \tau \geq ||\gamma|| \} , \\
    B &= \{(g_1,t_1,g_2,t_2; \gamma_1, \tau_1 , \gamma_2, \tau_2); \;
        \tau_2=\tau_1, \; \exists \gamma \in T^*_{g_1g_2}\gp,\; \tau_1 \geq ||\gamma||, \\
      &\hspace{4cm} \gamma_1 = {}^t(d\mu^r_{g_2})(\gamma), \; 
        \gamma_2 = {}^t(d\mu^l_{g_1})(\gamma) \},
  \end{align*}
  where $\mu^r_h, \mu^l_h\colon \gp \to \gp$ are the right and left actions,
  $g \mapsto gh$, $g\mapsto hg$.  Since $A\cap B$ is contained in the zero section,
  we deduce $\msupp( \rhom(p_2^{-1}F, \mu^!F')) \subset A^a+B$,
  using~\cite[Prop.~5.4.14]{KS90}.  Since the metric is bi-invariant, we have the
  rough bound $B \subset \{\tau_1 \geq ||\gamma_1||\}$ and hence
  $A^a+B \subset \{\tau_1 \geq ||\gamma_1||\}$.  By~\cite[Prop.~5.4.4]{KS90}
  $\msupp(\hom^{*,\gp}(F,F'))$ is contained in the projection of $A^a+B$ to the first
  factor $T^*(\gp\times\R)$, which concludes the proof.
\end{proof}

\begin{lemma}\label{lem:HomFTcF}
  For any $F,F' \in \Der_{\Lambda,+}(\ak_{\gp\times\R})$ we have
  $\RHom(F, T_{-c*}F') \simeq 0$ for all $c> l_{max}$.
\end{lemma}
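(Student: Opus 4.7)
My plan rests on the geometric observation that for $c > l_{max}$ the conic Lagrangians $\Lambda$ and $T_{-c}\Lambda$ are disjoint in $\dT^*(\gp\times\R)$: a nonzero common point would project to a point of $\ol\Lambda$ in $T^*\gp$ whose two preimages under the conic lift differ by exactly $c$ in $t$, realising a Reeb chord of length $c$ — which by definition of $l_{max}$ does not exist. Thus $\dmsupp(F)\cap\dmsupp(T_{-c*}F') = \Lambda\cap T_{-c}\Lambda = \emptyset$, and condition~(iv) of~\eqref{eq:borne-msupp} yields
\[
\msupp(\rhom(F, T_{-c*}F')) \subset \msupp(F)^a + \msupp(T_{-c*}F').
\]
Writing $\msupp(F) = \Lambda \cup 0_{\supp F}$ and similarly for $T_{-c*}F'$, this right-hand side has four summands; the ``doubly conic'' piece $\Lambda^a + T_{-c}\Lambda$ is empty, so $\msupp(\rhom(F, T_{-c*}F'))$ is confined to the zero section together with conormals controlled by $\pi(\Lambda)$ and $\pi(T_{-c}\Lambda)$.

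Second, I would show $\RHom(F, T_{-c*}F') = \rsect(\gp\times\R;\,\rhom(F,T_{-c*}F')) = 0$. The model case is $\ol\Lambda = 0_\gp$, $F = \ak_{\gp\times[0,\infty[}$, where a direct computation gives $\rhom(F, T_{-c*}F) \simeq \ak_{\gp\times\,]0,\infty[}$ for $c > 0$; this sheaf is \emph{nonzero} but satisfies $\rsect(\gp\times\R;\cdot) = H^*(\gp;\ak)\otimes \rsect(\R;\ak_{]0,\infty[}) = 0$, since $\rsect(\R;\ak_{]0,\infty[})=0$ via the standard triangle $\ak_{]0,\infty[}\to \ak_\R \to \ak_{]-\infty,0]}$. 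I expect $\rhom(F, T_{-c*}F')$ to inherit an analogous ``half-open'' structure in general — namely, to be built from iterated extensions of sheaves of the form $G\boxtimes \ak_{]\alpha,\beta[}$ (with possibly $\beta = \infty$) — which implies the vanishing of $\rsect$ termwise.

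To make this rigorous, I would reduce by Remark~\ref{rem:position_generique} to the transverse case, then apply Lemma~\ref{lem:im_directe_constr} to write $\RR a_*F$ and $\RR a_*F'$ as barcodes as in~\eqref{eq:dec_barcode}. The explicit $\hom^*$-formulas of Lemma~\ref{lem:hom_etoile_bc} give the desired vanishing on $\R$: at $t = c > l_{max}$ the costalk $\rsect_{\{c\}}(\R;\,\hom^*(\RR a_*F,\RR a_*F'))$, which equals $\RHom(\RR a_*F, T_{-c*}\RR a_*F')$ by Lemma~\ref{lem:section_hom_etoile}, computes out to zero — each surviving contribution is of the form $\rsect_{\{c\}}(\R;\,\ak_{]-\infty,l[}[1])$ with $l\le l_{max}$, which vanishes.

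The hard step will be transporting the vanishing from the ``averaged'' direct-image $\hom^*(\RR a_*F, \RR a_*F')$ on $\R$ back to the pointwise fiber $\hom^{*,\gp}(F,F')_{(e,c)} = \RHom(F, T_{-c*}F')$: the direct image $\RR a_*\hom^{*,\gp}$ (Proposition~\ref{projhomgroupe}) loses information about the fibre structure over $\gp$, so the reduction to $\R$ is not literal. The natural remedy is to argue directly on $\gp\times\R$ using the microsupport bound $\msupp(\hom^{*,\gp}(F,F'))\subset C_1$ from Lemma~\ref{lem:msuppF2} together with the Step 1 estimate: the vanishing of the conic-conic contribution forces $\hom^{*,\gp}(F,F')|_{\{e\}\times\R}$ to be supported in $]-\infty,l_{max}]$, whence its stalk at $(e,c)$ with $c > l_{max}$ is zero.
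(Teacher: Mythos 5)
Your opening geometric observation is exactly right and is the same key input the paper uses: if $(t_1;\tau_1)$ with $\tau_1\neq 0$ appears in the microsupport after projecting to $\R$, then $\Lambda$ must meet $T_{t_1}(\Lambda)$, i.e.\ $|t_1|$ is a Reeb chord length. However, the way you try to close the argument has a genuine gap. The microsupport estimate tells you that $\hom^{*,\gp}(F,F')|_{\{e\}\times\R}$ is \emph{locally constant} on $\mo]l_{max},+\infty\mc[$ (its $\dot{\mathrm{SS}}$ avoids the fibres over those $t$'s); it does \emph{not} tell you that its stalks there vanish. A sheaf on $\R$ with $\msupp\subset\{\tau\geq0\}$ that vanishes for $t\ll0$ can easily be nonzero and constant for $t\gg0$ (e.g.\ $\ak_{[0,\infty[}$), so ``no conic microsupport beyond $l_{max}$'' does not ``force'' support in $\mo]-\infty,l_{max}]$. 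What you actually derive from the microsupport bound is a reduction: it suffices to prove $\RHom(F,T_{-c*}F')\simeq 0$ for $c\gg 0$. That reduction is the first half of the paper's proof; the second half is a separate computation at $c\gg0$ which is missing from your proposal. Concretely, for $c$ large enough $T_{-c*}F'$ is locally constant on $\supp(F)$, hence of the form $p_\gp^{-1}L'$ there, so $\RHom(F,T_{-c*}F')\simeq\RHom(\RR p_{\gp!}F,L')[-1]$; and $\RR p_{\gp!}F\simeq 0$ because each stalk $\rsect_c(\R;F|_{\{g\}\times\R})$ vanishes (split $F|_{\{g\}\times\R}$ into its part on $\mo]-\infty,A[$, killed by Lemma~\ref{lem:annulationR}, and the constant part on $[A,\infty[$, whose compactly supported cohomology on $\R$ vanishes). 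Without this second half the proof is incomplete.

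A smaller issue with the barcode detour: $\rsect_{\{c\}}\bigl(\hom^*(\RR a_*F,\RR a_*F')\bigr)\simeq\RHom_\R(\RR a_*F,T_{-c*}\RR a_*F')$, which is \emph{not} the same as $\RHom_{\gp\times\R}(F,T_{-c*}F')=\rsect_{\{c\}}\bigl(\RR a_*\hom^*(F,F')\bigr)$ — the direct image and $\RHom$ do not commute. You acknowledge this in your last paragraph, but it means the barcode computation, even granting constructibility (which requires a finite-type hypothesis absent from the lemma's statement), establishes a statement about $\RR a_*F$, not about $F$. The paper sidesteps barcodes entirely and works with $\RR a_*\hom^*(F,F')$ directly, using Lemma~\ref{lem:section_hom_etoile} to identify its costalks with $\RHom(F,T_{-c*}F')$.
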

\begin{proof}
  By Lemma~\ref{lem:section_hom_etoile} we have $\RHom(F, T_{-c*}F') \simeq \rsect_{\{c\}}\RR
  a_*\hom^*(F,F')$. We also have $\RR a_*\hom^*(F,F') \simeq \RR b_* \rhom(q_{2}^{-1} F, s^! G)$,
  where $b = a \circ q_1 \colon \gp\times \R^2 \to \R$, $(g,t_1,t_2) \mapsto t_1$ and $q_1$, $q_2$,
  $s$ are the notations of~\eqref{eq:defhometoile}.  We give a bound for the microsupport of $\RR
  a_*\hom^*(F,F')$.  By~\eqref{eq:borne-msupp} $\msupp(\rhom( q_{2}^{-1} F, s^! G)) \subset A^a+B$,
  where
  \begin{align*}
    A &= \{(g,t_1,t_2; \gamma, 0, \tau); \; (g,t_2;\gamma,\tau) \in \Lambda \}
        \cup 0_{\gp\times\R^2} , \\
    B &= \{(g,t_1,t_2; \gamma', \tau', \tau'); \; 
        (g,t_1+t_2;\gamma',\tau') \in \Lambda \}  \cup 0_{\gp\times\R^2} 
  \end{align*}
  and we are interested in the $(t_1;\tau_1) \in T^*\R$ such that
  $(g,t_1;t_2;0,\tau_1,0) \in A^a+B$ for some $g$, $t_2$.  Thus
  $0 = -\gamma + \gamma'$, $\tau_1 = \tau'$, $0 = -\tau+\tau'$ and, assuming
  $\tau\not=0$, we find $(g,t_2;\gamma,\tau) \in \Lambda$,
  $(g,t_1+t_2;\gamma,\tau) \in \Lambda$.  This implies that $\Lambda$ meets
  $T_{t_1}(\Lambda)$.  Hence $\dmsupp(\RR a_*\hom^*(F,F'))$ does not meet $T^*_c\R$
  when $c\not=0$ or $|c|$ is not a Reeb chord length.
  
  In particular $\RR a_*\hom^*(F,F')$ is constant on $]l_{max},\infty[$ and it is
  enough to check that $\RHom(F, T_{-c*}F') \simeq 0$ for $c\gg0$.

  For $c$ big enough, $T_{-c*}F'$ is locally constant on the support of $F$
  by~\eqref{eq:cond_faiscMR}. Hence we can assume $T_{-c*}F' \simeq p_\gp^{-1}(L')$
  for some $L' \in \Der(\ak_\gp)$, where $p_\gp$ is the projection to $\gp$.  Then
  $\RHom(F, T_{-c*}F') \simeq \RHom(\RR p_{\gp !}F, L')[-1]$.  Now we check that
  $\RR p_{\gp !}F$ vanishes.  By base change
  $(\RR p_{\gp !}F)_g \simeq \rsect_c(\R; F|_{\{g\} \times\R})$ for any $g\in \gp$.
  Since $F^g := F|_{\{g\} \times\R}$ is constant on $]A,\infty[$, $A\gg0$, we have
  $\rsect_c(\R; F^g_{[A,\infty[}) \simeq 0$.  Using the distinguished triangle
  $F^g_{]-\infty,A[} \to F^g \to F^g_{[A,\infty[} \to[+1]$, we obtain
  $(\RR p_{\gp !}F)_g \simeq \rsect(\R; F^g_{]-\infty,A[})$, which vanishes by
  Lemma~\ref{lem:annulationR}.
\end{proof}

\begin{lemma}\label{lem:annulationF2fibree}
  Let $e$ be the neutral element of $\gp$.  For any
  $F,F' \in \Der_{\Lambda,+}(\ak_{\gp\times\R})$ we have
  $\hom^{*,\gp}(F,F')|_{\{e \} \times [l_{max} ,\infty[} \simeq 0$.
\end{lemma}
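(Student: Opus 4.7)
The plan is to restrict $\hom^{*,\gp}(F,F')$ to $\{e\}\times\R$, show the resulting sheaf $H$ on $\R$ lies in $\Der_{\tau\geq 0}(\ak_\R)$, verify that its costalks vanish beyond $l_{max}$, and then invoke Lemma~\ref{lem:cogermeszero}. Write $G := \hom^{*,\gp}(F,F')$ and $i \colon \R \hookrightarrow \gp\times\R$ for the closed embedding $t \mapsto (e,t)$, and set $H := i^{-1}G$. By Lemma~\ref{lem:msuppF2}, $\msupp(G) \subset C_1 = \{\tau \geq \|\gamma\|\}$. The conormal bundle to $\{e\}\times\R$ in $\gp\times\R$ is $T^*_e\gp \times 0_\R$, and the condition $\tau \geq \|\gamma\|$ with $\tau = 0$ forces $\gamma=0$; hence $\dmsupp(G)$ misses this conormal bundle outside the zero section. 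Using~\eqref{eq:borne-msupp} this immediately gives $\msupp(H) \subset \{\tau \geq 0\}$. The same non-characteristic condition, combined with the orientability of $\gp$ and standard microlocal machinery (in the spirit of the results around~\cite[Prop.~5.4.13]{KS90}), yields $i^!G \simeq i^{-1}G[-n]$ with $n = \dim\gp$, so that for every $c \in \R$,
$$
\rsect_{\{(e,c)\}}(\gp\times\R; G) \simeq \rsect_{\{c\}}(\R; H)[-n].
$$

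Next I would compute the left-hand side through the adjunction~\eqref{eq:adjhometoileG}:
$$
\rsect_{\{(e,c)\}}(\gp\times\R; G) \simeq \RHom(\ak_{\{(e,c)\}} *_\gp F, F') .
$$
The restriction of $\mu$ to $\{(e,c)\}\times\gp\times\R$ is the diffeomorphism $(g,t)\mapsto (g, t+c)$, and unraveling the definition of $*_\gp$ shows $\ak_{\{(e,c)\}} *_\gp F \simeq T_{c*}F$. The costalk therefore equals $\RHom(T_{c*}F, F') \simeq \RHom(F, T_{-c*}F')$, which vanishes for $c > l_{max}$ by Lemma~\ref{lem:HomFTcF}.

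Combining the two steps gives $\rsect_{\{c\}}(\R; H) \simeq 0$ for every $c > l_{max}$. The translated sheaf $T_{-l_{max}*}H$ still lies in $\Der_{\tau\geq 0}(\ak_\R)$ and satisfies $\rsect_{\{t\}}(T_{-l_{max}*}H) \simeq \rsect_{\{t+l_{max}\}}(\R; H) \simeq 0$ for every $t > 0$, so Lemma~\ref{lem:cogermeszero} yields $(T_{-l_{max}*}H)|_{[0,\infty[} \simeq 0$, i.e., $H|_{[l_{max},\infty[} \simeq 0$. The most delicate step is the microlocal identification $i^!G \simeq i^{-1}G[-n]$ via the non-characteristic condition; the adjunction computation and the final cogerm invocation are then routine.
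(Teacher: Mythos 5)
Your proposal is correct and follows essentially the same route as the paper's proof: use Lemma~\ref{lem:msuppF2} to get the non-characteristic estimate, invoke~\cite[Prop.~5.4.13]{KS90} to identify $i^!_e$ with $i^{-1}_e[-n]$, compute $i^!_{(e,c)}\hom^{*,\gp}(F,F')$ via the adjunction~\eqref{eq:adjhometoileG} and the identity $\ak_{\{(e,c)\}}*_\gp F\simeq T_{c*}F$, reduce to Lemma~\ref{lem:HomFTcF}, and conclude with Lemma~\ref{lem:cogermeszero}. The only (harmless) cosmetic difference is that you make the translation by $l_{max}$ explicit before applying Lemma~\ref{lem:cogermeszero}, whereas the paper leaves it implicit.
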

\begin{proof}
  We set for short $F_1 = \hom^{*,\gp}(F,F')$.  Let $i_c \colon \{c\} \to \R$,
  $i_e \colon \R \simeq \{e\}\times\R \to \gp\times\R$ be the inclusions and
  $i_{(e,c)} = i_e \circ i_c$.  By Lemma~\ref{lem:msuppF2} $\dmsupp(F_1)$ does not
  meet $T_{\{e\}\times\R}(\gp\times\R)$. Hence
  $\msupp( i^{-1}_eF_1) \subset \{\tau\geq0\}$ and $i^{-1}_eF_1 \simeq i^!_eF_1 [n]$,
  where $n$ is the dimension of $\gp$, by~\cite[Prop.~5.4.13]{KS90}.  By
  Lemma~\ref{lem:cogermeszero} it is thus enough to prove that
  $i^!_c(i^{-1}_eF_1 ) \simeq i_{(e,c)}^! F_1 [n]$ vanishes for all $c> l_{max}$.

  By the adjunction~\eqref{eq:adjhometoileG} we have
  \begin{align*}
    i_{(e,c)}^! \hom^{*,\gp}(F,F')
    &\simeq \RHom(\ak_{(e,c)} ,  \hom^{*,\gp}(F,F')) \\
    &\simeq \RHom(\ak_{(e,c)} *_\gp F, F') \\
    &\simeq \RHom( T_{c*}F, F') \\
    &\simeq \RHom(F, T_{-c*}F') .
  \end{align*}  
Now the result follows from Lemma~\ref{lem:HomFTcF}.
\end{proof}

Until the end of the section we set for short
$$
F_2 = \hom^{*,\gp}(F,F')  \otimes \ak_{\gp \times [0,\infty[} .
$$
By Lemma~\ref{lem:annulationF2fibree} we thus have $\tau_c(F_2|_{\{e\}\times\R}) = 0$ for $c\geq
l_{max}$.

\begin{proposition}\label{prop:annultaugroupe0}
{\rm(i)}  For any $g\in\gp$ the morphism $\tau_c(F_2|_{\{g\} \times \R})$ vanishes for
  $c> 2 d(e,g) +l_{max}$, where $d(-,-)$ is the distance on $\gp$.

\noindent{\rm(ii)}  
Let $l$ be the diameter of $\gp$.  Let $\varepsilon>0$ be less than the injectivity
radius of $\gp$ and let $B \subset \gp$ be a ball of radius $<\varepsilon$. Then the
morphism $\tau_c(F_2|_{B \times \R})$ vanishes for
$c> 2(l+\varepsilon) + l_{max}$.
\end{proposition}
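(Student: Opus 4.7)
The plan is to deduce both parts from iterated applications of Corollary~\ref{cor:annulationtauc}, with Lemma~\ref{lem:annulationF2fibree} serving as the seed of the iteration. The microsupport hypothesis $\msupp(F_2) \subset C_1$ needed to invoke Corollary~\ref{cor:annulationtauc} comes from Lemma~\ref{lem:msuppF2}; the cutoff factor $\ak_{\gp \times [0,\infty[}$ does not obstruct the propagation argument, because the auxiliary sheaf $\hom^*(F_2,F_2)[-1]$ that actually drives the proof of Corollary~\ref{cor:annulationtauc} still has microsupport in $C_{1/2}$ on the region $\{t \leq 0\}$ relevant to the Morse estimate of Proposition~\ref{prop:annulation_section}.

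For part~(i), I would fix $g \in \gp$, choose a minimizing geodesic $\gamma \colon [0,L] \to \gp$ from $e$ to $g$ with $L = d(e,g)$, and, for an arbitrary $\delta>0$, subdivide $\gamma$ into $N$ equal segments with $L/N$ strictly smaller than the injectivity radius. Writing $g_k = \gamma(kL/N)$, Lemma~\ref{lem:annulationF2fibree} furnishes $\tau_c(F_2|_{\{g_0\}\times\R})=0$ for $c > l_{max}$. Applying Corollary~\ref{cor:annulationtauc} at $g_{k-1}$ with a ball of radius slightly larger than $L/N$ (which still lies inside the injectivity radius) propagates the vanishing throughout that ball, in particular to $g_k$, while enlarging the torsion constant by approximately $2L/N$. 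After $N$ iterations this yields $\tau_c(F_2|_{\{g\}\times\R})=0$ for $c > l_{max} + 2L + \delta$, and letting $\delta \to 0$ produces the stated bound $c > 2d(e,g) + l_{max}$.

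Part~(ii) is then immediate. Let $g$ denote the center of $B$ and let $R<\varepsilon$ be its radius. By part~(i) and $d(e,g) \leq l$, the morphism $\tau_c(F_2|_{\{g\}\times\R})$ vanishes for $c > 2l + l_{max}$. One further application of Corollary~\ref{cor:annulationtauc} at $g$ with a ball of radius $R$ (using $\alpha=1$, so the constant grows by $2R$) delivers $\tau_{c'}(F_2|_{B \times \R})=0$ for $c' > 2l + l_{max} + 2R$, hence for $c' > 2(l+\varepsilon) + l_{max}$ as $R \to \varepsilon$.

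The main obstacle will be the bookkeeping in the iterative step of part~(i): one must track the accumulation of the additive errors from each application of Corollary~\ref{cor:annulationtauc} and confirm that, in the limit of fine subdivision, they telescope to exactly $2L$ rather than a larger constant. The strict inequality $c' > c + 2\alpha^{-1}R$ of Corollary~\ref{cor:annulationtauc} naturally accommodates an auxiliary slack $\delta/N$ at each step summing to $\delta$, but this must be threaded cleanly through the induction.
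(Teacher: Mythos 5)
Your proposal is correct and follows essentially the same approach as the paper: a geodesic chain from $e$ to $g$, seeded by Lemma~\ref{lem:annulationF2fibree} at $g_0=e$ and propagated step by step via Corollary~\ref{cor:annulationtauc} (with $\alpha=1$ from Lemma~\ref{lem:msuppF2}), with the slack $\delta\to 0$ at the end; part~(ii) by applying part~(i) at the center of $B$ and one further propagation. One minor remark: your aside about the cutoff factor $\ak_{\gp\times[0,\infty[}$ is addressed more directly by observing that $C_1$ is a convex cone and $\msupp(\ak_{\gp\times[0,\infty[})\subset C_1$, so by~\eqref{eq:borne-msupp} the tensor product $F_2$ itself has $\msupp(F_2)\subset C_1$ and Corollary~\ref{cor:annulationtauc} applies to $F_2$ directly, with no need to restrict attention to the region $\{t\le 0\}$.
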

\begin{proof}
  (i) We choose a sequence of points $g_0,\dots,g_n$ on a geodesic in $\gp$ such that
  $g_0 = e$, $g_n=g$, $l_i := d(g_i, g_{i+1})$ is less than the injectivity radius
  and $\sum_{i=0}^{n-1} l_i = d(e,g)$.  We choose $\delta >0$ and set
  $c_i = l_{max} + 2 \sum_{j=0}^i (l_j+\delta)$.  Using
  Corollary~\ref{cor:annulationtauc} we see by induction on $i$ that
  $\tau_{c_i}(F_2|_{B(g_i,l_i+\delta)\times\R})$ vanishes and hence
  $\tau_{c_i}(F_2|_{\{g_{i+1}\}\times\R})$ also vanishes: the initial step is given
  by Lemma~\ref{lem:annulationF2fibree} and, by Lemma~\ref{lem:msuppF2}, the
  coefficient $\alpha$ in the corollary is $1$.  Since $\delta$ is arbitrary, the
  result follows.

  \smallskip\noindent(ii) Let $g$ be the center of $B$. By~(i)
  $\tau_c(F_2|_{\{g\} \times \R})$ vanishes for $c>2l + l_{max}$ and the result
  follows by Corollary~\ref{cor:annulationtauc} again.
\end{proof}

\begin{corollary}\label{cor:annulationtaucF2}
  The morphism $\tau_c(F_2)$ vanishes for $c> (n+1)(2l + l_{max})$, where
  $l$ is the diameter of $\gp$ and $n$ its dimension.
\end{corollary}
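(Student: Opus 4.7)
The plan is to combine Proposition~\ref{prop:annultaugroupe0}(ii) with a Mayer--Vietoris dévissage along an open cover of $\gp$ by $n+1$ pieces, each a disjoint union of small balls. Fix $\varepsilon>0$ less than the injectivity radius of $\gp$ and some auxiliary $\delta>0$, and set $c_0 := 2(l+\varepsilon)+l_{max}+\delta$. First I would invoke the standard fact that a compact $n$-manifold admits an open cover $\gp = V_0 \cup V_1 \cup \cdots \cup V_n$ in which each $V_k$ is a disjoint union of open balls of radius $<\varepsilon$: take a sufficiently fine triangulation of $\gp$ and let $V_k$ be the disjoint union of the open stars, in the barycentric subdivision, of the barycenters of the $k$-simplices. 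By Proposition~\ref{prop:annultaugroupe0}(ii) applied to each component, $\tau_{c_0}(F_2|_{V_k\times\R})=0$ for every $k$; the same vanishing holds for $F_2|_{U\times\R}$ for any open $U\subset V_k$, since restriction to an open subset preserves $\tau_{c_0}=0$.

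I would next transfer this from restrictions to the extensions by zero $F_2^U := F_2\otimes \ak_{U\times\R}$, viewed as sheaves on $\gp\times\R$. For an open inclusion $j_+\colon U\times\R\hookrightarrow \gp\times\R$ the functor $j_{+!}$ commutes with both $P$ and $T_{c*}$ (the convolution along $\R$ is fiberwise in the $\gp$ direction), hence commutes with the natural transformation $\tau_c$, so $\tau_{c_0}(F_2^U)=0$ whenever $\tau_{c_0}(F_2|_{U\times\R})=0$. Note that all sheaves $F_2^U$ belong to $\Der_{\tau\geq 0}(\ak_{\gp\times\R})$ by Lemma~\ref{lem:msuppF2} and the microsupport estimate $\msupp(\ak_{U\times\R})\subset\{\tau=0\}$. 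Set $W_k := V_0\cup\cdots\cup V_k$, so that $W_n=\gp$ and $F_2^{W_n}=F_2$; I would then prove by induction on $k\in\{0,\dots,n\}$ that $\tau_{(k+1)c_0}(F_2^{W_k})=0$.

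The base case $k=0$ is the vanishing $\tau_{c_0}(F_2^{V_0})=0$ just established. For the inductive step, tensor $F_2$ with the Mayer--Vietoris distinguished triangle
\begin{equation*}
\ak_{(W_k\cap V_{k+1})\times\R} \to \ak_{W_k\times\R}\oplus \ak_{V_{k+1}\times\R} \to \ak_{W_{k+1}\times\R} \to[+1]
\end{equation*}
and rotate so that $F_2^{W_{k+1}}$ sits in the middle position:
\begin{equation*}
F_2^{W_k}\oplus F_2^{V_{k+1}} \to F_2^{W_{k+1}} \to F_2^{W_k\cap V_{k+1}}[1] \to[+1].
\end{equation*}
The induction hypothesis together with $\tau_{c_0}(F_2^{V_{k+1}})=0$ (and the trivial observation from the proof of Lemma~\ref{lem:taucducone} that $\tau_a=0$ implies $\tau_{a'}=0$ for $a'\geq a$) gives $\tau_{(k+1)c_0}$-vanishing of the first term; and $\tau_{c_0}(F_2^{W_k\cap V_{k+1}}[1])=0$ since $W_k\cap V_{k+1}$ is a union of open subsets of small balls of $V_{k+1}$. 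Lemma~\ref{lem:taucducone} then yields $\tau_{(k+2)c_0}(F_2^{W_{k+1}})=0$. Taking $k=n$ produces $\tau_{(n+1)c_0}(F_2)=0$, and letting $\varepsilon,\delta\to 0$ gives $\tau_c(F_2)=0$ for every $c>(n+1)(2l+l_{max})$.

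The step I expect to require the most care is the compatibility between $\tau_c$ and the passage from the restriction $F_2|_{U\times\R}$ (on which Proposition~\ref{prop:annultaugroupe0}(ii) is phrased) to the extension by zero $F_2\otimes \ak_{U\times\R}$ in $\Der(\ak_{\gp\times\R})$, which is what the Mayer--Vietoris triangle actually provides; once that compatibility (the commutation $j_{+!}\circ \tau_c = \tau_c\circ j_{+!}$) is pinned down, the covering by $n+1$ disjoint unions of small balls and the inductive bookkeeping are routine.
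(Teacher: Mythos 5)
Your argument is correct and reaches the same bound $(n+1)(2l+l_{max})$, but it takes a genuinely different route from the paper. The paper filters $\gp$ by the skeleta $S_0\subset S_1\subset\cdots\subset S_n$ of a fine triangulation, passes from restrictions to the locally-closed extensions $(F_2)_{\sigma^+}$ via the adjunction $\Hom(F_2|_{\sigma^+},T_{c*}(F_2|_{\sigma^+}))\simeq\Hom((F_2)_{\sigma^+},T_{c*}((F_2)_{\sigma^+}))$, and runs the $n+1$-step induction on the excision triangles $\bigoplus_{\sigma\in\Sigma_k}(F_2)_{\sigma^+}\to(F_2)_{S_k^+}\to(F_2)_{S_{k-1}^+}\to[+1]$, applying Lemma~\ref{lem:taucducone} at each step. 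You instead produce an open cover $\gp=V_0\cup\cdots\cup V_n$ of multiplicity $n+1$ (disjoint unions of open stars of the barycenters of $k$-simplices in the barycentric subdivision), work with the extensions by zero $F_2\otimes\ak_{U\times\R}$ from open subsets, and dévissage via rotated Mayer--Vietoris triangles. Both exploit the same three inputs (Proposition~\ref{prop:annultaugroupe0}(ii), Lemma~\ref{lem:taucducone}, and a triangulation reflecting $\dim\gp=n$), so the bookkeeping and the resulting constant are identical. The excision route is slightly leaner because the skeleta give the filtration directly and one never has to exhibit an open cover of controlled multiplicity; your route stays entirely within open subsets and avoids the locally-closed-support adjunction, at the price of the barycentric-star construction and of tracking microsupport estimates to keep $F_2\otimes\ak_{U\times\R}$ in $\Der_{\tau\geq 0}$. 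One small imprecision: the components of your $V_k$ are open stars, not balls, but since each star is an open subset of a ball of radius $<\varepsilon$ and restriction to an open subset preserves $\tau_{c_0}=0$, this does not affect the argument (and you already invoke exactly this observation).
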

\begin{proof}
  Let $\varepsilon>0$ be given and $c = 2(l+\varepsilon) + l_{max}$.  We
  choose a triangulation of $\gp$ such that all simplices are contained in balls of
  radius less than $\varepsilon$.  We denote by $\Sigma_k$ the set of simplices of
  dimension $k$ and by $S_k$ the union of the simplices of dimension $\leq k$.  For a
  subset $S$ of $\gp$ we set for short $S^+ = S\times\R$.
  
  By Proposition~\ref{prop:annultaugroupe0}, for any simplex $\sigma$, the morphism
  $\tau_c(F_2|_{\sigma^+})$ vanishes.  We remark that
  $$
  \Hom(F_2|_{\sigma^+}, T_{c*}(F_2|_{\sigma^+})) \simeq \Hom( (F_2)_{\sigma^+},
  T_{c*}((F_2)_{\sigma^+})).
  $$
  Hence $\tau_c((F_2)_{\sigma^+})$ vanishes.  Let us prove that
  $\tau_{(k+1)c}((F_2)_{S_k^+})$ vanishes, by induction of $k$.  For $k=0$, this
  follows from Proposition~\ref{prop:annultaugroupe0}.  The induction step follows
  from Lemma~\ref{lem:taucducone} applied to the excision distinguished triangle
  $\bigoplus_{\sigma \in \Sigma_k} (F_2)_{\sigma^+} \to (F_2)_{S_k^+} \to
  (F_2)_{S_{k-1}^+} \to[+1]$.
  
For $k=n$ we obtain the vanishing of $\tau_{(n+1)c}(F_2)$. Since
$\varepsilon$ is as small as required, this gives the result.
\end{proof}

\begin{proof}[Proof of Theorem~\ref{thm:groupe}]
  By definition $v(F,F') = \min\{c;\; \tau_c(F_3)=0 \}$, where
  $F_3 = \hom^*(\RR a_*F, \RR a_*F') \otimes \ak_{[0,\infty[}$.  Using
  Proposition~\ref{projhomgroupe} and the projection formula we have
  $F_3 \simeq \RR a_* F_2$. By Corollary~\ref{cor:annulationtaucF2} we deduce
  $\tau_c(F_3) =0$ when $c> (n+1)(2l + l_{max})$, which proves the
  theorem.
\end{proof}

\section{Proof of Corollary~\ref{cor:esphomogsimplconnexe} -- homogeneous spaces}
\label{sec:esp_homog}

In this section we assume that $M = \gp/\sgp$ where $\gp$ and $\sgp$ are compact Lie groups.

  \sui(i) We recall that $\Der_{\Lambda,+}(\ak_{M\times\R}) \to \Dlc(\ak_M)$,
  $F \mapsto F_+$, is an equivalence (see Remark~\ref{lem:equiv-cat}).  More
  precisely there exists a unique $F_0 \in \Der_{\Lambda,+}(\ak_{M\times\R})$ such
  that $(F_0)_+ \simeq \ak_M$ and, for any $F \in \Der_{\Lambda,+}(\ak_{M\times\R})$
  we have $F \simeq F_0 \otimes p^{-1} F_+$, where $p\colon M\times\R \to M$ is the
  projection.

  \sui(ii) Let $q \colon \gp\times\R \to \gp/\sgp\times\R = M\times\R$ be the
  quotient map.  The projection formula gives
  $\RR q_!(q^{-1}F) \simeq \RR q_!(q^{-1}F \otimes \ak_{\gp\times\R}) \simeq F \otimes
  L$, where $L = \RR q_!(\ak_{\gp\times\R})$.  By Theorem~\ref{thm:groupe} and
  Remark~\ref{rem:imdir_somme_v} we have, for any
  $F, F' \in \Der_{\Lambda,+}(\ak_{M\times\R})$,
  \begin{equation}\label{eq:borne-groupe}
    v(F \otimes L, F'\otimes L) = v(q^{-1}F, q^{-1}F') \leq C,
    \qquad \text{ where $C:= (n+1)(2l + l_{max})$.}
\end{equation}

  \sui(iii) We first assume that $\sgp$ is connected.  We set
  $$
  \calF_{L,\Lambda} = \{F \otimes L; \; F \in \Der_{\Lambda,+}(\ak_{M\times\R})\},
  \qquad
  \calF_{L,\lc} = \{ G \otimes L; \; G \in \Dlc(\ak_{M\times\R})\} .
  $$  
  By~(i) we have $\calF_{L,\Lambda} = \calF_{L,\lc} \otimes F_0$, using the notation
  of Lemma~\ref{lem:coneiteres0}.  Since $\sgp$ is connected, we have
  $H^0L \simeq \ak_{M\times\R}$.  By Lemma~\ref{lem:coneiteres1} there exists
  $L' \in \calF_{L,\lc}^{\lceil (m-1)/2 \rceil}$ such that $\ak_{M\times\R}$ is a
  direct summand of $L'$ (locally constant sheaves on $M\times\R$ are pull-back of
  sheaves on $M$ and we can consider we work on $M$, which is of dimension $m$).  By
  Lemma~\ref{lem:coneiteres0} it follows that there exists
  $F' \in \calF_{L,\Lambda}^{\lceil (m-1)/2 \rceil}$ such that $F_0$ is a direct
  summand of $F'$.  For $F \in \Der_{\Lambda,+}(\ak_{M\times\R})$ we can write
  $F \simeq F_0 \otimes G$, for some $G\in \Dlc(\ak_{M\times\R})$.  Then $F$ is a
  direct summand of an object of
  $(\calF_{L,\Lambda} \otimes G)^{\lceil (m-1)/2 \rceil}$ by
  Lemma~\ref{lem:coneiteres0} again.  We remark that
  $\calF_{L,\Lambda} \otimes G \subset \calF_{L,\Lambda}$.

  \sui(iv) By~(ii) we have $v(F, F') \leq C$ for any $F , F'\in \calF_{L,\Lambda}$.
  Let us prove by induction on $i$ that, for any $j\leq i$ and
  $F_i \in \calF_{L,\Lambda}^{(i)}$, $F_j \in \calF_{L,\Lambda}^{(j)}$, we have
  $v(F_i,F_j) \leq C (i+1)(j+1)$, $v(F_j,F_i) \leq C (i+1)(j+1)$.

  For $i=0$ this is known. Let us assume it holds for $i$ and let us pick
  $F \in \calF_{L,\Lambda}^{(i+1)}$ given by a distinguished triangle
  $F_i \to F_0 \to F \to[+1]$ with $F_i \in \calF_{L,\Lambda}^{(i)}$,
  $F_0 \in \calF_{L,\Lambda}^{(0)}$.  Now we pick $F' \in \calF_{L,\Lambda}^{(j)}$
  with $j\leq i+1$.

  If $j\leq i$ the induction hypothesis and Lemma~\ref{lem:v_cone} give
  $v(F,F') \leq v(F_i,F') + v(F_0,F') \leq C (i+2)(j+1)$, as required.  Similarly for
  $v(F',F)$.  These inequalities holds for any $F \in \calF_{L,\Lambda}^{(i+1)}$ and
  $F' \in \calF_{L,\Lambda}^{(i)}$.  Hence, for $F' \in \calF_{L,\Lambda}^{(i+1)}$ we
  obtain $v(F_i,F') \leq C (i+1)(i+2)$ and $v(F_0,F') \leq C (i+2)$.  Now we can
  again apply Lemma~\ref{lem:v_cone} to obtain $v(F,F')$ and $v(F',F)$ in the case
  $j=i+1$.

  \smallskip By~(iii) we deduce
  $v(F,F') \leq C (\lceil (m-1)/2 \rceil+1)^2 \leq \frac14 C (m+3)^2$ for any
  $F, F' \in \Der_{\Lambda,+}(\ak_{M\times\R})$, which proves the corollary in the
  case $\sgp$ connected.

  \sui(v) If $\sgp$ is not connected, we let $\sgp^0$ be its neutral component and we
  set $M' = \gp/\sgp^0$.  Then $r\colon M' \to M$ is a finite cover with group
  $\pi_0(\sgp)$.  As in~(i) we have $r_!(r^{-1}F) \simeq F \otimes L_0$, where
  $L_0 = r_!(\ak_{M'})$. We remark that $r_! \simeq r_*$ is exact and
  $r^{-1} \simeq r^!$ because $r$ is a cover map with finite fibers.  We deduce the
  adjunction morphisms $\alpha\colon \ak_M \to L_0$ and $\beta \colon L_0 \to \ak_M$.
  The composition $\beta \circ \alpha$ is the multiplication by $|\pi_0(\sgp)|$.  By
  the hypothesis on the characteristic we deduce that $\ak_M$ is a direct summand of
  $L_0$. Now the result follows from~(iv) applied to $r^{-1}F$, $r^{-1}F'$ and
  Remark~\ref{rem:imdir_somme_v}.

\section{Appendix}

Let $\calT$ be a triangulated category.  For a family $\calF$ of objects of $\calT$
we set
$$
\calF^{(0)} = \{ F \in \calT;\; F \simeq \bigoplus_{i=1}^nF_i[d_i]
\text{ for some } F_i \in \calF, \, d_i\in \Z,\, i=1,\dots,n \}
$$
and we define $\calF^{(k)}$ inductively by
\begin{align*}
  \calF^{(k+1)} &= \{ F \in \calT;\; \text{there exists a distinguished triangle}\\
                & \hspace{3cm}  \text{$F_1 \to F_0 \to F \to[+1]$ with
                  $F_1 \in \calF^{(k)}$ and  $F_0 \in \calF^{(0)}$} \} .
\end{align*}
We remark that $(\calF^{(k)})^{(0)} = \calF^{(k)}$.

\begin{lemma}\label{lem:coneiteres0}
  Let $\calF$ be a family of objects of $\Der(\ak_M)$ and let $G \in \Der(\ak_M)$. We
  set $\calF \otimes G = \{ F\otimes G$; $F \in \calF\}$.  Then, for any
  $F' \in \calF^{(i)}$, we have $F'\otimes G \in (\calF\otimes G)^{(i)}$.
\end{lemma}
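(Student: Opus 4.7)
The plan is a straightforward induction on $i$, exploiting the fact that the derived tensor product $- \otimes G$ is a triangulated endofunctor of $\Der(\ak_M)$, so it preserves shifts, finite direct sums, and distinguished triangles.

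For the base case $i=0$, I would take $F' \in \calF^{(0)}$, write $F' \simeq \bigoplus_{j=1}^n F_j[d_j]$ with $F_j \in \calF$, and observe that
\[
F' \otimes G \simeq \bigoplus_{j=1}^n (F_j \otimes G)[d_j],
\]
which is exactly an object of $(\calF \otimes G)^{(0)}$ since each $F_j \otimes G$ lies in $\calF \otimes G$ by definition.

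For the inductive step, assuming the statement for $i$, I would pick $F' \in \calF^{(i+1)}$ witnessed by a distinguished triangle $F_1 \to F_0 \to F' \xrightarrow{+1}$ with $F_1 \in \calF^{(i)}$ and $F_0 \in \calF^{(0)}$. Applying $- \otimes G$ yields the distinguished triangle $F_1 \otimes G \to F_0 \otimes G \to F' \otimes G \xrightarrow{+1}$. The induction hypothesis gives $F_1 \otimes G \in (\calF \otimes G)^{(i)}$ and the base case gives $F_0 \otimes G \in (\calF \otimes G)^{(0)}$, so by the definition of $(\calF \otimes G)^{(i+1)}$ we conclude $F' \otimes G \in (\calF \otimes G)^{(i+1)}$.

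There is no real obstacle here; the only point one has to be slightly careful about is that $- \otimes G$ really does send distinguished triangles to distinguished triangles, which holds because we are using the derived tensor product (and the coefficient ring is a field, so everything is flat anyway). This is a purely formal bookkeeping lemma used to propagate the iterated-cone structure through a tensor with an auxiliary object, as needed in step (iii) of the proof of Corollary~\ref{cor:esphomogsimplconnexe}.
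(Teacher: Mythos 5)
Your proof is correct and follows essentially the same route as the paper: induction on $i$, using that $-\otimes G$ is a triangulated functor and hence preserves shifts, finite direct sums, and distinguished triangles. You spell out the base case $i=0$ a bit more explicitly than the paper, which folds it into the inductive step, but the argument is identical in substance.
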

\begin{proof}
  We make an induction on $i$.  By definition any $F' \in \calF^{(i)}$ fits in a
  distinguished triangle $F_1 \to F_0 \to F' \to[+1]$ with $F_1 \in \calF^{(i-1)}$,
  $ F_0 \in \calF^{(0)}$.  Tensoring with $G$ we obtain
  $F_1\otimes G \to F_0\otimes G \to F'\otimes G \to[+1]$ where
  $F_1\otimes G \in (\calF\otimes G)^{(i-1)}$ by the induction hypothesis and
  $F_0\otimes G \in (\calF\otimes G)^{(0)}$. Hence
  $F'\otimes G \in (\calF\otimes G)^{(i)}$.
\end{proof}

We let $\Dlc(\ak_M)$ be the subcategory of $\Der(\ak_M)$ formed by the $G$ such that
$H^iG$ is a locally constant sheaf, for any $i\in\Z$.

\begin{lemma}\label{lem:coneiteres1}
  Let $n$ be the dimension of $M$.  Let $L \in \Dlc(\ak_M)$ be such that
  $H^iL \simeq 0$ for $i<0$ and $H^0L \simeq \ak_M$.  We set $\calF = \{F$;
  $F \simeq L \otimes G$ for some $G \in \Dlc(\ak_M)\}$.  Then there exists
  $F \in \calF^{\lceil (n-1)/2 \rceil}$ such that $\ak_M$ is a direct summand of $F$.
\end{lemma}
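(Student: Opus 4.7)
The starting input is the canonical morphism $\ak_M \to L$ arising from $H^0 L \simeq \ak_M$ together with $H^{<0} L = 0$, i.e.\ the inclusion of the bottom truncation $\tau^{\leq 0} L \simeq \ak_M$ into $L$.  On any simply connected open $U \subset M$, the derived category $\Dlc(\ak_U)$ is equivalent to $\Der(\ak)$, hence $L|_U$ splits non-canonically as $\ak_U \oplus (L|_U)'$ and the local morphism $\ak_U \hookrightarrow L|_U$ admits a retraction.  The lemma therefore amounts to globalizing this local splitting to a direct summand of an iterated cone in the tower $\calF^{(\bullet)}$.

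My plan is to proceed by induction on $n$, using a decomposition of $M$ into pieces of smaller effective dimension --- for instance, a handle decomposition of $M$ or a good cover whose nerve has controlled dimension --- and assembling the local splittings via Mayer--Vietoris-type triangles.  At each assembly step, one such triangle contributes a single cone of an element of $\calF^{(0)}$ against the previous stage, moving us up exactly one level in $\calF^{(\bullet)}$.  The bound $\lceil (n-1)/2 \rceil$ strongly suggests a recursion of the shape $k(n) = k(n-2) + 1$ with $k(0) = k(1) = 0$, which is naturally produced by a Poincar\'e duality-type pairing: in a handle decomposition of a compact $n$-manifold, the $0$-handle and $n$-handle split off freely, while the intermediate handles are organized into $\lceil (n-1)/2 \rceil$ complementary pairs $(k, n-k)$, each of which should be absorbed into a single cone step.

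The critical and least-obvious point will be this halving.  A naive handle-by-handle argument would yield only the weaker bound $n$, and the improvement to $\lceil (n-1)/2 \rceil$ requires showing that one cone step in $\calF^{(\bullet)}$ suffices to combine a pair of Poincar\'e-dual handles rather than two steps.  Choosing the local splittings consistently across $U$ and its dual patches so that this pairing becomes available at the chain level is the main technical hurdle.  Additional bookkeeping points are preserving the \emph{direct summand} property (as opposed to merely the existence of a map $\ak_M \to F$) through each stage, via idempotents that glue to a global idempotent, and treating the self-dual middle dimension when $n$ is even without an extra cone step.
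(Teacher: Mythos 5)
Your proposal is a plan, not a proof: you explicitly defer the central point (``the critical and least-obvious point will be this halving\ldots is the main technical hurdle'') and two further bookkeeping issues (gluing idempotents, the self-dual middle handle). As it stands there is a genuine gap, and in fact the intended geometric route faces a structural obstruction that you do not address. The class $\calF^{(\bullet)}$ is built from objects $L\otimes G$ with $G\in\Dlc(\ak_M)$, i.e.\ $G$ locally constant on \emph{all} of $M$. A Mayer--Vietoris or handle-attachment triangle replaces a sheaf by pieces of the form $j_{U*}(F|_U)$ or $F_U$ for an open or locally closed $U\subsetneq M$; these are not locally constant on $M$, so such triangles immediately leave $\calF^{(\bullet)}$. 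You would need a completely different mechanism to stay inside the allowed class, and nothing in the sketch supplies one. The Poincar\'e-duality intuition for the constant $\lceil(n-1)/2\rceil$ is also not where that number comes from.

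For comparison, the paper's argument is purely homological and never leaves $\calF^{(\bullet)}$. One inductively builds $L_i\in\calF^{(i)}$ with $H^0L_i\simeq\ak_M$, $L_i\in\Der^{\geq 0}$, and $H^jL_i=0$ for $1\leq j\leq 2i$. The inductive step takes the two truncation triangles $\ak_M\to L\to L'$ and $\ak_M\to L_i\to L_i'$ (with $L'=\tau_{\geq1}L$, $L_i'=\tau_{\geq1}L_i$), forms the cone $G$ of the composite $L_i\to L\otimes L_i'$ (note $L\otimes L_i'\in\calF$, since $L_i'\in\Dlc$), and observes via the octahedron axiom that $G$ sits in a triangle $\ak_M[1]\to G\to L'\otimes L_i'$. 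Since $L'\in\Der^{\geq1}$ and $L_i'\in\Der^{\geq2i+1}$, their tensor product over the field $\ak$ lies in $\Der^{\geq2i+2}$, so $L_{i+1}:=G[-1]$ is $(2i+2)$-lacunary --- this \emph{doubling} of the lacunarity via the K\"unneth formula is the actual source of the factor $2$. Finally, once $2i\geq n-1$, the extension class $u\colon \tau_{\geq1}L_i[-1]\to\ak_M$ has source in $\Der^{\geq n+1}$; since $\ak$ is a field so that flabby sheaves are injective, $\ak_M$ has an injective resolution of length $n=\dim M$, forcing $u=0$ and giving the splitting $\ak_M\oplus\tau_{\geq1}L_i\simeq L_i$. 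So the ``dimension $n$'' enters through the flabby/injective dimension of $M$, not through a handle decomposition, and the splitting is automatic once the gap in cohomological degrees is wide enough --- there is no idempotent-gluing step.
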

\begin{proof}
  (i) For an integer $i$ we let $\Der^{\geq i}(\ak_M)$ be the subcategory formed by
  the $G$ such that $H^jG \simeq 0$ for $j<i$.  We say that
  $G \in \Der^{\geq 0}(\ak_M)$ is $l$-lacunary if $H^jG \simeq 0$ for $j=1,\dots,l$
  (any object of $\Der^{\geq 0}(\ak_M)$ is $0$-lacunary).  We recall the truncation
  distinguished triangle, for any $G \in \Der(\ak_M)$ and $i\in \Z$ (see for
  example~\cite[(1.7.2)]{KS90})
$$
\tau_{\leq i}G \to G \to \tau_{\geq i+1}G \to[+1] .
$$

\sui(ii) We first prove: let $F \in \Der^{\geq 0}(\ak_M)$ be $(n-1)$-lacunary and
such that $H^0F \simeq \ak_M$. Then $\ak_M$ is a direct summand of $F$.  Indeed we
have a distinguished triangle $\ak_M \to F \to F' \to[u] \ak_M[1]$ where
$F' = \tau_{\geq 1}F$. Since $F$ is $(n-1)$-lacunary, we have
$F' \in \Der^{\geq n}(\ak_M)$. We can see $u$ as a morphism from $F'[-1]$ to $\ak_M$.
Now $\ak_M$ has a flabby resolution of length $n$ and, since $\ak$ is a field, flabby
sheaves are injective (see~\cite{KS90} Ex.~II.9 and~II.10).  Hence we can compute $u$
by replacing $\ak_M$ with a complex in degrees $0,\dots,n$.  Since
$F'[-1] \in \Der^{\geq n+1}(\ak_M)$ we find $u=0$.  Hence $F \simeq \ak_M \oplus F'$,
as required.
  
  \sui(iii) We prove by induction on $i$ that there exists $L_i\in \calF^{(i)}$ such
  that $L_i \in \Der^{\geq 0}(\ak_M)$, $H^0L_i \simeq \ak_M$ and $L_i$ is
  $2i$-lacunary.  By~(ii) applied to $F=L_{\lceil (n-1)/2 \rceil}$ this proves the
  lemma.

  For $i=0$ we take $L_0 = L$.

  We assume that step $i$ is proved and we pick $L_i$ as above. We have the
  distinguished triangles
  \begin{align}
    \label{eq:coneiteres1}
    & \ak_M \to L \to L' \to[+1] , \\
    \label{eq:coneiteres2}
    & \ak_M \to L_i \to[\alpha] L'_i \to[+1] ,
  \end{align}
  where $L' = \tau_{\geq 1} L$ and $L'_i = \tau_{\geq 1} L_i$.
  Tensoring~\eqref{eq:coneiteres1} with $L'_i$ we obtain
  \begin{equation}
    \label{eq:coneiteres3}
   L'_i  \to[\beta] L\otimes L'_i \to L'\otimes L'_i \to[+1] 
  \end{equation}
  and we define $G$ by the distinguished triangle
  \begin{equation}
    \label{eq:coneiteres4}
   L_i  \to[\beta \circ \alpha] L\otimes L'_i \to G \to[+1] .
 \end{equation}
 We remark that $L'_i \in \Dlc(\ak_M)$, hence $L\otimes L'_i \in \calF$ and
 $G \in \calF^{(i+1)}$.  The octahedron axiom applied
 to~\eqref{eq:coneiteres2}-\eqref{eq:coneiteres4} gives the distinguished triangle
  \begin{equation}
    \label{eq:coneiteres5}
   \ak_M[1]  \to G \to  L'\otimes L'_i \to[+1] .
 \end{equation}
 We have $L' \in \Der^{\geq 1}(\ak_M)$ by definition and
 $L'_i \in \Der^{\geq 2i+1}(\ak_M)$ because $L_i$ is $2i$-lacunary.  Hence
 $L'\otimes L'_i \in \Der^{\geq 2i+2}(\ak_M)$ and
 $L'\otimes L'_i [-1] \in \Der^{\geq 2i+3}(\ak_M)$.  Now it follows
 from~\eqref{eq:coneiteres5} that $L_{i+1} = G[-1]$ satisfies the required
 properties.
\end{proof}

\providecommand{\bysame}{\leavevmode\hbox to3em{\hrulefill}\thinspace}

\end{document}